\newcommand{\myemph}[1]{\emph{#1}}    % Produces boldface text
\newcommand{\forces}{\Vdash}
\newcommand{\app}{\texttt{app}} 
\newcommand{\pair}{\texttt{pair}} 
\newcommand{\Jelim}[1]{\textnormal{\texttt{J}}_{#1}}
\newcommand{\id}[1]{\texttt{Id}_{#1}} 
\newcommand{\idn}[2]{\underline{#1}^{#2}}
\newcommand{\judge}[3][]{#2\;\vdash_{#1}\;#3}
\newcommand{\expand}[3][]{#2\;\vartriangleright_{#1}\;#3}
\newcommand{\zero}{\mathtt{0}}
\newcommand{\rr}{\mathtt{r}}
\newcommand{\successor}{\mathtt{S}}
\newcommand{\rec}{\texttt{rec}}
\newcommand{\RSigma}{\texttt{R}}
\newcommand{\rgrph}{\textnormal{\textbf{Graph}}}
\newcommand{\sets}{\textnormal{\textbf{Set}}}
\newcommand{\groupoids}{\textnormal{\textbf{Gpd}}}
\newtheorem{theorem}{Theorem}[section]
\newtheorem{lemma}[theorem]{Lemma}
\newtheorem{proposition}[theorem]{Proposition}
\theoremstyle{definition}
\theoremstyle{remark}
\newtheorem*{remark}{Remark}
\newcommand{\lscott}{[\![}
\newcommand{\rscott}{]\!]}
\DeclareMathAlphabet{\mathitbf}{OML}{cmm}{b}{it}
\DeclareMathSymbol{\alpha}{\mathalpha}{letters}{"0B}
\DeclareMathSymbol{\beta}{\mathalpha}{letters}{"0C}
\DeclareMathSymbol{\gamma}{\mathalpha}{letters}{"0D}
\DeclareMathSymbol{\delta}{\mathalpha}{letters}{"0E}
\DeclareMathSymbol{\epsilon}{\mathalpha}{letters}{"0F}
\DeclareMathSymbol{\zeta}{\mathalpha}{letters}{"10}
\DeclareMathSymbol{\eta}{\mathalpha}{letters}{"11}
\DeclareMathSymbol{\theta}{\mathalpha}{letters}{"12}
\DeclareMathSymbol{\iota}{\mathalpha}{letters}{"13}
\DeclareMathSymbol{\kappa}{\mathalpha}{letters}{"14}
\DeclareMathSymbol{\lambda}{\mathalpha}{letters}{"15}
\DeclareMathSymbol{\mu}{\mathalpha}{letters}{"16}
\DeclareMathSymbol{\nu}{\mathalpha}{letters}{"17}
\DeclareMathSymbol{\xi}{\mathalpha}{letters}{"18}
\DeclareMathSymbol{\pi}{\mathalpha}{letters}{"19}
\DeclareMathSymbol{\rho}{\mathalpha}{letters}{"1A}
\DeclareMathSymbol{\sigma}{\mathalpha}{letters}{"1B}
\DeclareMathSymbol{\tau}{\mathalpha}{letters}{"1C}
\DeclareMathSymbol{\upsilon}{\mathalpha}{letters}{"1D}
\DeclareMathSymbol{\phi}{\mathalpha}{letters}{"1E}
\DeclareMathSymbol{\chi}{\mathalpha}{letters}{"1F}
\DeclareMathSymbol{\psi}{\mathalpha}{letters}{"20}
\DeclareMathSymbol{\omega}{\mathalpha}{letters}{"21}
\DeclareMathSymbol{\varepsilon}{\mathalpha}{letters}{"22}
\DeclareMathSymbol{\vartheta}{\mathalpha}{letters}{"23}
\DeclareMathSymbol{\varpi}{\mathalpha}{letters}{"24}
\DeclareMathSymbol{\varrho}{\mathalpha}{letters}{"25}
\DeclareMathSymbol{\varsigma}{\mathalpha}{letters}{"26}
\DeclareMathSymbol{\varphi}{\mathalpha}{letters}{"27}
\newcommand{\mylist}[1]{\mathitbf{#1}}
\newcommand{\nat}{\mathtt{N}}
\newcommand{\cont}{\mathbf{Cont}}
\newcommand{\synt}{\mathbf{SyntGpd}}
\newcommand{\real}[1]{\mathbf{Real}(\mathcal{#1})}
\newcommand{\C}{\mathbb{C}}
\newcommand{\D}{\mathbb{D}}
\begin{document}

\title{Combinatorial realizability models of type theory}

\author{Pieter Hofstra and Michael A. Warren}

\address{Department of Mathematics and Statistics, University of
  Ottawa\\585 King Edward Ave., Ottawa, Ontario K1N 6N5 Canada}
\email{phofstra@uottawa.ca}

\address{School of Mathematics, Institute for Advanced Study\\Einstein
  Dr., Princeton, New Jersey 08540 USA}
\email{mwarren@math.ias.edu}
\thanks{The results of this paper were originally obtained while
  Warren was a postdoctoral fellow in the Department of Mathematics
  and Statistics at the University of Ottawa, where he was supported by
  the Fields Institute.  He is currently supported by the National Science
  Foundation.  In particular, this material is based upon work
  supported by the National Science Foundation under agreement
  No. DMS-0635607. Any opinions, findings and conclusions or
  recommendations expressed in this material are those of the authors
  and do not necessarily reflect the views of the National Science Foundation.}

\date{\today}

\maketitle

\begin{abstract}
We introduce a new model construction for Martin-L\"of intensional type theory, which is sound
and complete for the 1-truncated version of the theory. The model formally combines the
syntactic model with a notion of realizability; it also encompasses the well-known 
Hofmann-Streicher groupoid semantics. As our main application, we use the model to analyse
the syntactic groupoid associated to the type theory generated by a graph $G$, showing that
it has the same homotopy type as the free groupoid generated by $G$. 
\end{abstract}

\tableofcontents

\section{Introduction}

This paper is a contribution to the study of the compelling connections between homotopy theory
and Martin-L\"of's intensional type theory (see~\cite{Awodey:TTH} for a description of this programme). 
We present a new class of models for intensional type theory which allows us to gain insight
into the homotopy-theoretic behaviour of the type theory in a way which is not possible
with other models such as the syntactic model or the Hofmann-Streicher groupoid models.

We call these models \emph{combinatorial realizability models}, because they associate to
the syntactic data of the theory notions of \emph{realizer}, and because realizers of terms of
higher type are defined in terms of realizers of lower type, much in the same way as Kleene 
realizability defines realizers of universally quantified formulae in terms of functions sending
realizers to realizers. (The extension of the notion of realizer to higher types is also closely
resemblant of ---and indeed inspired by--- Tait's technique of logical predicates, see~\cite{Tait:IIFFTI}.)
 Unlike in classical realizability however,
realizers have, a priori, nothing to do with computable functions;
rather, realizers in our models are generally of
a more combinatorial nature. Indeed, in our motivating example, realizers will be edges 
in a suitable graph; they can also be purely syntactical entities. In the limiting case where
realizers are trivial (i.e. where every derivable term is trivially realized) the model reduces
to the syntactic model, or rather a one-dimensional version thereof.

The type theories for which the model construction primarily is designed are
dependent type theories having dependent sums and products with identity types which
are 1-truncated (in the sense that higher identity proofs are forced to be definitional equalities,
see below). The model also works for 0-truncated (i.e., extensional) type theories.
The theories may further be assumed to have a natural number type with the usual 
recursion principle, and may further be extended by axioms postulating new basic types and
terms. 
However, adding axioms involving non-basic types voids the
warranty. We also have not investigated whether the model works in the presence of
W-types and universes.

\subsection{Motivating problem: homotopy types of ML-complexes}

Let us describe in some detail the questions which prompted the investigations reported on here.
Let $G$ be a (directed,
reflexive) graph and consider the theory $\mathbb{T}_{1}[G]$ obtained by augmenting
ordinary intensional Martin-L\"{o}f type theory with the following
data:
\begin{itemize}
\item A new basic type $\ulcorner G\urcorner$;
\item A new basic term $\ulcorner a\urcorner:\ulcorner G\urcorner$ for
  each vertex $a$ of $G$;
\item A new basic term $\ulcorner f\urcorner:\idn{\ulcorner
    G\urcorner}{}(\ulcorner a\urcorner,\ulcorner b\urcorner)$ for each
  edge $f$ with source $a$ and target $b$; 
\item Axioms 
  \begin{prooftree}
    \AxiomC{~}
    \UnaryInfC{$\rr(a)=\ulcorner 1_{a}\urcorner:\idn{\ulcorner
        G\urcorner}{}(\ulcorner a\urcorner,\ulcorner a\urcorner)$}
  \end{prooftree}
  for each vertex $a$ in $G$; and
\item The \emph{1-truncation rule} which states that iterated identity
  types are trivial:
  \begin{prooftree}
    \AxiomC{$p:\idn{\idn{A}{}(a,b)}{}(f,g)$}
    \UnaryInfC{$f=g:\idn{A}{}(a,b)$}
  \end{prooftree}
  for any type $A$ and terms $a,b:A$.
\end{itemize}
(We use the notation $\idn{A}{}(a,b)$\/ rather than $\id{A}(a,b)$\/ to denote the identity type
at $a,b:A$\/ in order to stress the category-theoretic intuition and to make notation for iterated
identity types more palatable.)

It follows from \cite{Hofmann:GITT} that each closed type $A$ in this theory has an
associated groupoid $|A|$ with (definitional equality classes of) closed terms of type
$A$\/ as objects and where the hom-set $|A|(a,b)$\/ consists of closed
terms of type $\idn{A}{}(a,b)$. The composition, identities and inverses of this groupoid
are given by type-theoretic operations; by virtue of the 1-truncation rule the groupoid laws hold
on the nose, and not just up to higher identity terms.

In particular, we may consider the syntactic groupoid 
$|\ulcorner G\urcorner|$\/ (which we simply write
as $|G|$\/ from now on). 
Sending a graph $G$ to the underlying graph of $|G|$
constitutes the object part of a monad $T$ on the category $\rgrph$ of (directed, reflexive) graphs; following
\cite{Awodey:MC}, the algebras are called \mbox{\myemph{1-truncated Martin-L\"{o}f complexes}}.  Understanding the behavior of these
algebras is a first step in the project initiated in \emph{ibid} of
modeling homotopy types using the higher-categorical structures
arising from type theory. 

The first observation is that the theory $\mathbb{T}_{1}[G]$\/ admits an interpretation using the
Hofmann-Streicher groupoid semantics: in order to specify such an interpretation, it suffices to 
interpret the basic data generating the theory. In principle, we can use any groupoid $H$\/ and any
graph morphism $G \to H$\/ to do this, but an obvious choice of $H$\/ is of course the free groupoid
 $\mathcal{F}(G)$\/ on the graph $G$. For the interpretation of the basic terms of the theory we
then may use the inclusion of generators $G \to \mathcal{F}(G)$. This completely determines the model.

By virtue of the interpretation of elimination terms for identity
types, this yields in particular a functor
\[\Psi: |G| \to \mathcal{F}(G). \]
There is also a functor $\Phi:\mathcal{F}(G) \to |G|$\/ 
in the other direction, induced by the universal property of the free groupoid.
 It sends an object of $\mathcal{F}(G)$, i.e.
a vertex $a$\/ of $G$, to the basic term $\ulcorner a \urcorner:\ulcorner G \urcorner$, 
and a formal composite of edges of $G$\/ to the type-theoretic composite. 
By the universal property of $\mathcal{F}(G)$, $\Phi$\/ is actually a section of 
$\Psi$.

We next note that the syntactic groupoid $|G|$\/ is, intuitively speaking, much larger than
$\mathcal{F}(G)$. This is due to the fact that the type theory derives many more terms than
just the generating basic terms coming from the graph $G$. 
For example, if $f$ is
a loop in the graph $G$ on a vertex $a$ and $b$ is any vertex of $G$, the
elimination rule for identity types gives
\begin{prooftree}
  \AxiomC{$\judge{x,y:G,z:\idn{G}{}(x,y)}{G}$}
  \AxiomC{$\judge{x:G}{b:G}$}
  \BinaryInfC{$\Jelim{}([x:G]b,a,a,f):G$}
\end{prooftree}
where the first two hypotheses are simply obtained by weakening.
One would like to know that such ``duplicate'' or ``doppelg\"{a}nger'' terms do nothing
homotopically harmful.  For example, $T(G)$ should have
the same connected components as $G$.  Similarly, from a logical point
of view one would like to know that there are no non-standard terms of
natural number type in $\mathbb{T}_{1}[G]$ in the sense that we would
like to prove that for each term $t$ of natural number type
there exists a numeral $\mathtt{n}$ and a term of type
$\idn{\nat}{}(t,\mathtt{n})$.  (Whether there exist non-standard terms of
natural number type in the presence of Voevodsky's \myemph{univalence
  axiom} is a related question and we expect that the techniques
developed in this paper can be modified to yield a proof of
\emph{Conjecture 1} from \cite{Voevodsky:NSF}.)  

The main application of our realizability model then, is to answer these and related questions.
More concretely, one of the things we shall show is 
that the comparison functor $\Psi:|G| \to \mathcal{F}(G)$\/ 
is in fact an equivalence of groupoids.
This is done by declaring a realizer of a closed term $t:G$\/ to be a morphism $t \to \overline{t}$\/ 
in $|G|$, where $\overline{t}$\/ is a basic term, i.e. a vertex of $G$. These realizers fit together
to form a natural transformation $1_{|G|} \Rightarrow \Phi\Psi$, exhibiting $\mathcal{F}(G)$\/ as
a deformation retract of $|G|$. In particular, this proves that the two have the same homotopy type.
It is shown in~\cite{Awodey:MC} that the techniques developed here can
be used to prove that the category of 1-truncated ML-complexes is
Quillen equivalent to the category of groupoids, and hence that 1-truncated ML-complexes model
homotopy 1-types.

\subsection{Organization of the paper}

Because the actual construction of the model and the proofs are somewhat technical and
lengthy, we begin in Section~\ref{sec:informal} with an informal explanation of
the construction. In particular, we explain what we mean by realizers, and how different
suitable choices of realizers result in models which give us useful information about the 
syntax and the homotopy-theoretic behaviour of the type theory. We also explain in this section
the general setup of the model using categories of families.

In Section~\ref{sec:synt} we develop various syntactical constructions which allow us
to form a syntactic model of the type theory (but in a manner
different from the ordinary term model). In particular, we show how to associate
groupoids and pseudo-functors to types and contexts, and introduce some machinery for
handling weakening and substitution on the level of syntactical groupoids.
The main consequence of the technology introduced is now that the syntactic groupoids and 
pseudo-functors associated to the type theory give rise to a category with families. 

Section~\ref{sec:model} contains the main material. The realizability model itself is an augmentation
of the syntactic model introduced in Section~\ref{sec:synt}, and is 
obtained by gluing in a notion of realizability. We describe the general
structure of this category with families, and then turn to the semantic type formers, dealing with
dependent products, sums, identity types and natural numbers successively. 

Details regarding some of the intuitively clear but technically involved type-theoretic constructions
have been collected in Appendix~\ref{app:synt}; finally, for ease of reference
Appendix~\ref{app:real} summarizes the realizability clauses which can be extracted from the 
model construction in Section~\ref{sec:model}.

\subsection{Notational Conventions}

The formulation of the rules of type theory we use are listed in complete detail in 
the Appendix. When it does not result in confusion,
 we suppress some typing information to reduce clutter. For example,
instead of the cumbersome $\Jelim{[x,y:A,z:\idn{A}{}(x,y)]}([x:A]\phi,a,b,f)$, we simply write
$\Jelim{}(\phi,a,b,f)$\/ (or $\Jelim{}([x:A]\phi,a,b,f)$\/ when we wish to make clear
which variable is bound in this term).

Because we will often be dealing with lists of terms or variables it
will be convenient to introduce a notation for such lists which will
not result in excess clutter.  In particular, we will denote such
lists by using a bold face font.  E.g., the list
$x_{1},\ldots,x_{n}$ of variables will be denoted by $\mylist{x}$.
Similar notation will be employed for lists of similar terms.  For
example, $\rr(\mylist{\alpha})$ denotes the list of terms
$\rr(\alpha_{1}),\ldots,\rr(\alpha_{n})$ where the list
$\mylist{\alpha}$ is understood.  Also, $t[\mylist{a}/\mylist{x}]$
denotes the term $t[a_{1}/x_{1},\ldots,a_{n}/x_{n}]$ and \emph{not} the
simultaneous substitution $t[a_{1},\ldots,a_{n}/x_{1},\ldots,x_{n}]$.
We will often also avoid displaying variables when the context is clear.

\section{Informal description of the model}\label{sec:informal}

This section explains the structure of the model without giving proofs. We first discuss the
parameters in the model construction, namely the notion of realizer of terms of ground type. 
Various different applications of the model construction arise by choosing a suitable notion of realizer.
We then proceed to explain the categorical form which the interpretation takes, and how terms
of higher and dependent types are interpreted. 

\subsection{Realizers and dense terms}

We focus on a theory of the form $\mathbb{T}_1[G]$, where $G$\/ is a graph. 
To specify an interpretation of such a theory, it will suffice to define what is meant
by a realizer of a closed term of type $G$. Moreover, this assignment of realizers to closed
terms has to be functorial, in the sense that when $f:\idn{G}{}(a,b)$\/ is a closed term of 
identity type, we have an operation which sends realizers of $a$\/ to realizers of $b$.
Formally, this data amounts to a presheaf
\[ \forces: |G| \to \sets\]
on the syntactic groupoid $|G|$. We suggestively write $\alpha \forces t:G$\/ instead of 
$\alpha \in \; \forces\!\!\!(t)$. Since the soundness theorem for the interpretation states that
every term has a realizer, we must insist that basic terms have realizers. More precisely, for
a notion of realizer $\forces$\/ to give rise to a sound interpretation, we must ask that there exists,
for each vertex $a$\/ of the graph $G$, a realizer $\alpha_a \forces a:G$, such that
the functorial action of $\forces$\/ respects this, in the sense that for each edge $f:a \to b$\/ in $G$,
$\alpha_a\cdot f = \alpha_b$\/ (where we write $\alpha \cdot f$\/ for the action of $\forces$\/ 
on arrows).
Once this data has been specified, the model associates to each closed
term $t:G$\/ a distinguished realizer.

In our motivating example, we are concerned with
the functors $\Phi:\mathcal{F}(G) \to |G|$\/ and $\Psi:|G| \to \mathcal{F}(G)$\/ 
relating the syntactic groupoid $|G|$\/ on $G$\/ to the free groupoid on $G$. 
The composite $\Phi\Psi:|G| \to |G|$\/ 
will be referred to as the \emph{closure functor}, and will be denoted $t \mapsto \overline{t}$. 
We shall call a term $t$\/ \emph{dense} when its interpretation in the groupoid model based on
the free groupoid $\mathcal{F}(G)$\/ is an identity. In particular, a closed term $f:\idn{G}{}(a,b)$,
regarded as a morphism in the syntactic groupoid $|G|$, is dense whenever its closure is
an identity $\overline{f}=1_{\overline{a}}$. 

With this notation and terminology, we now define a realizer of a closed term $a:G$\/ to be
a dense map $\alpha:a \to \overline{a}$. We must also specify how this assignment is
functorial in $a$: given a morphism $f:\idn{G}{}(a,b)$\/ and a realizer $\alpha:a \to \overline{a}$,
consider the composite
\[ 
\xymatrix{
b \ar[r]^{f^{-1}} & a \ar[r]^{\alpha} & \overline{a} \ar[r]^{\overline{f}} & \overline{b}
} \]
Since the closure of $\alpha$\/ is the identity, it is easily seen that this morphism is dense,
i.e. that it is a realizer of $b$. It is also clear that this is functorial.

What is not a priori obvious however is that all closed terms have realizers. Indeed, this is the
content of the soundness lemma. However, basic terms are trivially realized by the identity.

It is not necessary, for the specification of a realizability model, to state what realizers of terms of
identity types $\idn{G}{}(a,b)$\/ are. Indeed, assuming we have realizers $\alpha:a \to \overline{a}$\/ 
and $\beta:b \to \overline{b}$, we shall simply declare a closed term $f:\idn{G}{}(a,b)$\/ 
to be realized when the square
\[
\xymatrix{
a \ar[r]^f \ar[d]_\alpha & b \ar[d]^\beta\\
\overline{a} \ar[r]_{\overline{f}} & \overline{b}
}
\]
commutes, i.e. when $\beta$\/ is the result of reindexing the realizer $\alpha$\/ along $f$.

\begin{remark}
An obvious generalization suggests itself: instead of taking realizers for identity proofs between terms
to be determined by those of the terms themselves (and the functorial action on realizers) we could
take a realizer to be a higher identity proof witnessing the fact that the above square commutes up 
to equivalence. We conjecture that this is the appropriate generalization of the model to theories
without the truncation rule.
\end{remark}

There are other notions of realizer which may be used, and in order to avoid the impression that
the above example is the only one, we briefly point out a couple of other
possibilities (several other examples can be found in \cite{Awodey:MC}). First, one could consider the
terminal presheaf $|G| \to \sets$\/ which sends each object to a fixed singleton set. In this case,
realizers contain no information, and the resulting model will simply be a 1-truncated syntactic model
(i.e. a model of the theory purely based on syntactic groupoids). Second, more general closure functors
can be used. Any endofunctor $F:|G| \to |G|$\/ gives rise to a model in which a realizer for a term
$a$\/ is a dense map $a \to Fa$, as long as basic terms have realizers. 

A further generalization is possible:

\begin{theorem}
Let $G$\/ be a graph, $H$\/ a groupoid and let $P,Q:|G| \to H$\/ be two functors. 
Suppose furthermore that we are given
a morphism $\alpha_a: P(a) \to Q(a)$\/ for each basic term $a$
such that, for each basic term $f:a \to b$, the following diagram commutes:
\[
\xymatrix{
P(a) \ar[d]_{P(f)} \ar[r]^{\alpha_a} & Q(a) \ar[d]^{Q(f)} \\
P(b) \ar[r]_{\alpha_b} & Q(b).
}
\]
Then there exists a natural transformation $\alpha:P \Rightarrow Q$\/ whose component 
at a basic term $a$\/ is $\alpha_a$.
\end{theorem}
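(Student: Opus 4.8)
The plan is to reduce the statement to the universal property of the free groupoid $\freegpd(G)$ and then transport the resulting natural transformation back along the retraction $\Psi$. First I would precompose with the section $\Phi:\freegpd(G)\to|G|$. The objects of $\freegpd(G)$ are exactly the vertices of $G$, i.e. the basic terms, and its morphisms are the formal composites and inverses of edges; since $\Phi$ sends a vertex $a$ to $\ulcorner a\urcorner$ and an edge $f$ to $\ulcorner f\urcorner$, the given family $\alpha_{a}:P(a)\to Q(a)$ is precisely a family of candidate components for the two functors $P\Phi,\,Q\Phi:\freegpd(G)\to H$, and the hypothesised squares are exactly the naturality condition on the generating edges. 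Because every morphism of $\freegpd(G)$ is a composite of edges and formal inverses, naturality on generators propagates to all morphisms by functoriality of $P\Phi$ and $Q\Phi$, together with the observation that in a groupoid a morphism and its inverse satisfy the naturality square simultaneously. This yields, with no further work, a natural transformation $\beta:P\Phi\Rightarrow Q\Phi$ with $\beta_{a}=\alpha_{a}$.

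The second step is to push $\beta$ forward along $\Psi:|G|\to\freegpd(G)$ and correct by the comparison data. Whiskering gives $\beta\Psi:P\Phi\Psi\Rightarrow Q\Phi\Psi$, a natural transformation between the two functors precomposed with the closure functor $t\mapsto\overline{t}=\Phi\Psi(t)$. To convert this into a transformation between $P$ and $Q$ themselves I would conjugate by the dense comparison maps relating $t$ to its closure $\overline{t}$: writing $\eta_{t}:t\to\overline{t}$ for the realizer attached to $t$, which for a basic term is the identity, I set
\[
\alpha_{t}\;:=\;Q(\eta_{t})^{-1}\circ(\beta\Psi)_{t}\circ P(\eta_{t})\;:\;P(t)\to Q(t),
\]
where $(\beta\Psi)_{t}=\beta_{\Psi(t)}=\alpha_{\overline{t}}$, since $\Psi(t)$ is the vertex underlying the basic term $\overline{t}$. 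At a basic term $a$ we have $\eta_{a}=1_{a}$ and $\overline{a}=a$, so $\alpha_{a}$ reduces to the prescribed component, as required.

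Naturality of the resulting $\alpha$ is then a formal diagram chase: for a morphism $g:s\to t$ in $|G|$ one combines the naturality square of $\eta$ (namely $\eta_{t}\circ g=\overline{g}\circ\eta_{s}$, where $\overline{g}=\Phi\Psi(g)$ is a composite of basic edges and their inverses), the naturality of $\beta\Psi$ at $g$, and functoriality of $P$ and $Q$, noting that $\overline{g}$ inherits the naturality condition from the edges out of which it is built. The genuine obstacle, and the only non-formal ingredient, is the existence and naturality of the dense maps $\eta_{t}:t\to\overline{t}$ for \emph{non-basic} closed terms $t$ --- the doppelg\"{a}nger terms produced by $\Jelim{}$ and the other eliminators. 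Establishing that every closed term of type $G$ admits such a dense map to a basic term, coherently in $t$, is exactly the soundness content of the realizability model developed in the main body; I would prove it by induction on the derivation of $t$, the critical case being a term $\Jelim{}([x:G]b,a,a',f):G$, where the computation rule for $\Jelim{}$ at reflexivity anchors the construction and the functorial transport of realizers along $f$ supplies the dense map in the general case.
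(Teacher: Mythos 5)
Your argument is correct, but it takes a genuinely different route from the paper's. The paper proves the theorem in one stroke by instantiating the realizability framework with a notion of realizer tailored to the statement: a realizer of a closed term $t:G$ is declared to be an element of the hom-set $H(P(t),Q(t))$, with reindexing along $f:t\to s$ given by conjugation, $\tau\cdot f = Q(f)\,\tau\, P(f)^{-1}$. The hypothesised squares say exactly that the chosen $\alpha_{a}$ are stable under this reindexing along basic edges, and soundness then hands you a distinguished realizer for every closed term together with the compatibility $\eta_{s}\cdot g=\eta_{t}$ (from the realizability clause for identity types) that \emph{is} naturality. You instead invoke the special case $H=|G|$, $P=1_{|G|}$, $Q=\Phi\Psi$ --- the deformation retract $\eta:1_{|G|}\Rightarrow\Phi\Psi$ built from dense maps $t\to\overline{t}$ --- and then derive the general statement formally, by extending $\alpha$ over the free groupoid $\mathcal{F}(G)$ and conjugating the whiskered transformation $\beta\Psi$ by $\eta$. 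The conjugation computation checks out: naturality of $\eta$, naturality of $\beta$ at $\Psi(g)$, and functoriality of $P$ and $Q$ combine exactly as you say, and at basic terms $\eta_{a}=1_{a}$ so the prescribed components are recovered. What your route buys is the observation that the general theorem is a formal consequence of the single instance $1_{|G|}\Rightarrow\Phi\Psi$; what it costs is that producing $\eta$ \emph{and its naturality over all morphisms of $|G|$, not just the basic edges} still requires the full soundness machinery (naturality is extracted from soundness applied to terms of identity type, not merely from the existence of realizers for terms of type $G$), so nothing is actually saved, and the paper's direct choice of realizer is both shorter and a better illustration of why the notion of realizer was left as a parameter in the first place. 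If you write this up, make the dependence explicit: state the deformation-retract result as a lemma, note that it is itself the instance of the present theorem with trivial data, and only then run your conjugation argument.
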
 

\begin{proof}
We define a notion of realizability for $G$: let $\forces:|G| \to \sets$\/ be the presheaf which 
sends an object $t:G$\/ to the set $H(P(t),Q(t))$. The functorial action is by conjugation:
given $f:t \to s$\/ and $\tau \forces t:G$, we set $\tau\cdot f =_{\textrm{def}} Q(f)\tau P(f)^{-1}$.
A basic term $a:A$\/ is then realized by the given $\alpha_a$. This gives a realizability model,
the soundness of which gives in particular that each $t:G$\/ has a realizer, and that these
form a natural transformation as desired.
\end{proof}

An application of this more general result appears in~\cite{Awodey:MC}, where it is used to show that
the free 1-truncated ML-complex on a groupoid $G$\/ (i.e. an object in the image of the left adjoint to 
the forgetful functor from ML-complexes to groupoids) is equivalent to $G$.

\subsection{The general setup of the model}

We now turn to the general organization of the model, which augments the syntactic model with
the concept of realizer. We begin by sketching the syntactic structure, and then explain how
the realizers are added into this model.

As above, we shall denote the syntactic groupoid associated to a closed type
$A$\/ by $|A|$. This can be extended to contexts and type judgements: to each well-formed context $\Gamma$\/ we associate a groupoid $|\Gamma|$, and to each type judgement $\Gamma \vdash
T(\mylist{x})$\/ we associate a pseudo-functor 
\[ |\Gamma \vdash T(\mylist{x})|: |\Gamma| \to \groupoids\]
which sends an object $\mylist{c}$\/ of $|\Gamma|$\/ to the syntactic groupoid
$|T(\mylist{c})|$. (The action on morphisms is given by a type-theoretic version of
change-of-base; details will be provided in section~\ref{sec:synt}.)
Then we use the Grothendieck construction to define the syntactic groupoid of the extended context to be
\[ |\Gamma, y:T(\mylist{x})|=\int |\Gamma \vdash T(\mylist{x})|.\]
Thus we obtain a fibration of groupoids
\[ |\Gamma, y:T| \to |\Gamma|. \]

Suppose now that we have a term judgement $\Gamma \vdash t:T$. Then
the interpretation $|\judge{\Gamma}{t:T}|$ of
$t$\/ will be a section of the fibration $|\Gamma, y:T| \to |\Gamma|$, namely the one
which sends an object $\mylist{c}$\/ of $\Gamma$\/ to the object $t(\mylist{c})$. 

The realizability model will interpret the syntax in a similar way. We will denote the
interpretation in the realizability model by $\lscott - \rscott$\/ (where it is assumed we have 
fixed a notion of realizer).

First, consider the basic type $\ulcorner G \urcorner$. By definition, a notion of realizer is
a functor $\mathcal{R}: |G| \to \sets$. Applying the Grothendieck construction to this presheaf, we obtain 
a groupoid
\[ \lscott G \rscott = \int \mathcal{R} \]
which we take to be the interpretation of the type $G$. It naturally comes with a fibration
\[ 
\xymatrix{ \lscott G \rscott \ar[r]^{\pi_G} & |G|.}
\]
This will be a general pattern: the realizability interpretation of any closed type (context) $\Gamma$\/ will 
be a groupoid $\lscott \Gamma \rscott$\/ fibred over $|\Gamma|$:
\[ \xymatrix{ \lscott \Gamma \rscott \ar[r]^{\pi_\Gamma} & |\Gamma|.}\]

It is technically convenient to introduce an intermediate construction: to each context $\Gamma$\/ 
we will also associate a groupoid $\|\Gamma\|$. 
Thus we will have three groupoids $|\Gamma|,\|\Gamma\|$ and
$\lscott\Gamma\rscott$\/ which will fit together in a commutative
diagram
\begin{align}\label{eq:a_nice_triangle}
  \xy
  {\ar(0,15)*+{\lscott\Gamma\rscott};(30,15)*+{\|\Gamma\|}};
  {\ar_{\pi_{\Gamma}}(0,15)*+{\lscott\Gamma\rscott};(15,0)*+{|\Gamma|}};
  {\ar(30,15)*+{\|\Gamma\|};(15,0)*+{|\Gamma|}};
  \endxy
\end{align}
such that each of the components is a fibration.  With this picture in
mind,  the interpretation of a judgement $\judge{\Gamma}{T}$ is a functor
\begin{align*}
  \lscott\judge{\Gamma}{T}\rscott:\|\Gamma,x:T\|\to\sets
\end{align*}
which is to be thought of as sending an object to the set
of realizers of that object; then the extended context is interpreted by Grothendieck construction as
\begin{align*}
  \lscott\Gamma, x:T\rscott & =_{\textrm{def}}
  \int\lscott\judge{\Gamma}{T}\rscott.
\end{align*}
We will also have at each stage of the construction that 
\begin{align*}
  \|\Gamma,x:T\| & =_{\textrm{def}} \int |\judge{\Gamma}{T}|\circ\pi_{\Gamma}
\end{align*}
so that there is always a pullback diagram
\begin{align*}
  \xy
  {\ar(0,15)*+{\|\Gamma,x:T\|};(30,15)*+{|\Gamma,x:T|}};
  {\ar(0,15)*+{\|\Gamma,x:T\|};(0,0)*+{\lscott\Gamma\rscott}};
  {\ar_{\pi_{\Gamma}}(0,0)*+{\lscott\Gamma\rscott};(30,0)*+{|\Gamma|}};
  {\ar(30,15)*+{|\Gamma,x:T|};(30,0)*+{|\Gamma|}};
  \endxy
\end{align*}
in the category of groupoids. 

Putting this together with what was described in
(\ref{eq:a_nice_triangle}) above we have
\begin{align*}
  \xy
  {\ar(0,15)*+{\|\Gamma,x:T\|};(30,15)*+{|\Gamma,x:T|}};
  {\ar(0,15)*+{\|\Gamma,x:T\|};(0,0)*+{\lscott\Gamma\rscott}};
  {\ar_{\pi_{\Gamma}}(0,0)*+{\lscott\Gamma\rscott};(30,0)*+{|\Gamma|}};
  {\ar(30,15)*+{|\Gamma,x:T|};(30,0)*+{|\Gamma|}};
  {\ar(0,30)*+{\lscott\Gamma,x:T\rscott};(0,15)*+{\|\Gamma,x:T\|}};
  {\ar^{\pi_{(\Gamma,x:T)}}@/^1pc/(0,30)*+{\lscott\Gamma,x:T\rscott};(30,15)*+{|\Gamma,x:T|}};
  \endxy
\end{align*}

The intuition to keep in mind here is that the groupoid $\|\Gamma,x:T\|$\/ has objects
$(\mylist{c},\mylist{\gamma},t)$, 
where $\mylist{c}$\/ is an object of the syntactic groupoid $\Gamma$,
where the $\mylist{\gamma}$\/ are realizers for $\mylist{c}$, and 
where $t$\/ is an object of $|\Gamma,x:T|$\/ over $\mylist{c}$. 
Thus in this intermediate groupoid 
there is no information about realizers for $t$\/ included, and this is precisely the difference
between $\|\Gamma,x:T\|$\/ and the full interpretation $\lscott
\Gamma,x:T\rscott$, where an object is of the form
$(\mylist{(c,\gamma)},t,\tau)$, with $\tau$\/ a realizer for $t$.

Indeed, in order to make this
intuition a bit more apparent we will often write
\begin{align*}
  \tau\forces_{\mylist{c},\mylist{\gamma}}t:T(\mylist{c})
\end{align*}
to indicate that $\tau$ is an object of the (discrete) groupoid
$\lscott\judge{\Gamma}{T}\rscott_{(\mylist{c},\mylist{\gamma},t)}$.
When no confusion will result we often drop $\mylist{c}$ from the
subscript and simply write $\tau\forces_{\mylist{\gamma}}t:T(\mylist{c})$.
Functoriality of $\lscott\judge{\Gamma}{T}\rscott$ implies that if
$\tau$ is as above and $(\mylist{h},m):(\mylist{c},\mylist{\gamma},t)\to(\mylist{c}',\mylist{\gamma}',t')$\/ is a morphism in $\|\Gamma,x:T\|$, 
then we have
\begin{align*}
  \tau\cdot(\mylist{h},m)\forces_{\mylist{c}',\mylist{\gamma}'}t':T(\mylist{c}').
\end{align*}

Thus, in order to give the interpretation of a type judgement
$\judge{\Gamma}{T}$ it suffices to provide the following data:
\begin{itemize}
\item We must say what is a realizer
  \begin{align*}
    \tau\forces_{\mylist{\gamma}}t:T(\mylist{c})
  \end{align*}
  for $(\mylist{c},\mylist{\gamma})$ an object of $\lscott\Gamma\rscott$
  and $t:T(\mylist{c})$ a term; and
\item We must give a reindexing action which sends a realizer
  $\tau\forces_{\mylist{\gamma}}t:T(\mylist{c})$ to 
  \begin{align*}
    \tau\cdot(\mylist{h},m)\forces_{\mylist{\gamma}'}t':T(\mylist{c}')
  \end{align*}
  for any arrow
  $(\mylist{h},m):(\mylist{c},\mylist{\gamma},t)\to(\mylist{c}',\mylist{\gamma}',t')$
  in $\|\Gamma,x:T\|$,
  and we must verify functoriality of reindexing.
\end{itemize}

\subsection{Interpretation of terms}

A term $\judge{\Gamma}{t:T}$ will be interpreted as a section
\begin{align*}
  \xy
  {\ar^-{\lscott\judge{\Gamma}{t:T}\rscott}(0,15)*+{\lscott\Gamma\rscott};(30,15)*+{\lscott\Gamma,x:T\rscott}};
  {\ar_{1_{\lscott\Gamma\rscott}}(0,15)*+{\lscott\Gamma\rscott};(15,0)*+{\lscott\Gamma\rscott}};
  {\ar(30,15)*+{\lscott\Gamma,x:T\rscott};(15,0)*+{\lscott\Gamma\rscott}};
  \endxy
\end{align*}
satisfisfying the condition that the following diagram
\begin{align*}
  \xy
  {\ar^-{\lscott\judge{\Gamma}{t:T}\rscott}(0,15)*+{\lscott\Gamma\rscott};(30,15)*+{\lscott\Gamma,x:T\rscott}};
  {\ar_{\pi}(0,15)*+{\lscott\Gamma\rscott};(0,0)*+{|\Gamma|}};
  {\ar^{\pi}(30,15)*+{\lscott\Gamma,x:T\rscott};(30,0)*+{|\Gamma,x:T|}};
  {\ar_-{|\judge{\Gamma}{t:T}|}(0,0)*+{|\Gamma|};(30,0)*+{|\Gamma,x:T|}};
  \endxy
\end{align*}
commutes.  This means that to give the interpretation of a term
$\judge{\Gamma}{t:T}$ it suffices to give, for each object $(\mylist{c},\mylist{\gamma})$ of
$\lscott\Gamma\rscott$, a realizer\footnote{I.e., $t[\mylist{\gamma}]$
  is the realizer part of
  $\lscott\judge{\Gamma}{t:T}\rscott(\mylist{c},\mylist{\gamma})$:
  $\lscott\judge{\Gamma}{t:T}\rscott(\mylist{c},\mylist{\gamma}) = (\mylist{c},\mylist{\gamma},t(\mylist{c}),t[\mylist{\gamma}])$.}
\begin{align*}
  t[\mylist{\gamma}]\forces_{\mylist{\gamma}}t(\mylist{c}):T(\mylist{c}),
\end{align*}
and to prove that 
\begin{align}\label{eq:weak_C2}
  t[\mylist{\gamma}]\cdot\bigl(\mylist{h},t|\mylist{h}\bigr)
  &  = t[\mylist{\gamma}']
\end{align}
for each arrow $\mylist{h}:(\mylist{c},\mylist{\gamma})\to(\mylist{c}',\mylist{\gamma}')$
in $\lscott\Gamma\rscott$. Here, $t|\mylist{h}:t(\mylist{c}) \to t(\mylist{c'})$\/ 
denotes the (second component of) the action of $t$\/ on the morphism $\mylist{h}$, 
see section \ref{sec:synt} for details.

\subsection{Categories with Families}

In order to ensure that the interpretations of contexts, types and terms is independent of derivations,
we organize our semantics as a \emph{Category with Families} (see~\cite{Dybjer:ITT,Hofmann:SSDT} for precise 
definitions, examples, and connections to other notions of semantics for dependent type theories). 
Such a model consists of a category of semantic contexts (which will
serve as interpretations of contexts) and semantic context morphisms (which will serve as interpretations of context morphisms).
In addition, to each semantic context $C$\/ there is associated a set of semantic types $Ty(C)$\/ 
in a contravariant manner; and to each semantic type $T \in Ty(C)$\/ 
there is associated a set of semantic terms $Tm(T)$, again with a contravariant action by context morphisms.
Finally, one requires that this structure admits \emph{comprehension}, in the usual fibrational sense.

A category with families (CwF from now on) interprets the basic calculus of type dependency; 
definitionally equal entities in the type theory are interpreted as set-theoretically equal entities in 
the model. We shall also need to interpret dependent products,
dependent sums, natural numbers and intensional identity types.
A CwF is said to support dependent products (dependent sums, natural
numbers, identity types) if it admits the categorical counterparts
of the type and term constructors for dependent product types
(resp. dependent sums, natural numbers, identity types). 

We will actually construct our model in two stages. First, we show that there is a CwF of syntactic
groupoids: this we obtain by taking the syntactic model (whose underlying category is the category of
contexts) and then transfer this model along the functor which takes a context $\Gamma$\/ and returns its
syntactic groupoid $|\Gamma|$. 

Then we form the gluing of this functor and consider the full subcategory on the cloven fibrations. 
An object will have the form $\lscott \Gamma \rscott \to |\Gamma|$, and this is precisely how
the realizability semantics interprets a context $\Gamma$. Types over $\Gamma$\/ are
then defined to be extended contexts $\Gamma,x:A$, together with a specification of
realizers $\|\Gamma,x:A\| \to \sets$. 

\section{The syntactic structure}\label{sec:synt}

We now describe in detail the syntactic part of the model. First, we introduce sequential $\Jelim{}$-terms,
which are highly useful in describing the type-theoretic constructions corresponding to the iterated
Grothendieck constructions appearing in the interpretation of contexts and types. We then explain
the syntactic interpretation of types and terms and establish soundness.

\subsection{Sequential $\Jelim{}$-terms}

To begin, recall the type-theoretic analogue of (covariantly) reindexing a term $t:B(a)$\/ along
an identity proof $f:\idn{A}{}(a,b)$\/ (where $\judge{x:A}{B(x)}$): we define the term
$t*f:B(b)$\/ as

\begin{prooftree}
  \AxiomC{$\judge{x,y:A,z:\idn{A}{}(x,y)}{B(y)^{B(x)}}$}
  \noLine
  \UnaryInfC{$\judge{x:A}{\lambda v.v:B(x)^{B(x)}}$}
  \noLine  
 \UnaryInfC{$\judge{}{f:\idn{A}{}(a,b)}$}
 \UnaryInfC{$\judge{}{\Jelim{}{(\lambda v.v,a,b,f):B(b)^{B(a)}  } }$}
 \AxiomC{$\judge{}{t:B(a)} $}
\BinaryInfC{$\judge{}{\app(\Jelim{}(\lambda v.v,a,b,f) ,t ):B(b) } $}
\end{prooftree}

Note that in the groupoid interpretation the judgement $\judge{x:A}{B(x)}$\/ takes the form of
a functor from the groupoid interpreting the context $x:A$\/ to the category of groupoids. 
A term $f:\idn{A}{}(a,b)$\/ is interpreted as an arrow $f:a \to b$\/ in the groupoid interpreting $x:A$,
and the term $t*f$\/ is then interpreted as $t \cdot f$, where we have omitted
semantic brackets in order to simplify notation and where $-\cdot f$
denotes the action of the functor interpreting $\judge{x:A}{B(x)}$ on $f$.

Of course, this reindexing action does not just work for closed terms; the above derivation also
works in an ambient context; in particular, both $t$\/ and $a,b$\/ and $f$\/ can 
simply be variables.

Given a context
\begin{align*}
  \Gamma & = \bigl(x_{1}:A_{1},\ldots,x_{n}:A_{n}\bigr)
\end{align*}
there is a new context
$\tilde{\Gamma}$ which consists of variable declarations that we now
describe.  Thinking of the term forming operation $-*-$ described
above as base change the context $\tilde{\Gamma}$ should be thought of
encoding the type theoretic data corresponding to arrows in the
Grothendieck construction of the associated functor.  The first part of
$\tilde{\Gamma}$ has variable declarations 
\begin{align*}
   \bigl(x_{1},y_{1}:A_{1},x_{2}:A_{2}(x_{1}),y_{2}:A_{2}(y_{1}),\ldots,x_{n}:A_{n}(\mylist{x}),y_{n}:A_{n}(\mylist{y})\bigr).
\end{align*}
It then consists of a variable declaration
$z_{1}:\idn{A_{1}}{}(x_{1},y_{1})$.  Given such a variable declaration we
have $x_{2}*z_{1}:A_{2}(y_{1})$.  As such, we may form the next
variable declaration $z_{2}:\idn{A_{2}(y_{1})}{}(x_{2}*z_{1},y_{2})$.
Similarly, given $z_{2}$ we may form
$x_{3}*z_{1}:A_{3}(y_{1},x_{2}*z_{1})$ and then
$(x_{3}*z_{1})*z_{2}:A_{3}(y_{1},y_{2})$.  Accordingly, the next
variable declaration that we add is of the form
\begin{align*}
  z_{3}:\idn{A_{3}(y_{1},y_{2})}{}\bigl((x_{3}*z_{1})*z_{2},y_{3}\bigr).
\end{align*}

So far, the variables $z_1,z_2$\/ and $z_3$\/ may be thought of as representing an
arrow from $x_3$\/ to $y_3$\/ in the groupoid associated to the context
$x_1:A_1,x_2:A_2(x_1),x_3:A_3(x_1,x_2)$. In a diagram:
\[
\xymatrix{
|x_1:A_1,x_2:A_2(x_1),x_3:A_3(x_1,x_2)|\ar[d] 
     && x_3 \ar@{|-->}[d] & x_3*z_1 & (x_3*z_1)*z_2\ar@{|-->}[d] \ar[r]^-{z_3} & y_3 \ar@{|-->}[dl]\\
|x_1:A_1,x_2:A_2(x_1)| \ar[d] 
   && x_2 \ar@{|-->}[d] & x_2*z_1 \ar@{|-->}[d] \ar[r]^-{z_2} & y_2 \ar@{|-->}[dl]\\
|x_1:A_2| && x_1 \ar[r]^{z_1} & y_1 
}
\]
where $z_3$\/ is an arrow in the fibre over $y_2$\/ and $z_2$\/ an arrow in the fibre over $y_1$.

In general, for each $1< m\leq n$,
\begin{align*}
  z_{m}:\idn{A_{m}(y_{1},\ldots,y_{m-1})}{}\bigl((\cdots(x_{m}*z_{1})*\cdots)*z_{m-1},y_{m}\bigr)
\end{align*}
is an arrow in the fibre over $y_{m-1}$.
Then, $\tilde{\Gamma}$ is the collection of all of the variable
declarations
\begin{align*}
  x_{1},\ldots,x_{n},y_{1},\ldots,y_{n},z_{1},\ldots,z_{n}
\end{align*}
of the types described above.  Note that we have 
\begin{align*}
  \tilde{\Gamma}[\mylist{x}/\mylist{y},\rr(\mylist{x})/\mylist{z}]
  & = \Gamma.
\end{align*}
We observe that we have the following derived rule
\begin{prooftree}
  \AxiomC{$\judge{\tilde{\Gamma}}{T}$}
  \noLine
  \UnaryInfC{$\judge{\Gamma}{\varphi:T[\mylist{x}/\mylist{y},\rr(\mylist{x})/\mylist{z}]}$}
  \UnaryInfC{$\judge{\tilde{\Gamma}}{\Jelim{[\judge{\tilde{\Gamma}}{T}]}^{\sigma}([\mylist{x}]\varphi,\mylist{x},\mylist{y},\mylist{z}):T}$}
\end{prooftree}
Here, as in the usual formulation of the elimination rule, the
variables enclosed in square brackets $[\mylist{x}]$ indicate that these variables are bound in
the term $\varphi$.  When no confusion will result these and other
bits of bookkeeping (such as the subscript
$\judge{\tilde{\Gamma}}{T}$) will be omitted.  

Just as an ordinary $\Jelim{}$-term can be regarded as an expansion of a term along an arrow,
a sequential $\Jelim{}$-term is to be thought of as an expansion of a term along an arrow 
in an iterated Grothendieck construction.

These terms satisfy the following conversion rule
\begin{prooftree}
  \AxiomC{$\judge{\tilde{\Gamma}}{T}$}
  \noLine
  \UnaryInfC{$\judge{\Gamma}{\varphi:T[\mylist{x}/\mylist{y},\rr(\mylist{x})/\mylist{z}]}$}
  \UnaryInfC{$\judge{\Gamma}{\Jelim{}^{\sigma}(\varphi,\mylist{x},\mylist{y},\mylist{z})[\mylist{x}/\mylist{y},\rr(\mylist{x})/\mylist{z}]\;=\;\varphi}$}
\end{prooftree}

The explicit definition of these terms and the corresponding parameterized
versions can be found in Section \ref{app:seq}.  We point out that this construction
is not novel: it appeared in~\cite{Gambino:ITWFS}.

\subsection{Groupoids and pseudo-functors associated to types}

We now proceed to associate to each context $\Gamma$\/ a groupoid $|\Gamma|$, and
to each type judgement $\judge{\Gamma}{T}$\/ a pseudo-functor
\[ |\judge{\Gamma}{T}|:|\Gamma| \to \groupoids. \]
This will be done by induction on the length of the context $\Gamma$.

When $\Gamma=()$\/ is the empty context, we set $|()|=1$, the terminal groupoid.
For a closed type judgement $\judge{}{A}$\/ (i.e. when $A$\/ is a closed type) we 
set $|\judge{}{A}|:1 \to \groupoids$\/ to be the pseudo-functor corresponding to the groupoid
$|A|$, the syntactic groupoid of $A$\/ (which, as explained earlier, has definitional equality classes
of closed terms $a:A$\/ as objects and terms $f:\idn{A}{}(a,b)$\/ as its morphisms). 
The context $(x:A)$\/ is then interpreted as the Grothendieck construction of this pseudo-functor,
i.e. we have $|x:A|=_{\textrm{def}}|A|$.

Next, suppose that we have already defined the groupoid $|\Gamma|$\/ associated to 
a valid context $\Gamma$, and that we are given a type
judgement $\judge{\Gamma}{T}$. We wish to define the pseudo-functor 
$|\judge{\Gamma}{T}|:|\Gamma| \to \groupoids$. On an object $\mylist{c}$\/ of $\Gamma$,
we define
\[ |\judge{\Gamma}{T}|_\mylist{c} =_{\textrm{def}} |T(\mylist{c})|. \]
That is, the object $\mylist{c}$\/ is sent to the syntactic groupoid of the closed type $T(\mylist{c})$.
To an arrow $\mylist{h}:\mylist{c} \to \mylist{d}$\/ in $|\Gamma|$\/ we associate the functor
$|T(\mylist{c})|\to|T(\mylist{d})|$ given by
  \begin{align*}
    t:T(\mylist{c}) & \longmapsto
    \bigl(\cdots(t*h_{1})\cdots*h_{n}\bigr):T(\mylist{d})    
  \end{align*}

Note that this definition makes sense because, by hypothesis,
  \begin{align*}
    h_{1} & :\idn{C_{1}}{}(c_{1},d_{1})\\
    h_{2} & :\idn{C_{2}(d_{1})}{}(c_{2}*h_{1},d_{2})\\
    h_{3} &:\idn{C_{3}(d_{1},d_{2})}{}\bigl((c_{3}*h_{1})*h_{2},d_{3}\bigr)\\
    & \vdots\\
    h_{n} &:\idn{C_{n}(d_{1},\ldots,d_{n-1})}{}\bigl((c_{n}*h_{1})\cdots*h_{n-1},d_{n}\bigr).
  \end{align*}

We will usually denote the action of this functor by $-\cdot \mylist{h}$.
Given $\mylist{h}:\mylist{c}\to\mylist{d}$ and $\mylist{k}:\mylist{d}\to\mylist{e}$\/ 
we must construct the coherence
  natural isomorphism $\gamma(\mylist{h},\mylist{k})$ indicated in the following
  diagram:
  \begin{align*}
    \xy
    {\ar^{-\cdot(\mylist{k}\circ\mylist{h})}(0,15)*+{|T(\mylist{c})|};(30,15)*+{|T(\mylist{e})|}};
    {\ar_{-\cdot\mylist{h}}(0,15)*+{|T(\mylist{c})|};(15,0)*+{|T(\mylist{d})|}};
    {\ar_{-\cdot\mylist{k}}(15,0)*+{|T(\mylist{d})|};(30,15)*+{|T(\mylist{e})|}};
    {\ar@{=>}(15,12)*+{};(15,6)*+{}};
    \endxy
  \end{align*}
  For an object $t$ of $|T(\mylist{c})|$ the natural isomorphism
  $\gamma(\mylist{h},\mylist{k})$ has component
  \begin{align*}
    \gamma(\mylist{h},\mylist{k})_{t}:t\cdot(\mylist{k}\circ\mylist{h}) \to (t\cdot\mylist{h})\cdot\mylist{k}
  \end{align*}
  given by the (parameterized) sequential $\Jelim{}$-term
  \begin{align*}
    \Jelim{}^{\sigma}\bigl(\rr(u\cdot\mylist{w}),\mylist{c},\mylist{d},\mylist{h},\mylist{e},\mylist{k},t\bigr):\idn{T(\mylist{e})}{}\bigl(t\cdot(\mylist{k}\circ \mylist{h}),(t\cdot\mylist{h})\cdot\mylist{k}\bigr)
  \end{align*}
 Because $-\cdot 1_{\mylist{c}}$\/ is not just isomorphic but equal to the identity, we do not need
to specify a unit coherence isomorphism. Finally, the coherence laws follow from the 1-truncation
axiom.

As a consequence of the above, we have the following explicit description of composition 
in the groupoid $|\Gamma,x:T|$:
\[ (\mylist{f'},h') \circ \bigl(\mylist{f},h) = (\mylist{f'}\mylist{f}, h'\circ (h\cdot \mylist{f'}) \circ
\gamma(\mylist{f},\mylist{f'})\bigr).\]

We will often denote the second component of this map by $h'\circ_{\mylist{f},\mylist{f'}} h$.

\subsection{Interpretation of terms}\label{subsec:terms}

Now that we have an interpretation of types and contexts, we turn to the interpretation of terms.
For an open term $\judge{\Gamma}{t:T}$, the functor
$|t|:|\Gamma|\to|\Gamma,y:T|$ sends an object $\mylist{c}$ of
$|\Gamma|$ to the object $(\mylist{c},t(\mylist{c}))$ of
$|\Gamma,y:T|$.  An arrow $\mylist{h}:\mylist{c}\to\mylist{d}$
of $|\Gamma|$ is sent by $|t|$ to the arrow
$(\mylist{h},t|\mylist{h}):(\mylist{c},t(\mylist{c}))\to(\mylist{d},t(\mylist{d}))$
where $t|\mylist{h}$ is the term
\begin{prooftree}
  \AxiomC{$\judge{\tilde{\Gamma}}{\idn{T(\mylist{y})}{}(t(\mylist{x})\cdot\mylist{z},t(\mylist{y}))}$}
  \noLine
  \UnaryInfC{$\judge{\Gamma}{\rr(t(\mylist{x})):\idn{T(\mylist{x})}{}(t(\mylist{x}),t(\mylist{x}))}$}
  \UnaryInfC{$\Jelim{}^{\sigma}(\rr(t(\mylist{x})),\mylist{c},\mylist{d},\mylist{h}):\idn{T(\mylist{d})}{}(t(\mylist{c})\cdot\mylist{h},t(\mylist{d}))$}
\end{prooftree}
For a closed term $t:T$ we have the obvious global section $|t|:1\to|x:T|$.

We need to show that $|t|$\/ is actually a functor. Clearly, when $\mylist{h}$\/ is the identity
then by the conversion rule for sequential $\Jelim{}$-terms $t|\mylist{h}=\rr(t(\mylist{c}))$. 
Given composable maps $\xymatrix{\mylist{c} \ar[r]^{\mylist{h}} & \mylist{d}
\ar[r]^{\mylist{k}} & \mylist{e}}$, we need to verify that the following square commutes:
\[ 
\xymatrix{
t(\mylist{c})\cdot \mylist{h}\cdot \mylist{k} \ar[d]_\gamma 
\ar[rr]^{  (t|\mylist{k}) \cdot \mylist{h}} && t(\mylist{d}) \cdot \mylist{k} \ar[d]^{t|\mylist{k}} \\
t(\mylist{c}) \cdot (\mylist{kh}) \ar[rr]_{t|\mylist{hk}} && t(\mylist{e}).
}
\]
However, there is a propositional equality between the two composites (take $\mylist{k}$\/ to 
be the identity so that we get a definitional
 equality $t|\mylist{h}=\rr(t(\mylist{d})) \circ (t |\mylist{h})$,
and then form the appropriate sequential $\Jelim{}$-term). Hence by 1-truncation 
the two are definitionally equal.

\subsection{Weakening}

Consider a weakening inference

\begin{prooftree}
  \AxiomC{$\judge{\Gamma}{A}$}
  \AxiomC{$\judge{\Delta}{}$}
  \BinaryInfC{$\judge{\Gamma,\Delta}{A}$}
\end{prooftree}

Note that there exists a functor
\begin{equation}\label{eq:def_tau}
 \tau_{\Gamma;\Delta;A}:|\Gamma,\Delta,x:A|\to|\Gamma,x:A|
\end{equation}
 given as follows:
\begin{description}
\item[On Objects] For $(\mylist{c},\mylist{d},a)$ an object of
  $|\Gamma,\Delta,x:A|$ we set 
  \begin{align*}
    \tau_{\Gamma;\Delta;A}(\mylist{c},\mylist{d},a) & =_{\textrm{def}} (\mylist{c},a).
  \end{align*}
\item[On Arrows] For an arrow
  $(\mylist{f},\mylist{g},h):(\mylist{c},\mylist{d},a)\to(\mylist{c'},\mylist{d'},a')$
  we first observe that there exists an arrow
  \begin{align*}
    a\dag(\mylist{f},\mylist{g}):a\cdot\mylist{f}=|\judge{\Gamma}{A}|_{\mylist{f}}(a)\to a\cdot(\mylist{f},\mylist{g})=|\judge{\Gamma,\Delta}{A}|_{(\mylist{f},\mylist{g})}(a)
  \end{align*}
  given by the term
  \begin{prooftree}
    \AxiomC{$\judge{\tilde{\Delta}}{\idn{A(\mylist{c}')}{}\bigl(a\cdot\mylist{f},a\cdot(\mylist{f},\mylist{z})\bigr)}$}
    \noLine
    \UnaryInfC{$\judge{\Delta}{\rr(a\cdot\mylist{f}):\idn{A(\mylist{c'})}{}(a\cdot\mylist{f},a\cdot\mylist{f})}$}
    \UnaryInfC{$\Jelim{}^{\sigma}\bigl(\rr(a\cdot\mylist{f}),\mylist{d},\mylist{d'},\mylist{g}\bigr):\idn{A(\mylist{c'})}{}\bigl(a\cdot\mylist{f},a\cdot(\mylist{f},\mylist{g})\bigr)$.}
  \end{prooftree}
  As such, we define
  $\tau_{\Gamma;\Delta;A}(\mylist{f},\mylist{g},h)=\bigl(\mylist{f},h\circ(a\dag(\mylist{f},\mylist{g}))\bigr)$
  where $h\circ a\dag(\mylist{f},\mylist{g})$ is the composite
  indicated in the following diagram:
  \begin{align*}
    \xy
    {\ar^{a\dagger(\mylist{f},\mylist{g})}(0,0)*+{a\cdot\mylist{f}};(30,0)*+{a\cdot(\mylist{f},\mylist{g})}};
    {\ar^{h}(30,0)*+{a\cdot(\mylist{f},\mylist{g})};(60,0)*+{a'}};
    \endxy
  \end{align*}
\end{description}

Clearly the term $a \dagger (\mylist{f},\mylist{g})$\/ is definitionally equal to 
the reflexivity on $a$\/ when $\mylist{f}$\/ and $\mylist{g}$\/ are reflexivities; if $h$\/ is
also a reflexivity then this implies that $\tau_{\Gamma;\Delta;A}(\mylist{f},\mylist{g},h)$\/ 
is one as well. 

To see that
composition is preserved, consider morphisms
$(\mylist{f},\mylist{g},h):(\mylist{c},\mylist{d},a)\to(\mylist{c'},\mylist{d'},a')$\/ and
$(\mylist{f'},\mylist{g'},h'):(\mylist{c'},\mylist{d'},a')\to(\mylist{c''},\mylist{d''},a'')$.
First, we have 
\[ \bigl(\mylist{f'},\mylist{g'},h'\bigr)\bigl(\mylist{f},\mylist{g},h\bigr)=
\bigl(\mylist{f'f},\mylist{g'}\circ_{\mylist{f},\mylist{f'}} \mylist{g}), 
h' \circ_{(\mylist{f},\mylist{g}),(\mylist{f'},\mylist{g'})} h\bigr).\]
Next, observe the coherence isomorphism
\[ \gamma((\mylist{f},\mylist{g}),(\mylist{f'},\mylist{g'})): 
a \cdot (\mylist{f'}\mylist{f},\mylist{g}\circ_{\mylist{f},\mylist{f'}} \mylist{g})) \to 
 a \cdot (\mylist{f},\mylist{g}) \cdot (\mylist{f'},\mylist{g'})\]
makes the following diagram commute
\begin{equation}\label{eq:alpha}
\xymatrix{
a \cdot (\mylist{f'f}) \ar[rr]^{\gamma(\mylist{f},\mylist{f'})} 
\ar[d]_{a \dagger(\mylist{f'f},\mylist{g'} \circ_{\mylist{f},\mylist{f'}} \mylist{g})}
&&
a \cdot \mylist{f} \cdot \mylist{f'} 
\ar[rr]^{(a \cdot \mylist{f}) \dagger(\mylist{f'},\mylist{g'})}
&& 
a \cdot \mylist{f} \cdot (\mylist{f'},\mylist{g'})
  \ar[d]^{(a\dagger(\mylist{f},\mylist{g}))\cdot (\mylist{f'},\mylist{g'})}\\
a \cdot (\mylist{f'f},\mylist{g'} \circ_{\mylist{f},\mylist{f'}} \mylist{g}) 
\ar[rrrr]_{\gamma((\mylist{f},\mylist{g}),(\mylist{f'},\mylist{g'}))}
&&&&
 a \cdot (\mylist{f},\mylist{g})\cdot (\mylist{f'},\mylist{g'})
}
\end{equation}
(To see this, note that the diagram commutes on the nose when $(\mylist{f'},\mylist{g'})$\/ is
the identity; thus there is a sequential $\Jelim{}$-term witnessing the two composites are isomorphic,
and hence equal by 1-truncation.)

Now consider the following diagram:
\[
\xymatrix{
a \cdot (\mylist{f'f}) \ar[r]^{\gamma(\mylist{f},\mylist{f'})} &
a \cdot \mylist{f} \cdot \mylist{f'} \ar[rr]^-{a \dagger(\mylist{f},\mylist{g})\cdot \mylist{f'}}
\ar[d]_{(a \cdot \mylist{f}) \dagger(\mylist{f'},\mylist{g'})}
&& a \cdot (\mylist{f},\mylist{g})\cdot \mylist{f'} \ar[rr]^-{h \cdot \mylist{f'}}
\ar[d]|{(a \cdot (\mylist{f},\mylist{g})) \dagger(\mylist{f'},\mylist{g'}) }
&& a' \cdot \mylist{f'} \ar[d]^{a' \dagger(\mylist{f'},\mylist{g'})} \\
& a \cdot \mylist{f} \cdot (\mylist{f'},\mylist{g'}) 
 \ar[rr]_-{(a\dagger(\mylist{f},\mylist{g}))\cdot (\mylist{f'},\mylist{g'})} 
&& a \cdot (\mylist{f},\mylist{g})\cdot (\mylist{f'},\mylist{g'})
\ar[rr]_-{h \cdot (\mylist{f'},\mylist{g'})} 
&& a' \cdot (\mylist{f'},\mylist{g'}) \ar[r]_-{h'} & a''
}
\]
The two squares commute by naturality of the operation $-\dagger (\mylist{f'},\mylist{g'})$\/ 
(which is established by the same standard argument again). 
Now one way around the diagram corresponds to the composite
\begin{eqnarray*}
 \tau_{\Gamma;\Delta;A}\bigl(\mylist{f'},\mylist{g'},h'\bigr)
\circ \tau_{\Gamma;\Delta;A}\bigl(\mylist{f},\mylist{g},h\bigr) & = &
\bigl(\mylist{f'},h'\circ a'\dagger(\mylist{f'},\mylist{g'})\bigr)
\bigl(\mylist{f},h\circ a'\dagger(\mylist{f},\mylist{g})\bigr)\\
&=&
\bigl(\mylist{f'f},h' \circ a\dagger(\mylist{f'},\mylist{g'}) 
\circ ((h \circ a \dagger(\mylist{f},\mylist{g})) \cdot \mylist{f'}) \circ \gamma(\mylist{f},\mylist{f'}) \bigr)\\
& = & 
\bigl(\mylist{f'f},h' \circ
a\dagger(\mylist{f'},\mylist{g'}) \circ (h \cdot \mylist{f'}) 
\circ (a \dagger(\mylist{f},\mylist{g})\cdot \mylist{f'})\circ \gamma(\mylist{f},\mylist{f'})\bigr)
\end{eqnarray*}

Moreover, the following calculation shows that the other way around the diagram corresponds
to $\tau_{\Gamma;\Delta;A}\bigl((\mylist{f'},\mylist{g'},h') \circ (\mylist{f},\mylist{g},h)  \bigr)$:

\begin{eqnarray*}
\tau_{\Gamma;\Delta;A}\bigl((\mylist{f'},\mylist{g'},h') \circ (\mylist{f},\mylist{g},h)  \bigr)
& = & \tau_{\Gamma;\Delta;A}\bigl(\mylist{f'f},\mylist{g'}\circ_{\mylist{f},\mylist{f'})} \mylist{g}),
h' \circ_{(\mylist{f},\mylist{g}),(\mylist{f'},\mylist{g'})}  h\bigr)\\
& = &  \bigl(\mylist{f'f}, h' \circ_{(\mylist{f},\mylist{g}),(\mylist{f'},\mylist{g'})} h \circ 
a \dagger(\mylist{f'f},\mylist{g'}\circ_{\mylist{f}, \mylist{f'}} \mylist{g}) \bigr)\\
&=& 
\bigl(\mylist{f'f}, h' \circ (h \cdot (\mylist{f'},\mylist{g'})) \circ 
\gamma((\mylist{f},\mylist{g}),(\mylist{f'},\mylist{g'})) \circ
a \dagger(\mylist{f'f},\mylist{g'}\circ_{\mylist{f}, \mylist{f'}} \mylist{g}) \bigr) \\
& = & \bigl(\mylist{f'f}, h' \circ (h \cdot (\mylist{f'},\mylist{g'})) \circ
(a\dagger(\mylist{f},\mylist{g}))\cdot (\mylist{f'},\mylist{g'}) \circ 
(a \cdot \mylist{f}) \dagger(\mylist{f'},\mylist{g'}) \circ \gamma(\mylist{f},\mylist{f'})\bigr)\\
\end{eqnarray*}
where the last step uses the commutativity of~\eqref{eq:alpha}. 
This concludes the proof that $\tau$\/ is functorial.

We also observe:

\begin{lemma}\label{lem:weak_pb}
There is a pullback diagram
\begin{align}\label{eq:a_little_pullback}
  \xy
  {\ar^{\tau_{\Gamma;\Delta;A}}(0,15)*+{|\Gamma,\Delta,x:A|};(30,15)*+{|\Gamma,x:A|}};
  {\ar(0,15)*+{|\Gamma,\Delta,x:A|};(0,0)*+{|\Gamma,\Delta|}};
  {\ar(0,0)*+{|\Gamma,\Delta|};(30,0)*+{|\Gamma|.}};
  {\ar(30,15)*+{|\Gamma,x:A|};(30,0)*+{|\Gamma|.}};
  \endxy
\end{align}
\end{lemma}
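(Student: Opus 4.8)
The plan is to prove the pullback property directly, by comparing $|\Gamma,\Delta,x:A|$ with the strict pullback of the cospan in $\groupoids$ and checking that the induced comparison functor is bijective on objects and fully faithful. First I would fix names for the three unlabelled maps of~\eqref{eq:a_little_pullback}: write $q:|\Gamma,\Delta,x:A|\to|\Gamma,\Delta|$ for the left-hand map, and $w:|\Gamma,\Delta|\to|\Gamma|$ and $\pi:|\Gamma,x:A|\to|\Gamma|$ for the bottom and right maps. Each is the evident fibration coming from the Grothendieck construction, forgetting the portion of the context absent from the target (so $q$ forgets $x:A$, $w$ forgets $\Delta$, and $\pi$ forgets $x:A$). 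Commutativity of the square is then immediate: on objects both composites send $(\mylist{c},\mylist{d},a)$ to $\mylist{c}$, and on arrows $\pi\bigl(\tau_{\Gamma;\Delta;A}(\mylist{f},\mylist{g},h)\bigr)=\mylist{f}=w\bigl(q(\mylist{f},\mylist{g},h)\bigr)$ directly from the definition of $\tau_{\Gamma;\Delta;A}$.

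Next I would describe the strict pullback $P$ of $w$ and $\pi$ explicitly. An object of $P$ is a pair consisting of an object $(\mylist{c},\mylist{d})$ of $|\Gamma,\Delta|$ and an object $(\mylist{c}',a)$ of $|\Gamma,x:A|$ with $\mylist{c}=\mylist{c}'$, hence exactly a triple $(\mylist{c},\mylist{d},a)$; so $P$ and $|\Gamma,\Delta,x:A|$ have literally the same object set, and the canonical comparison functor $K=\langle q,\tau_{\Gamma;\Delta;A}\rangle:|\Gamma,\Delta,x:A|\to P$ (which exists and is a functor by the universal property, since $q$ and $\tau_{\Gamma;\Delta;A}$ have already been shown to be functors making the square commute) is the identity on objects. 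An arrow of $P$ from $(\mylist{c},\mylist{d},a)$ to $(\mylist{c}',\mylist{d}',a')$ is a pair $\bigl((\mylist{f},\mylist{g}),(\mylist{f}',h')\bigr)$ subject to $w(\mylist{f},\mylist{g})=\pi(\mylist{f}',h')$, i.e. $\mylist{f}=\mylist{f}'$; thus an arrow of $P$ is precisely a triple $(\mylist{f},\mylist{g},h')$ with $(\mylist{f},\mylist{g}):(\mylist{c},\mylist{d})\to(\mylist{c}',\mylist{d}')$ and $h':a\cdot\mylist{f}\to a'$ an arrow of $|A(\mylist{c}')|$.

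The crux is the computation of $K$ on arrows. By definition of $\tau_{\Gamma;\Delta;A}$, the functor $K$ sends an arrow $(\mylist{f},\mylist{g},h)$ of $|\Gamma,\Delta,x:A|$, where $h:a\cdot(\mylist{f},\mylist{g})\to a'$, to the arrow $\bigl(\mylist{f},\mylist{g},\,h\circ (a\dagger(\mylist{f},\mylist{g}))\bigr)$ of $P$. The key observation is that $a\dagger(\mylist{f},\mylist{g}):a\cdot\mylist{f}\to a\cdot(\mylist{f},\mylist{g})$ is an arrow in the groupoid $|A(\mylist{c}')|$ and hence invertible; therefore, for fixed $(\mylist{f},\mylist{g})$, the assignment $h\mapsto h\circ (a\dagger(\mylist{f},\mylist{g}))$ is a bijection from $\{h:a\cdot(\mylist{f},\mylist{g})\to a'\}$ onto $\{h':a\cdot\mylist{f}\to a'\}$, with inverse $h'\mapsto h'\circ (a\dagger(\mylist{f},\mylist{g}))^{-1}$. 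Hence $K$ is a bijection on each hom-set, i.e. fully faithful.

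Since $K$ is bijective on objects and fully faithful, it is an isomorphism of groupoids, which is exactly the assertion that~\eqref{eq:a_little_pullback} is a pullback. I expect no deep obstacle here; the only thing requiring care is the bookkeeping in matching arrow-sets, namely keeping track that the domain object recorded in $P$ is $a\cdot\mylist{f}$ while the domain recorded in $|\Gamma,\Delta,x:A|$ is $a\cdot(\mylist{f},\mylist{g})$, and recognising that precisely this discrepancy is absorbed by the invertible comparison arrow $a\dagger(\mylist{f},\mylist{g})$.
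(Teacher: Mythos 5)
Your proof is correct and is essentially the same argument as the paper's: the paper verifies the universal property directly by constructing the mediating functor out of an arbitrary cone, sending an arrow to $(\mylist{f},\mylist{g},h\circ(a\dagger(\mylist{f},\mylist{g}))^{-1})$, which is exactly the inverse of your hom-set bijection. Both proofs turn on the single observation that $a\dagger(\mylist{f},\mylist{g})$ is invertible and absorbs the discrepancy between $a\cdot\mylist{f}$ and $a\cdot(\mylist{f},\mylist{g})$; your repackaging via the comparison functor to the strict pullback is just a different phrasing of the same verification.
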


\begin{proof}
This is straightforward: given a groupoid $\mathcal{H}$\/ and functors
$L:\mathcal{H} \to |\Gamma,\Delta|$, $M:\mathcal{H} \to |\Gamma,x:A|$\/ making
the outer diagram commute, we define $K:\mathcal{H} \to |\Gamma,\Delta,x:A|$\/ by
$K(u)=(\mylist{c},\mylist{d},a)$, where $L(u)=(\mylist{c},\mylist{d})$\/ and 
$M(u)=(\mylist{c},a)$. For an arrow $l:u \to v$\/ in $\mathcal{H}$, define
\[ K(l)=(\mylist{f},\mylist{g},h \circ (a \dagger(\mylist{f},\mylist{g}))^{-1})\]
where $L(l)=(\mylist{f},\mylist{g})$\/ and $M(l)=(\mylist{f},h)$.
\end{proof}

\subsection{Generalized form}\label{sec:gen_weak}

We shall have the need for a slightly more general form of weakening than the one described above,
namely:

\begin{prooftree}
  \AxiomC{$\judge{\Gamma, \Theta}{}$}
  \AxiomC{$\judge{\Delta}{}$}
  \BinaryInfC{$\judge{\Gamma,\Delta,\Theta}{}$}
\end{prooftree}

In this situation, we wish to define a functor 
\[ \tau_{\Gamma;\Delta;\Theta}:|\Gamma,\Delta,\Theta| \to |\Gamma,\Theta|\]
which fits into a pullback square
\begin{align}\label{eq:a_bigger_pullback}
  \xy
  {\ar^{\tau_{\Gamma;\Delta;\Theta}}(0,15)*+{|\Gamma,\Delta,\Theta|};(30,15)*+{|\Gamma,\Theta|}};
  {\ar(0,15)*+{|\Gamma,\Delta,\Theta|};(0,0)*+{|\Gamma,\Delta|}};
  {\ar(0,0)*+{|\Gamma,\Delta|};(30,0)*+{|\Gamma|.}};
  {\ar(30,15)*+{|\Gamma,\Theta|};(30,0)*+{|\Gamma|.}};
  \endxy
\end{align}
This is done by induction on the length of $\Theta$, where the base case has been addressed
in the previous section. 
For the inductive case, we assume that we have defined 
$\tau_{\Gamma;\Delta;\Theta}:|\Gamma,\Delta,\Theta| \to |\Gamma,\Theta|$\/ 
(fitting into the appropriate pullback square) and now need to define 
$\tau_{\Gamma;\Delta;\Theta,A}:|\Gamma,\Delta,\Theta,x:A| \to |\Gamma,\Theta,x:A|$.
This is done, \emph{mutatis mutandis}, in much the same way as the simpler case in the previous 
section: on objects, the functor sends an object 
$(\mylist{c},\mylist{d},\mylist{e},a)$\/ to $(\mylist{c},\mylist{e},a)$.
For a morphism $(\mylist{f},\mylist{g},\mylist{k},h):(\mylist{c},\mylist{d},\mylist{e},a)
\to (\mylist{c}',\mylist{d}',\mylist{e}',a')$, we note first that by inductive hypothesis we are given
\[ \mylist{e}\dagger (\mylist{f},\mylist{g}): \mylist{e} \cdot \mylist{f} \to \mylist{e}\cdot (\mylist{f},\mylist{g}).\] 
Then there is a canonical term
\[ a \dagger (\mylist{f},\mylist{g},\mylist{k}): a \cdot (\mylist{f},\mylist{k}\cdot \mylist{e}\dagger(\mylist{f},\mylist{g})) \to
a \cdot (\mylist{f},\mylist{g},\mylist{k})\]

(which is defined by an appropriate $\Jelim{}$-term).
Then the action of $\tau_{\Gamma;\Delta;\Theta,A}:|\Gamma,\Delta,\Theta,x:A| \to |\Gamma,\Theta,x:A|$\/ on the morphism $(\mylist{f},\mylist{g},\mylist{k},h)$\/ is defined to be
$(\mylist{f},\mylist{k} \cdot \mylist{e}\dagger(\mylist{f},\mylist{g}),h \cdot a \dagger
(\mylist{f},\mylist{g},\mylist{k}))$, and we obtain a pullback square
\[
\xymatrix{
|\Gamma,\Delta,\Theta,x:A| \ar[r] \ar[d] & |\Gamma,\Theta,x:A| \ar[d] \\
|\Gamma,\Delta,\Theta| \ar[r] & |\Gamma,\Theta|
}
\]
The result then follows from elementary properties of pullbacks.
We note for future reference the following special case:

\begin{lemma}\label{lem:ctxt_weak}
Let $\Gamma$\/ and $\Delta$\/ be valid contexts, and let $\Gamma \vdash A$\/ be
a type judgement. Then the weakening functors $|\Delta,\Gamma| \to |\Gamma|$\/ and
$|\Delta,\Gamma,x:A| \to |\Gamma,x:A|$\/ fit into a pullback diagram
\[
\xymatrix{ 
|\Delta,\Gamma,x:A| \ar[r]\ar[d] & |\Gamma,x:A| \ar[d] \\
|\Delta,\Gamma| \ar[r] & |\Gamma|
}
\]
\end{lemma}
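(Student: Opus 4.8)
The plan is to exhibit the asserted square as the top square in a vertical pasting of two pullbacks, each an instance of the generalized weakening pullback~\eqref{eq:a_bigger_pullback}, and then to invoke the pasting law for pullbacks. The essential point is that the weakening functors in the statement \emph{prepend} the context $\Delta$, so they arise from the construction of Section~\ref{sec:gen_weak} upon taking the \emph{outer} context there to be empty; the two resulting pullbacks then live over the terminal groupoid $|()| = 1$, and the pasting argument serves precisely to replace this terminal base by $|\Gamma|$.

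First I would apply~\eqref{eq:a_bigger_pullback} with outer context $()$, inserted context $\Delta$, and trailing context $\Theta = \Gamma$, obtaining the pullback
\[
\xymatrix{
|\Delta,\Gamma| \ar[r] \ar[d] & |\Gamma| \ar[d] \\
|\Delta| \ar[r] & 1
}
\]
whose horizontal arrows are the weakening functors that drop the leading $\Delta$. Next I would apply~\eqref{eq:a_bigger_pullback} once more, this time with trailing context $\Theta = \Gamma, x:A$ (legitimate since $\judge{\Gamma}{A}$), obtaining a second pullback
\[
\xymatrix{
|\Delta,\Gamma,x:A| \ar[r] \ar[d] & |\Gamma,x:A| \ar[d] \\
|\Delta| \ar[r] & 1
}
\]
whose vertical arrows are the projections dropping the whole of $\Gamma, x:A$.

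I would then stack the square in the statement on top of the first pullback:
\[
\xymatrix{
|\Delta,\Gamma,x:A| \ar[r] \ar[d] & |\Gamma,x:A| \ar[d] \\
|\Delta,\Gamma| \ar[r] \ar[d] & |\Gamma| \ar[d] \\
|\Delta| \ar[r] & 1
}
\]
Since the projection dropping $x:A$ followed by the projection dropping $\Gamma$ coincides with the single projection dropping $\Gamma, x:A$ --- these being the structural projections of the iterated Grothendieck construction defining the context groupoids --- the outer rectangle of this diagram is exactly the second pullback above. Thus both the bottom square and the outer rectangle are pullbacks, and the pasting law for pullbacks yields that the top square, which is the square of the statement, is a pullback as well. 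I expect the only delicate point to be the verification that the two instances of~\eqref{eq:a_bigger_pullback} genuinely share their vertical composites, i.e. that the relevant projection functors compose on the nose; this, however, is immediate from the definition of $|\Gamma, x:A|$ as $\int|\judge{\Gamma}{A}|$ together with the corresponding description of its fibration over $|\Gamma|$.
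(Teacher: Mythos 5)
Your argument is correct, but it runs in the opposite direction from the paper's. The paper obtains the generalized pullback~\eqref{eq:a_bigger_pullback} by an induction on $\Theta$ whose inductive step \emph{directly} exhibits the square
$|\Gamma,\Delta,\Theta,x:A| \to |\Gamma,\Theta,x:A|$ over $|\Gamma,\Delta,\Theta| \to |\Gamma,\Theta|$
as a pullback (via the explicit formula for $\tau_{\Gamma;\Delta;\Theta,A}$ on morphisms), and Lemma~\ref{lem:ctxt_weak} is then read off as the instance of that intermediate square in which the outer context is empty and $\Theta$ is renamed $\Gamma$; no pasting is needed for the lemma itself. You instead take only the \emph{final output} of the induction --- two instances of~\eqref{eq:a_bigger_pullback} over the terminal base $|()|=1$, with trailing contexts $\Gamma$ and $\Gamma,x:A$ --- and recover the intermediate square by the converse pasting law (outer rectangle and bottom square pullbacks imply top square pullback). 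This is a legitimate alternative: it has the advantage of relying only on the officially stated conclusion of Section~\ref{sec:gen_weak} rather than on unpacking its inductive step, at the price of the extra compatibility checks you correctly flag, namely that the statement's square commutes and that the Grothendieck-construction projections compose strictly, so that the outer rectangle of your stacked diagram really is the second instance of~\eqref{eq:a_bigger_pullback}. Both of these checks are immediate from the explicit object/arrow formulas for the weakening functors (the projection simply discards the last component of $\tau_{();\Delta;\Gamma,A}(\mylist{f},\mylist{g},h)$, leaving $\tau_{();\Delta;\Gamma}(\mylist{f},\mylist{g})$), so your proof goes through.
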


We remark that the substitution functors satisfy a coherence principle: any diagram built up out
of these functors will automatically be commutative. On objects, this is virtually immediate from
the definition of these functors, while on arrows it is a consequence of 1-truncation.

\subsection{Substitution}

We next introduce a family of functors which form the categorical counterpart of substitution.
Consider a substitution instance

\begin{prooftree}
  \AxiomC{$\judge{\Gamma}{a:A}$}
  \AxiomC{$\judge{\Gamma,x:A,\Delta}{}$}
  \BinaryInfC{$\judge{\Gamma,\Delta[a/x]}{}$.}
\end{prooftree}
Given this data, we will define a functor
\begin{equation}\label{eq:sigma}
 \sigma_{\Gamma;a;\Delta}:|\Gamma,\Delta[a/x]|\to|\Gamma,x:A,\Delta|.
\end{equation}

To this end, we first recall from Section~\ref{subsec:terms} that the judgement 
$\judge{\Gamma}{a:A}$\/ gives a functor $|a|:|\Gamma| \to |\Gamma,x:A|$\/ which 
sends an object $\mylist{c}$\/ to $(\mylist{c},a(\mylist{c})$\/ and which acts on morphisms
by sending $\mylist{h}:\mylist{c} \to \mylist{d}$\/ to 
$(\mylist{h},a|\mylist{h})$, where $a|\mylist{h}:a(\mylist{c})\cdot \mylist{h} \to a(\mylist{d})$.

Therefore to define the functor $\sigma_{\Gamma;a;\Delta}$, we set
\[ \sigma_{\Gamma;a;\Delta}(\mylist{c},\mylist{d}) =_{\textrm{def}} (\mylist{c},a(\mylist{c}),\mylist{d}).\]

To define the action on morphisms, we proceed by induction on the length of the context,
first assuming that $\Delta=(v_1:B_1)$. Consider a morphism 
$(\mylist{f},g):(\mylist{c},d_1) \to (\mylist{c'},d_1')$.
Of course the first two components of 
$\sigma_{\Gamma;a;\Delta}(\mylist{f},g)$\/ should be $\mylist{f}$\/ and 
$a|\mylist{f}$. The third component should thus be of the form $d_1 \cdot 
(\mylist{f},a|\mylist{f}) \to d_1'$.

Now $d_1$\/  is an object of $|B_1(\mylist{c},a(\mylist{c}))|$, so that
$d_1 \cdot (\mylist{f},a|\mylist{f})$\/ is an object of $|B_1(\mylist{c'},a(\mylist{c'}))|$.
Note first that there is a comparison isomorphism
\[ d_1 \ddagger \mylist{f}: d_1\cdot (\mylist{f},a|\mylist{f}) \to d_1 \cdot \mylist{f}\]
in $|B_1(\mylist{c'},a(\mylist{c'}))|$, arising by taking the parameterized
sequential $\Jelim{}$-term

\begin{prooftree}
\AxiomC{$ \judge{\tilde{\Gamma},u_1:B_1(\mylist{x},a(\mylist{x}))}
{\idn{B_1(\mylist{x},a(\mylist{x}))}{}(u_1\cdot (\mylist{z},a|\mylist{z}),u_1 \cdot \mylist{z} )  }   $}
\noLine
\UnaryInfC{$\judge{\Gamma,u_1:B_1(\mylist{x},a(\mylist{x}))}
{\rr(u_1):\idn{B_1(\mylist{x},a(\mylist{x}))}{}(u_1,u_1)}  $}
\noLine
\UnaryInfC{$ \mylist{f}:\mylist{c} \to \mylist{c'} $}
\UnaryInfC{$\judge{u_1:  B_1(\mylist{c},a(\mylist{c}))}{\Jelim{}^\sigma(\rr(u_1),\mylist{c},
\mylist{c'},\mylist{f},u_1):
\idn{B_1(\mylist{c},a(\mylist{c}))}{}(u_1\cdot (\mylist{f},a|\mylist{f}),u_1 \cdot \mylist{f} )} $}
\end{prooftree}
(where we schematically denote the variables in $\tilde{\Gamma}$\/ by $\mylist{x},
\mylist{y}$\/ and $\mylist{z}$) and setting $u_1=_{\textrm{def}}d_1$.

\begin{lemma}
The family of morphisms $-\ddagger \mylist{f}$\/ is natural in its first argument.
\end{lemma}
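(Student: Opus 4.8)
The plan is to follow the standard pattern used throughout the preceding weakening and substitution arguments: check that the relevant square commutes on the nose when $\mylist{f}$ is a reflexivity, and then promote this to the general case by exhibiting a sequential $\Jelim{}$-term witnessing a higher identity between the two composites, which $1$-truncation collapses to a definitional equality.

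First I would make the naturality statement explicit. Fixing $\mylist{f}:\mylist{c}\to\mylist{c'}$ in $|\Gamma|$, the assignments $d_1\mapsto d_1\cdot(\mylist{f},a|\mylist{f})$ and $d_1\mapsto d_1\cdot\mylist{f}$ are both functors $|B_1(\mylist{c},a(\mylist{c}))|\to|B_1(\mylist{c'},a(\mylist{c'}))|$; the former is the action of $|\judge{\Gamma,x:A}{B_1}|$ on the arrow $(\mylist{f},a|\mylist{f})=|a|(\mylist{f})$, while the latter is the action of $|\judge{\Gamma}{B_1[a/x]}|$ on $\mylist{f}$. Naturality of $-\ddagger\mylist{f}$ in its first argument is then the claim that for every arrow $e:d_1\to d_1'$ in $|B_1(\mylist{c},a(\mylist{c}))|$ the square
\[
\xymatrix{
d_1\cdot(\mylist{f},a|\mylist{f}) \ar[r]^-{d_1\ddagger\mylist{f}} \ar[d]_{e\cdot(\mylist{f},a|\mylist{f})} & d_1\cdot\mylist{f} \ar[d]^{e\cdot\mylist{f}} \\
d_1'\cdot(\mylist{f},a|\mylist{f}) \ar[r]_-{d_1'\ddagger\mylist{f}} & d_1'\cdot\mylist{f}
}
\]
commutes, i.e. that the two composites $(e\cdot\mylist{f})\circ(d_1\ddagger\mylist{f})$ and $(d_1'\ddagger\mylist{f})\circ(e\cdot(\mylist{f},a|\mylist{f}))$ are definitionally equal morphisms $d_1\cdot(\mylist{f},a|\mylist{f})\to d_1'\cdot\mylist{f}$, equivalently definitionally equal terms of $\idn{B_1(\mylist{c'},a(\mylist{c'}))}{}\bigl(d_1\cdot(\mylist{f},a|\mylist{f}),d_1'\cdot\mylist{f}\bigr)$.

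Next I would settle the reflexivity case. When $\mylist{f}=\rr(\mylist{c})$ we have $a|\mylist{f}=\rr(a(\mylist{c}))$, so $(\mylist{f},a|\mylist{f})$ is the identity of $(\mylist{c},a(\mylist{c}))$; consequently $d_1\cdot(\mylist{f},a|\mylist{f})=d_1=d_1\cdot\mylist{f}$ definitionally, and by the conversion rule for sequential $\Jelim{}$-terms $d_1\ddagger\mylist{f}=\rr(d_1)$ (and likewise for $d_1'$). Both vertical maps then reduce to $e$ and both horizontal maps to reflexivities, so each composite is definitionally equal to $e$ and the square commutes on the nose.

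Finally, for general $\mylist{f}$ I would parameterize the two composites over the telescope $(\mylist{x},\mylist{y},\mylist{z})$ presenting $\mylist{f}$ (together with $d_1,d_1',e$), regard them as two parallel terms of the displayed identity type, and build a parameterized sequential $\Jelim{}$-term exhibiting a higher identity proof between them — whose base case is exactly the definitional agreement established in the previous step under the substitution $[\mylist{x}/\mylist{y},\rr(\mylist{x})/\mylist{z}]$. The $1$-truncation axiom then forces the two composites to be definitionally equal, which is the asserted naturality. The step I expect to demand the most care is the bookkeeping for this $\Jelim{}$-term: one must confirm that both composites genuinely reduce to the common term $e$ after substituting reflexivities for $\mylist{z}$, so that $\rr(e)$ is a legitimate base case; once this reduction is in hand, the appeal to $1$-truncation is routine, exactly as in the weakening computations above.
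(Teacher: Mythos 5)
Your proposal is correct and follows exactly the paper's argument: the paper likewise observes that the naturality square commutes on the nose in the reflexivity case, promotes this to a propositional equality via a sequential $\Jelim{}$-term, and concludes by 1-truncation. The only difference is that the paper compresses these steps into a one-line appeal to the "standard argument," whereas you spell out the base-case reduction explicitly.
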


\begin{proof}
Given $g_1:d_1 \to d_1'$, we note that there is a propositional equality in the naturality square
\[
\xymatrix{
d_1 \cdot (\mylist{f},a|\mylist{f}) \ar[rr]^{g_1 \cdot  (\mylist{f},a|\mylist{f}) } 
\ar[d]_{d_1 \ddagger \mylist{f}} && d_1' \cdot  (\mylist{f},a|\mylist{f}) 
\ar[d]^{d_1' \ddagger \mylist{f}} \\
d_1 \cdot \mylist{f} \ar[rr]_{g_1 \cdot \mylist{f}} && d_1' \cdot \mylist{f}
}
\]
Thus by 1-truncation, the square commutes.
\end{proof}

We now set $\sigma_{\Gamma;a;\Delta}(\mylist{f},g_1)=(\mylist{f},a|\mylist{f},
g_1 \circ d_1 \ddagger \mylist{f})$, where the last component is 
the composite
\[ 
\xymatrix{
d_1 \cdot (\mylist{f},a|\mylist{f}) \ar[r]^-{d_1 \ddagger \mylist{f}} 
& d_1 \cdot \mylist{f} \ar[r]^-{g_1} & d_1'
}
\]

\begin{lemma}
As defined, $\sigma_{\Gamma;a;\Delta}:|\Gamma,\Delta[a/x]|\to|\Gamma,x:A,\Delta|$\/
is functorial.
\end{lemma}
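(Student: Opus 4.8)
The plan is to verify the two functoriality axioms in turn, following the template already used above for the weakening functor $\tau$: settle the object- and base-level data by direct computation, and reduce every remaining fibrewise equation of $2$-cells to a definitional equality by the standard ``reflexivity case, then sequential $\Jelim{}$-term, then $1$-truncation'' argument.

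First I would check preservation of identities. The identity on $(\mylist{c},d_1)$ in $|\Gamma,\Delta[a/x]|$ is $(1_{\mylist{c}},1_{d_1})$, so $\sigma_{\Gamma;a;\Delta}(1_{\mylist{c}},1_{d_1})=(1_{\mylist{c}},a|1_{\mylist{c}},\,1_{d_1}\circ d_1\ddagger 1_{\mylist{c}})$. By the conversion rule for sequential $\Jelim{}$-terms, $a|1_{\mylist{c}}=\rr(a(\mylist{c}))$; and since $d_1\cdot(1_{\mylist{c}},a|1_{\mylist{c}})=d_1\cdot 1_{\mylist{c}}=d_1$, the same conversion rule forces the defining $\Jelim{}$-term of $d_1\ddagger 1_{\mylist{c}}$ to reduce to $\rr(d_1)$. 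Hence $\sigma_{\Gamma;a;\Delta}(1_{\mylist{c}},1_{d_1})$ is exactly the identity on $(\mylist{c},a(\mylist{c}),d_1)$ in $|\Gamma,x:A,\Delta|$.

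Next, for composition I would take composable arrows $(\mylist{f},g_1):(\mylist{c},d_1)\to(\mylist{c}',d_1')$ and $(\mylist{f}',g_1'):(\mylist{c}',d_1')\to(\mylist{c}'',d_1'')$ and compare both sides componentwise using the explicit composition formula for a single context extension recorded above. The first two components of $\sigma_{\Gamma;a;\Delta}(\mylist{f}',g_1')\circ\sigma_{\Gamma;a;\Delta}(\mylist{f},g_1)$ are $\bigl(\mylist{f}'\mylist{f},\,(a|\mylist{f}')\circ_{\mylist{f},\mylist{f}'}(a|\mylist{f})\bigr)$, which equals $(\mylist{f}'\mylist{f},\,a|(\mylist{f}'\mylist{f}))$ precisely because $|a|$ was shown to be a functor; this matches the first two components of $\sigma_{\Gamma;a;\Delta}\bigl((\mylist{f}',g_1')\circ(\mylist{f},g_1)\bigr)$. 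It therefore remains only to compare the third, fibrewise components, each of which is a morphism $d_1\cdot(\mylist{f}'\mylist{f},a|(\mylist{f}'\mylist{f}))\to d_1''$ assembled from the given $g_1,g_1'$, the comparison maps $d_1\ddagger(-)$, and coherence isomorphisms $\gamma$.

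The crux, and the step I expect to be the main obstacle, is reconciling these two fibrewise composites: one is built from the coherence isomorphism $\gamma$ of the substituted family $B_1[a/x]$ over $|\Gamma|$ together with $d_1\ddagger(\mylist{f}'\mylist{f})$, while the other uses the coherence isomorphism $\gamma$ of $B_1$ over $|\Gamma,x:A|$ at the base arrows $(\mylist{f},a|\mylist{f})$ and $(\mylist{f}',a|\mylist{f}')$, together with $d_1'\ddagger\mylist{f}'$ and the reindexed $d_1\ddagger\mylist{f}$. To match them I would invoke the naturality of $-\ddagger\mylist{f}'$ just established (to slide $g_1$ past the comparison map), together with a coherence square entirely analogous to diagram~\eqref{eq:alpha} relating the two $\gamma$'s through the maps $d_1\ddagger(-)$. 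As in the weakening case, each such square commutes on the nose when $\mylist{f}'$ is a reflexivity, so a sequential $\Jelim{}$-term witnesses a propositional equality between the two composites in general, and $1$-truncation collapses this to the required definitional equality; since both composites are parallel by construction, this is exactly an instance of the coherence principle recorded after Lemma~\ref{lem:ctxt_weak}. Finally, the inductive step extending the construction to longer $\Delta$ is handled \emph{mutatis mutandis}, exactly as for the generalized weakening functor, so functoriality propagates along the induction.
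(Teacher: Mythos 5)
Your proposal follows essentially the same route as the paper's proof: the fibrewise component is the whole content, and you reduce it, exactly as the paper does, to the naturality of $-\ddagger\mylist{f}'$ plus a coherence square between the two $\gamma$'s and the $\ddagger$-maps that is checked in the reflexivity case and then closed up by a sequential $\Jelim{}$-term and 1-truncation. The only difference is that you also spell out preservation of identities via the conversion rule, which the paper leaves implicit; this is a harmless (indeed welcome) addition.
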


\begin{proof}
 Consider, in addition to $(\mylist{f},g_1):
(\mylist{c},d_1) \to (\mylist{c'},d_1')$, another morphism
$(\mylist{f'},g_1'):
(\mylist{c'},d_1') \to (\mylist{c''},d_1'')$, giving rise to a composite
$(\mylist{f'f},g_1'\circ_{\mylist{f},\mylist{f'}} g_1)$.

Consider the diagram
\[
\xymatrix{
d_1 \cdot (\mylist{f},a|\mylist{f}) \cdot (\mylist{f'},a|\mylist{f'})
\ar[rr]^-{d_1\ddagger \mylist{f} \cdot (\mylist{f'},a|\mylist{f'})} 
  &&
d_1 \cdot \mylist{f} \cdot (\mylist{f'},a|\mylist{f'}) 
\ar[rr]^{g_1 \cdot (\mylist{f'},a|\mylist{f'})}
\ar[d]|{(d_1 \cdot \mylist{f}) \ddagger \mylist{f'}}
 &&
d_1' \cdot (\mylist{f'},a|\mylist{f'})
\ar[d]^{d_1' \ddagger \mylist{f'}}
  \\
d_1 \cdot (\mylist{f'f},a| \mylist{f'f}) 
\ar[rr]_{d_1\ddagger \mylist{f'f}} \ar[u]^\gamma
  &&
d_1 \cdot \mylist{f} \cdot \mylist{f'}
\ar[rr]_{g_1 \cdot \mylist{f'}}
 &&
d_1'' \cdot \mylist{f'}
\ar[r]_{g_1'}
 & 
d_1''
}
\]
The vertical left-hand map is the coherence isomorphism for the action of $|\Gamma,x:A \vdash
\Delta|$. The right-hand square commutes by naturality of $\ddagger$. The left-hand square 
can also be seen to commute using the by now familiar argument: 
when $\mylist{f'}$\/ is a reflexivity, the square commutes,
and hence there is a propositional identity, and by 1-truncation, an identity between the two 
composites. 

The composite starting with $\gamma$, going along the top and then down is equal to 
$\sigma_{\Gamma;a;\Delta}(\mylist{f'},g_1') \circ 
\sigma_{\Gamma;a;\Delta}(\mylist{f},g_1)$, while the bottom composite
is $\sigma_{\Gamma;a;\Delta}(\mylist{f'f},g_1' \circ_{\mylist{f'},\mylist{f}} g_1)$.
\end{proof}

Next, we use the same construction (but with extra parameters $d_1,d_1',g_1$) to get 
a comparison morphism
\[ d_2 \ddagger (\mylist{f},g_1): d_2 \cdot (\mylist{f},a|\mylist{f},g_1 \circ (d_1 \ddagger 
\mylist{f})) \to d_2 \cdot (\mylist{f},g_1).\]
Thus we put $\sigma_{\Gamma;a;\Delta}(\mylist{f},g_1,g_2)=
(\mylist{f},a|\mylist{f}, g_1 \circ d_1 \ddagger \mylist{f}, g_2 \circ d_2 \ddagger (\mylist{f},g_1))$.
In general we have a comparison isomorphism
\[ d_{m+1} \ddagger (\mylist{f}, \mylist{g}): 
d_{m+1}\cdot (\mylist{f},a|\mylist{f}, \mylist{g} \circ 
(\mylist{d} \ddagger (\mylist{f},\mylist{g}))) \to d_{m+1} \cdot (\mylist{f},\mylist{g})\]
(where now the list $\mylist{g}=(g_1, \ldots, g_m)$) and therefore we put
\[ \sigma_{\Gamma;a;\Delta}(\mylist{f},\mylist{g}) =_{\textrm{def}}(\mylist{f},a|\mylist{f},
\mylist{g} \circ (\mylist{d} \ddagger (\mylist{f},\mylist{g}))). \]
In much the same manner as in the base case we can now show that $-\ddagger(\mylist{f},\mylist{g})$\/ 
is natural in its first argument, and that $\sigma_{\Gamma;a;\Delta}$, as defined, is functorial.

For future reference, we also collect the following fact about the substitution functors.

\begin{lemma}\label{lem:subst_pb}
The functor $\sigma_{\Gamma;a;\Delta}$\/ fits into a pullback square
\[
\xymatrix{
|\Gamma,\Delta[a/x]| \ar[r]^{\sigma_{\Gamma;a;\Delta}} \ar[d] 
& | \Gamma,x:A,\Delta| \ar[d] \\
|\Gamma| \ar[r]_-{|a|} & | \Gamma,x:A| 
}
\]
Moreover, given an extended context $\Gamma,x:A,\Delta,\Delta'$, we have a pullback square
\[
\xymatrix{
|\Gamma,\Delta[a/x],\Delta'[a/x]| \ar[rr]^-{\sigma_{\Gamma;a;\Delta,\Delta'}} \ar[d] 
&& |\Gamma,x:A,\Delta,\Delta'| \ar[d] \\
|\Gamma,\Delta[a/x]| \ar[rr]^-{\sigma_{\Gamma;a;\Delta}} && | \Gamma,x:A,\Delta|
}
\]
\end{lemma}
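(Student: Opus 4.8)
The plan is to prove the first square directly, by verifying the universal property of pullbacks in $\groupoids$, and then to obtain the second square formally by pasting. The argument for the first square runs exactly parallel to that of Lemma~\ref{lem:weak_pb}, with the comparison isomorphisms $-\ddagger(\mylist{f},\mylist{g})$ playing the role that $a\dagger(\mylist{f},\mylist{g})$ played there. First I would record that the square commutes on the nose: by construction $\sigma_{\Gamma;a;\Delta}$ sends $(\mylist{c},\mylist{d})$ to $(\mylist{c},a(\mylist{c}),\mylist{d})$ and $(\mylist{f},\mylist{g})$ to $(\mylist{f},a|\mylist{f},\mylist{g}\circ(\mylist{d}\ddagger(\mylist{f},\mylist{g})))$, so that projecting away the $\Delta$-components recovers exactly $|a|$.

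For the universal property, suppose given a groupoid $\mathcal{H}$ together with functors $L:\mathcal{H}\to|\Gamma|$ and $M:\mathcal{H}\to|\Gamma,x:A,\Delta|$ whose composites to $|\Gamma,x:A|$ agree. The commuting condition forces, for an object $u$ with $L(u)=\mylist{c}$, that $M(u)=(\mylist{c},a(\mylist{c}),\mylist{d})$ for a unique $\Delta$-datum $\mylist{d}$; and for an arrow $l$ with $L(l)=\mylist{f}$, that $M(l)=(\mylist{f},a|\mylist{f},\mylist{g})$ for a unique $\mylist{g}$. I would then define the mediating functor $K:\mathcal{H}\to|\Gamma,\Delta[a/x]|$ by $K(u)=(\mylist{c},\mylist{d})$ and
\[ K(l)=\bigl(\mylist{f},\mylist{g}\circ(\mylist{d}\ddagger(\mylist{f},\mylist{g}))^{-1}\bigr), \]
which makes sense precisely because each $\mylist{d}\ddagger(\mylist{f},\mylist{g})$ is an isomorphism. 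A short computation, in which the $\ddagger$ and its inverse cancel, shows $\sigma_{\Gamma;a;\Delta}\circ K=M$, while the identity $\text{(projection)}\circ K=L$ is immediate; functoriality of $K$ and its uniqueness both follow from the fact, implicit in the proof that $\sigma_{\Gamma;a;\Delta}$ is a functor, that $\sigma_{\Gamma;a;\Delta}$ restricts to an isomorphism on each fibre over $|\Gamma|$.

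For the second square I would argue by pasting rather than repeat the construction. Substitution distributes over context concatenation, $(\Delta,\Delta')[a/x]=\Delta[a/x],\Delta'[a/x]$, so the first part of the lemma applied to the context $\Delta,\Delta'$ shows that the rectangle obtained by stacking the second square on top of the first-part square for $\Delta$ is itself a pullback; the first part applied to $\Delta$ shows the bottom square of this stack is a pullback. The relevant verticals are the weakening functors, and the commutativity needed to form the stack follows from the coherence principle noted after Lemma~\ref{lem:ctxt_weak}. By the pasting lemma for pullbacks, the top square---which is exactly the second square of the statement---is then a pullback.

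The main obstacle is the arrow-level bookkeeping in the first square: checking that the assignment $l\mapsto K(l)$ is functorial and genuinely factors $M$ through $\sigma_{\Gamma;a;\Delta}$. This reduces entirely to the invertibility and naturality of the comparison isomorphisms $-\ddagger(\mylist{f},\mylist{g})$, together with their interaction with the coherence isomorphisms $\gamma$, which has already been established (up to 1-truncation) in the verification that $\sigma_{\Gamma;a;\Delta}$ is a functor; so no genuinely new calculation is required beyond invoking those facts.
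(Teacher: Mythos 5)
Your proposal is correct and follows essentially the same route as the paper: the first square is verified directly via the universal property, with the mediating functor $K(l)=\bigl(\mylist{f},\mylist{g}\circ(\mylist{d}\ddagger(\mylist{f},\mylist{g}))^{-1}\bigr)$ built from the inverses of the comparison isomorphisms (mirroring Lemma~\ref{lem:weak_pb}), and the second square is obtained by the pasting lemma from the first part applied to $\Delta,\Delta'$ and to $\Delta$. If anything, your explicit use of $(\mylist{d}\ddagger(\mylist{f},\mylist{g}))^{-1}$ is more careful than the formula displayed in the paper's own proof, which omits the inversion.
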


\begin{proof}
It is clear that the square commutes. Given a groupoid $\mathcal{H}$\/ and functors
$L: \mathcal{H} \to |\Gamma|$, $M:\mathcal{H} \to |\Gamma,x:A,\Delta|$\/ making
the outer diagram commute, define $K:\mathcal{H} \to |\Gamma,\Delta[a/x]|$\/ by
$K(u)=(\mylist{c},\mylist{d})$, where $L(u)=\mylist{c}$\/ and $M(u)=(\mylist{c},a(\mylist{c}),
\mylist{d})$.  For an arrow $l:u \to v$\/ in $\mathcal{H}$\/ define
\[ K(l) =_{\textrm{def}} 
\bigl(\mylist{f},a|\mylist{f}, \mylist{g} \circ (\mylist{d} \ddagger (\mylist{f},\mylist{g}))\bigr),\]
where
$L(u)=\mylist{f}$, and $K(u)=(\mylist{f},a|\mylist{f},\mylist{g})$.

The second claim is immediate from elementary properties of pullback squares.
\end{proof}

\subsection{Syntactical groupoid CwF}

Our last aim for this section is to organize the groupoids and pseudo-functors associated to contexts
and types into a genuine model of the type theory.

Write $\cont$\/ for the category of contexts and context morphisms. This is the underlying
category of a CwF, the syntactic model of the 1-truncated type theory (see~\cite{Hofmann:SSDT}).
 We shall use the constructions from the previous section to define a functor
$|-|:\cont \to \groupoids$.

To a context $\Gamma=(x_1:T_1,\ldots,x_k:T_k)$\/ 
we associate the syntactic groupoid $|\Gamma|$. Next suppose that we are given another context
 $\Delta$\/ and a context morphism $\mylist{m}:\Delta \to \Gamma$, 
i.e., we have a sequence of derivable term judgements
\begin{eqnarray*}
\Delta &\vdash& m_1:T_1 \\
\Delta &\vdash& m_2:T_2[m_1/x_1] \\
&\vdots&\\
\Delta &\vdash &m_k:T_n[m_1/x_1,\ldots, m_{k-1}/x_{k-1}].\\
\end{eqnarray*}

Then we wish to construct a functor
\[ |\mylist{m}|:|\Delta| \to |\Gamma|. \]
To this end, first note that the terms $m_i$\/ induce sections $|m_i|$\/ of the appropriate
projections. These fit together in the following manner (illustrated for the case $k=3$):
\[
\xymatrix{
|\Delta| \ar[r]^-{|m_3|} \ar[dr]_= & |\Delta,x_3:T_3[m_1,m_2]| \ar[d] \ar[r] &
|\Delta,x_2:T_2[m_1],x_3:T_3[m_1]| \ar[r] \ar[d] & |\Delta,x_1:T_1,x_2:T_2,x_3:T_3| \ar[d] \\
 & |\Delta| \ar[dr]_= \ar[r] ^{|m_2|}& |\Delta,x_2:T_2[m_1]| \ar[r] \ar[d] & |\Delta,x_1:T_1,x_2:T_2| \ar[d]\\
&& |\Delta| \ar[r]^{|m_1|} \ar[dr]_= & |\Delta,x_1:T_1|\ar[d]\\
&&& |\Delta|
}
\]
Here the squares are pullbacks (cf. Lemma~\ref{lem:subst_pb}). Generally, given 
$\mylist{m}:\Delta \to \Gamma$, we denote by $|\overline{\mylist{m}}|$\/ the composite

\[
\xymatrix{
|\Delta| \ar[d]^{|m_k|} \\
 |\Delta,x_k:T_k[m_1,\ldots, m_{k-1}]|
 \ar[d]^{\sigma_{\Delta;m_{k-1};T_k}} \\
 |\Delta,x_{k-1}:T_{k-1}[m_1, \ldots, m_{k-2}],x_k:T_k[m_1, \ldots,m_{k-2}]|
\ar[d] \\
 \vdots \ar[d] \\ 
|\Delta,\Gamma|\\}
\]
Then we define the functor $|\mylist{m}|$\/ associated to the context morphism $\mylist{m}$\/ to
be the composite
\[
\xymatrix{
|\Delta| \ar[r]^-{|\overline{\mylist{m}}|} 
& |\Delta,\Gamma| \ar[r]^{\tau_{\Delta;\Gamma}} & |\Gamma|
}
\]
where $\tau_{\Delta;\Gamma}$\/ is the weakening functor described in section~\ref{sec:gen_weak}.

Just as a single term $\Delta \vdash a:T$\/ induces substitution functors
$\sigma_{\Delta;a;\Theta}:|\Delta,\Theta[a/x]| \to |\Delta,x:A,\Theta|$, a context 
morphism $\mylist{m}:\Delta \to \Gamma$\/ also induces substitution functors
$\sigma_{\Delta;\mylist{m};\Theta}:|\Delta,\Theta[\mylist{m}/\mylist{x}]| \to 
|\Delta,\Gamma,\Theta|$\/ fitting into a pullback square
\[
\xymatrix{
|\Delta,\Theta[\mylist{m}]|\ar[rr]^-{ \sigma_{\Delta;\mylist{m};\Theta}} \ar[d] 
 && |\Delta,\Gamma,\Theta|\ar[d] \\
|\Delta| \ar[rr]_{\overline{\mylist{m}}} && |\Delta,\Gamma|
}
\] 
These functors can also be defined explicitly, in a manner very similar to that of the usual
substitution functors; the details are left to the reader.

\begin{proposition}
The above definitions give a functor $|-|:\cont \to \groupoids$.
\end{proposition}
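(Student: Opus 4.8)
The plan is to verify the three defining properties of a functor: that $|\mylist{m}|$ is well defined and is itself a functor, that identities are preserved, and that composition is preserved. Well-definedness is essentially automatic: $|\mylist{m}|$ is assembled as a composite of the sections $|m_i|$ (shown functorial in Section~\ref{subsec:terms}), the substitution functors $\sigma$, and the weakening functor $\tau_{\Delta;\Gamma}$ (each shown functorial in the preceding sections), so $|\mylist{m}|$ is a functor $|\Delta|\to|\Gamma|$; and since the objects and arrows of each syntactic groupoid are definitional-equality classes, each $|m_i|$, and hence $|\mylist{m}|$, depends only on the context morphism and not on a choice of derivation. On objects, moreover, all of identity- and composition-preservation are immediate, because on the level of objects every one of the functors $|\overline{\mylist{m}}|$, $\sigma$ and $\tau$ is given by literal syntactic substitution of terms, and syntactic substitution is strictly unital and associative; thus the genuine content lies in the behaviour on arrows, where the comparison isomorphisms $\gamma$, $-\dagger(\mylist{f},\mylist{g})$ and $-\ddagger(\mylist{f},\mylist{g})$ intervene.

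For preservation of identities I would observe that for $1_\Gamma:\Gamma\to\Gamma$ each component term is a variable $m_i=x_i$, so that all substitutions occurring in $|\overline{1_\Gamma}|$ are trivial and $|\overline{1_\Gamma}|:|\Gamma|\to|\Gamma,\Gamma|$ is the diagonal section $\mylist{c}\mapsto(\mylist{c},\mylist{c})$. Composing with the weakening functor $\tau_{\Gamma;\Gamma}:|\Gamma,\Gamma|\to|\Gamma|$ of Lemma~\ref{lem:ctxt_weak}, which projects onto the second copy, yields the identity on objects. On arrows, the relevant comparison terms $a\dagger(\mylist{f},\mylist{g})$ are reflexivities when the substituted terms are variables, so by the coherence principle recorded at the end of Section~\ref{sec:gen_weak} the composite $\tau_{\Gamma;\Gamma}\circ|\overline{1_\Gamma}|$ agrees with $1_{|\Gamma|}$ on arrows as well.

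The substantial step is preservation of composition. Given $\mylist{n}:\Theta\to\Delta$ and $\mylist{m}:\Delta\to\Gamma$ I must show $|\mylist{m}\circ\mylist{n}|=|\mylist{m}|\circ|\mylist{n}|$ as functors $|\Theta|\to|\Gamma|$. I would proceed by induction on the length of the target context $\Gamma$, using the two pullback squares of Lemma~\ref{lem:subst_pb} to rewrite the substitution functor attached to the composite morphism as the pasting of the substitution functors for $\mylist{n}$ and for $\mylist{m}$, and the pullback squares of Lemmas~\ref{lem:weak_pb} and~\ref{lem:ctxt_weak} to commute the intervening weakening functors past these substitutions. Pasting of pullback squares then identifies the two composites on objects outright, since both realize the syntactic substitution identity $t[\mylist{m}/\mylist{x}][\mylist{n}/\mylist{y}]=t[(\mylist{m}\circ\mylist{n})/\mylist{x}]$; on arrows the induced comparison isomorphisms agree by the coherence principle, i.e. the two candidate $\Jelim{}$-terms coincide up to a propositional identity and hence, by 1-truncation, definitionally.

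The main obstacle I anticipate is the bookkeeping in this last step: correctly tracking how the coherence isomorphisms $\gamma$ and the comparison maps $-\dagger$ and $-\ddagger$ propagate through the iterated composite of weakening and substitution functors defining $|\overline{\mylist{m}}|$, and checking that the two groupings assemble into the \emph{same} arrow rather than merely isomorphic ones. This is precisely the situation the coherence principle of Section~\ref{sec:gen_weak} is designed to handle: any diagram built from these canonical functors commutes on objects by construction and on arrows by 1-truncation, so once the claim has been reduced to such a diagram the verification is mechanical. I would therefore organize the argument so that the entire weight of the calculation rests on one application of this principle, keeping explicit manipulation of $\Jelim{}$-terms to a minimum.
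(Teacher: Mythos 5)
Your proposal is correct and follows essentially the same route as the paper: both reduce preservation of composition to the pullback squares for the substitution and weakening functors (Lemmas~\ref{lem:weak_pb}, \ref{lem:ctxt_weak}, \ref{lem:subst_pb}) and then discharge the remaining square mixing $\sigma_{\Theta;\mylist{n};\Gamma}$ with the weakening functors by the coherence/1-truncation principle. The only difference is cosmetic --- you spell out preservation of identities and phrase the composition step as an induction on the length of $\Gamma$, whereas the paper writes down a single explicit pasting diagram whose vertical composites are identities --- but the underlying argument is the same.
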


\begin{proof}
Given two composable context morphisms $\mylist{n}:\Theta \to \Delta$\/ and
$\mylist{m}:\Delta \to \Gamma$, we have, on the one hand, the composite functor
\[
\xymatrix{
|\Theta| \ar[r]^-{|\overline{\mylist{m}}|} & |\Theta,\Delta| \ar[r]^-{\tau_{\Theta;\Delta}}
& |\Delta| \ar[r]^-{|\overline{\mylist{n}}|} & |\Delta,\Gamma| \ar[r]^{\tau_{\Delta;\Gamma}} 
& |\Gamma|
}
\]

On the other hand, we have the functor associated to the composite context morphism
$\mylist{m}[\mylist{n}/\mylist{x}]$, which has the form
\[
\xymatrix{
|\Theta| \ar[rr]^-{|\mylist{m}[\mylist{n}/\mylist{x}]|} && |\Theta,\Gamma| 
\ar[r]^{\tau_{\Theta;\Gamma}} & |\Gamma|.
}
\]

In order to show that these are equal, consider the diagram
\[
\xymatrix{
|\Theta| \ar[r]^-{|\overline{\mylist{n}}|} 
\ar[d]_{|\overline{\mylist{m}[\mylist{n}/\mylist{x}]}|} &
|\Theta,\Delta| \ar[r]^-{\tau_{\Theta;\Delta}} \ar[d]^{|\overline{\mylist{m}}|} 
    & |\Delta| \ar[d]^{|\overline{\mylist{m}}|} \\
|\Theta,\Gamma|  \ar[r]^-{\sigma_{\Theta;\mylist{n};\Gamma}}  \ar[d]
& |\Theta,\Delta,\Gamma| \ar[r]^-{\tau_{\Theta;\Delta,\Gamma}}  \ar[d]
& |\Delta,\Gamma| \ar[r]^-{\tau_{\Delta;\Gamma}} \ar[d] & |\Gamma|\\
|\Theta| \ar[r]_{|\overline{\mylist{n}}|} & |\Theta,\Delta| \ar[r]_{\tau_{\Theta;\Delta}}
& |\Delta|
}
\]
(We have overloaded the notation $|\mylist{m}|$; the middle vertical arrow is the
weakened version.)
All squares are pullbacks by general properties of substitution and weakening functors.
Moreover, the vertical composites are identities. The result now follows from the commutativity
of 
\[
\xymatrix{
|\Theta,\Gamma| \ar[r]^-{\sigma_{\Theta;\mylist{n};\Gamma}}  \ar[d]_{\tau_{\Theta;\Gamma}}
& |\Theta,\Delta,\Gamma| \ar[d]^{\tau_{\Theta;\Delta,\Gamma}} \\
|\Gamma| & |\Delta,\Gamma| \ar[l]^{\tau_{\Delta;\Gamma}}
}
\]
which is a consequence of 1-truncation.
\end{proof}

We next use the functor $|-|:\cont \to \groupoids$\/ to transfer the additional CwF-structure
on the category $\cont$\/ of contexts to the category of groupoids; more precisely, we shall define
a CwF-structure on the image of the functor $|-|$.
We shall write $\synt$\/ for the image of the functor $|-|$. Its objects
and arrows will be referred to as \emph{syntactic groupoids}
 and \emph{syntactic functors}, respectively.

\begin{theorem}
The category $\synt$\/ is the underlying category of a CwF, and the functor
$|-|:\cont \to \synt \hookrightarrow \groupoids$\/ induces an isomorphism of CwFs.
\end{theorem}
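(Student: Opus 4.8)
The plan is to transport the CwF structure of $\cont$ to $\synt$ along the functor $|-|$ and then to observe that the transport exhibits $|-|$ as an isomorphism of CwFs. Recall that the syntactic CwF on $\cont$ assigns to a context $\Gamma$ the set of type judgements $\judge{\Gamma}{A}$ as its types and the term judgements $\judge{\Gamma}{a:A}$ as its terms (in each case modulo definitional equality), with comprehension given by context extension $\Gamma \mapsto (\Gamma,x:A)$. I would define the corresponding data on $\synt$ by declaring the types over $|\Gamma|$ to be the types over $\Gamma$, the terms likewise, and the comprehension of $A$ over $|\Gamma|$ to be $|\Gamma,x:A|$ together with its projection $|\Gamma,x:A| \to |\Gamma|$ and the evident variable term.

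The first point to settle is that this assignment is well defined, i.e.\ that it depends only on the image groupoid $|\Gamma|$ and the image functor $|\mylist{m}|$, not on a chosen presentation. This amounts to checking that $|-|$ is bijective on objects --- so that $|\Gamma|$ determines $\Gamma$ --- and that the induced substitution action is determined by the image functor, so that reindexing of types and terms along a morphism of $\synt$ is unambiguous. Injectivity on objects is available because the objects of $|\Gamma|$ are tuples of terms typed exactly according to the declarations of $\Gamma$, whence the shape of $\Gamma$ can be recovered from the groupoid; the corresponding statement for morphisms is what guarantees that $-[\mylist{m}]$ depends only on $|\mylist{m}|$.

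Granting this, the reindexing of types and terms along a morphism $|\mylist{m}| : |\Delta| \to |\Gamma|$ is the syntactic substitution $-[\mylist{m}]$, which is functorial and strictly respects definitional equality because it already does so in $\cont$; hence the transported type- and term-structure forms genuine (contravariant) presheaves on $\synt$, with terms indexed over types as required. Since the transport maps on types and on terms are bijections by construction, and $|-|$ is full and surjective on objects onto $\synt$, the functor $|-|$ will be an isomorphism of CwFs as soon as comprehension is correctly matched on both sides.

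The main obstacle is verifying that comprehension transports: one must show that $|\Gamma,x:A|$, with its projection and variable term, satisfies the universal property of comprehension in $\synt$, namely that for every $|\mylist{m}| : |\Delta| \to |\Gamma|$ and every term $\judge{\Delta}{b : A[\mylist{m}]}$ there is a unique functor $|\Delta| \to |\Gamma,x:A|$ over $|\mylist{m}|$ whose associated variable is $b$. This is exactly where the earlier pullback results do the work: Lemma~\ref{lem:subst_pb} exhibits the substitution squares, and Lemma~\ref{lem:ctxt_weak} the weakening squares, as honest pullbacks of syntactic groupoids, which is precisely the fibrational form of comprehension and transports the comprehension pullback from $\cont$ to $\synt$. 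With comprehension matched, the remaining CwF axioms for $\synt$ follow by transporting the corresponding identities from $\cont$ across the object-bijection, and $|-|$ is the desired isomorphism of CwFs.
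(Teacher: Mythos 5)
Your proposal is correct and follows essentially the same route as the paper: the authors likewise observe that $|-|$ is faithful and injective on objects, transport the CwF structure of $\cont$ to its image, and then describe that structure directly by taking types over $|\Gamma|$ to be types in context $\Gamma$, comprehension to be the Grothendieck construction $|\Gamma,x:A|\to|\Gamma|$, and terms to be syntactic sections of this projection. The only cosmetic difference is that you make the comprehension universal property explicit via the substitution and weakening pullback lemmas, which the paper leaves implicit in the transport-of-structure step.
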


Since the functor $|-|$\/ is faithful and injective on objects, it is immediate that its image
can be made into a CwF which is isomorphic to $\cont$. However, as we shall now explain,
more can be said, since the CwF structure on the image can be given directly through
more intuitive constructions.

Given a syntactic context $|\Gamma|$, the collection of \emph{types} over $|\Gamma|$\/ is
that of the syntactic model, i.e. it is the set of types $A$\/ in context $\Gamma$. 
As described in section~\ref{sec:informal}, such a type gives rise to a pseudofunctor $|\Gamma| \to \groupoids$,
whose Grothendieck construction defines the syntactic groupoid $|\Gamma,x:A|$, with
first projection $p:|\Gamma,x:A| \to |\Gamma|$. Then by a \emph{syntactic section} of
$p$\/ we mean a section of $p$\/ which is actually a context morphism. It is easily seen
that such a section corresponds uniquely to the interpretation of a
 definitional equality class of terms $\Gamma \vdash t:A$. 
We thus may define a \emph{term} over $\Gamma \vdash A$\/ to be such a syntactic
section. 

For later use we alse record the following easy lemma, the proof of which is easily extracted
from the calculations done so far:

\begin{lemma}\label{lem:term_subst}
Given a context morphism $\mylist{m}:\Delta \to \Gamma$\/ and a term
$\Gamma \vdash t:A$\/ we have a diagram
\[
\xymatrix{
|\Delta| \ar[d]_{t[\mylist{m}/\mylist{x}]} \ar[r]^{\mylist{m}} & |\Gamma| \ar[d]^{|t|} \\
|\Delta,x:A[\mylist{m}/\mylist{x}]| \ar[d] \ar[r] & |\Gamma,x:A| \ar[d] \\
|\Delta| \ar[r]_{\mylist{m}} & |\Gamma|
}
\]
in which the squares are pullbacks and the vertical composites are the identity. 
\end{lemma}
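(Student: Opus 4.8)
The plan is to prove the two assertions separately — first that the vertical composites are identities, and then that both squares are pullbacks — obtaining the latter by a pasting argument that reduces everything to the substitution and weakening pullbacks already in hand.

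First I would dispose of the claim about the vertical composites, which is immediate from the way terms are interpreted. By construction $|t|:|\Gamma| \to |\Gamma,x:A|$ is a section of the first projection $|\Gamma,x:A| \to |\Gamma|$, and likewise $|t[\mylist{m}/\mylist{x}]|$ is a section of $|\Delta,x:A[\mylist{m}/\mylist{x}]| \to |\Delta|$. Hence the left and right vertical composites in the displayed diagram are $1_{|\Delta|}$ and $1_{|\Gamma|}$ respectively, which is precisely the second assertion.

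Next I would establish that the \emph{bottom} square is a pullback. The middle horizontal functor $|\Delta,x:A[\mylist{m}/\mylist{x}]| \to |\Gamma,x:A|$ is the one induced by the extended context morphism $(\mylist{m}, x \mapsto x)$, and just like $|\mylist{m}|$ itself it factors as a substitution functor followed by a weakening functor,
\[
|\Delta,x:A[\mylist{m}/\mylist{x}]| \xrightarrow{\ \sigma_{\Delta;\mylist{m};(x:A)}\ } |\Delta,\Gamma,x:A| \xrightarrow{\ \tau\ } |\Gamma,x:A|,
\]
whose effect on the base recovers $|\mylist{m}| = \tau_{\Delta;\Gamma}\circ|\overline{\mylist{m}}|$. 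The substitution square sits inside a pullback by the context-morphism version of Lemma~\ref{lem:subst_pb} recorded just before the preceding proposition, and the weakening square is a pullback by Lemma~\ref{lem:ctxt_weak}. Pasting these two pullbacks along the shared edge $|\Delta,\Gamma,x:A| \to |\Delta,\Gamma|$ yields the bottom square of the lemma as a pullback.

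Finally, the \emph{top} square is handled by the cancellation half of the pasting lemma. Stacking the two squares vertically produces an outer rectangle whose top and bottom edges are both $\mylist{m}$ and whose two vertical composites are both identities (by the first step); a square with identity verticals and equal horizontals is automatically a pullback. Since the bottom square is a pullback, the pasting lemma then forces the top square to be a pullback as well. The only mildly fiddly point is the bookkeeping of the substitution/weakening factorization of the middle functor; once that identification is made the argument is entirely formal, which is why the result follows easily from the machinery already developed.
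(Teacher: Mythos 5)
Your proof is correct and takes exactly the route the paper intends: the paper records this lemma without an explicit proof, saying only that it is ``easily extracted from the calculations done so far,'' and those calculations are precisely the context-morphism version of Lemma~\ref{lem:subst_pb} and the weakening pullback of Lemma~\ref{lem:ctxt_weak}, pasted and then cancelled against the identity-vertical outer rectangle as you do. The one step you leave tacit is that the top square actually \emph{commutes} on arrows (needed before the cancellation lemma applies); this holds on objects by definition and on morphisms by the usual reduce-to-reflexivity-plus-1-truncation argument covered by the paper's coherence remark, so it is a harmless omission.
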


\section{The model}\label{sec:model}

Throughout this section, we work with a 
fixed theory of the form $\mathbb{T}_1[G]$, where $G$\/ is a graph.
In the previous section, we have organized syntactic groupoids into a model
in the form of a category with families. 
In this section, we extend this model by adding a notion of realizer. We first explain 
the general form the resulting CwF will take, and then investigate the semantic type formers
one by one.

\subsection{The Realizability CwF}

So far we have constructed a category with families $\synt$\/ of syntactic groupoids.
This is a sound and complete model of the 1-truncated type theory, but for us it is only
an intermediate step, as we shall now glue this model with a notion of realizability.
This model shall again take the form of a CwF. For clarity, we stress that the CwF $\synt$,
even though the underlying category is a subcategory of the category of groupoids, has
a CwF structure which does not agree with the Hofmann-Streicher model on the category
of groupoids. 

The underlying category of the realizability CwF will be called $\mathcal{V}$, and it is 
defined as follows:
\begin{description}
\item[Objects] cloven fibrations $\C \to |\Gamma|$, where $\Gamma$\/ is a context and
$\C$\/ is a groupoid.
\item[Morphisms] are pairs $(F,|\mylist{m}|)$\/ forming commutative squares
\[
\xymatrix{
\C \ar[r] \ar[d]_F & |\Gamma| \ar[d]^{|\mylist{m}|}\\
\D \ar[r] & |\Delta|
}
\]
\end{description}

Thus, $\mathcal{V}$\/ is the full subcategory on the gluing of $|-|:\synt \to \groupoids$\/ 
determined by the cloven fibrations. In what follows, we shall often denote a typical object
of $\mathcal{V}$\/ by $\pi_\Gamma:\lscott \Gamma \rscott \to |\Gamma|$, and a typical
morphism from $\pi_\Gamma$\/ to $\pi_\Delta$\/ by
 $(\lscott \mylist{m}\rscott,|\mylist{m}|)$. Of course,
the domain of an object in $\mathcal{V}$\/ is not necessarily determined in any way by its codomain,
but the objects which arise by successive applications of comprehension will be determined completely
by their syntactic part. Anticipating the format of the interpretation even more, we will write
the objects of $\lscott \Gamma \rscott$\/ as $(\mylist{c},\mylist{\gamma})$, where
$\mylist{c}$\/ is an object of the groupoid $|\Gamma|$ and
$\mylist{\gamma}$ is an object in the fiber of $\pi_{\Gamma}$ over $\mylist{c}$.

We note that $\mathcal{V}$\/ has a terminal object, namely the identity morphism on the terminal
groupoid, written $\lscott () \rscott \to |()|$. 

The CwF we now define will be called $\real{R}$, where $\mathcal{R}:|G| \to \sets$\/ is
a basic notion of realizer (see section~\ref{sec:informal} for a discussion).

\begin{description}
\item[Underlying category] the category $\mathcal{V}$;
\item[Semantic Types] Given a semantic context $\pi_\Gamma:\lscott \Gamma \rscott \to |\Gamma|$,
 a \emph{semantic type} over $\pi_\Gamma$\/ 
consists of the following data:
\begin{itemize}
\item A type $\Gamma \vdash A$\/ in context $\Gamma$.
\item A functor $\lscott \Gamma \vdash A\rscott:
\|\Gamma,x:A\| \to \sets$, where $\|\Gamma,x:A\|$\/ 
is the groupoid arising in the following pullback:
\[
\xymatrix{
\|\Gamma,x:A\| \ar[r] \ar[d] & |\Gamma,x:A| \ar[d] \\
\lscott \Gamma\rscott  \ar[r]_{\pi_\Gamma} & |\Gamma|
}
\] 
We denote such a type by $(\Gamma,x:A,\forces)$, where $\forces$\/ is the functor
$\|\Gamma,x:A\| \to \sets$. Instead of 
$\tau \in \/\/  \forces\!\!(\mylist{c},\mylist{\gamma},a)$\/ we
write
\[ \tau \forces_{(\mylist{c},\mylist{\gamma})} a:A(\mylist{c})\]
or, when no confusion will result, simply $\tau\forces_{\mylist{\gamma}}a:A(\mylist{c})$.
\end{itemize}
\item[Comprehension] Given a semantic type $(\Gamma,x:A,\forces)$, its comprehension 
consists of the fibration $\lscott \Gamma,x:A \rscott \to |\Gamma,x:A|$, where the domain 
is the Grothendieck construction of the functor $\forces$. 
Then the fibre of the projection $\lscott \Gamma,x:A\rscott \to \lscott \Gamma \rscott$\/ over an object 
$(\mylist{c},\mylist{\gamma})$\/ consists of objects of the form 
$(\mylist{c},\mylist{\gamma},a,\alpha)$\/  where $(\mylist{c},a)$\/ is an object of $|\Gamma,x:A|$\/ 
and $\alpha \forces_{(\mylist{c},\mylist{\gamma})}a:A$.
\item[Type Substitution] Given a semantic context morphism
 $(\lscott \mylist{m}\rscott,|\mylist{m}|):
(\pi_\Delta:\lscott \Delta \rscott \to |\Delta|) \to
(\pi_\Gamma:\lscott \Gamma \rscott \to |\Gamma|)$\/ and given a semantic type 
$(\Gamma,x:A,\forces)$\/ as above, we consider the diagram
\begin{equation}\label{eq:reind}
\xymatrix{
& \|\Gamma,x:A\| \ar[rr] \ar[dd] && |\Gamma,x:A| \ar[dd] \\
\|\Delta,x:A[\mylist{m}]\| \ar[rr] \ar[dd] \ar@{-->}[ur]^{\hat{\sigma}_{\mylist{m};A}} 
   && |\Delta,x:A[\mylist{m}]| \ar[dd] \ar[ur]_{\sigma_{\mylist{m};A}} \\
& \lscott \Gamma \rscott \ar[rr] && |\Gamma| \\
\lscott \Delta \rscott \ar[rr] \ar[ur]_{\lscott \mylist{m} \rscott} && |\Delta| \ar[ur]_{|\mylist{m}|}
}
\end{equation}
in which the bottom square commutes because 
$(\lscott \mylist{m} \rscott,|\mylist{m}|)$\/ is a morphism,
the right-hand square is a pullback by virtue of the properties of the substitution functors,
and the front and back squares are pullbacks by definition. 
Thus there is a unique mediating
arrow $\hat{\sigma}_{\mylist{m};A}:\|\Delta,x:A[\mylist{m}]\| \to \|\Gamma,x:A\|$\/ 
making the left hand square a pullback.
We then define the composite
\[ 
\xymatrix{
\|\Delta,x:A[\mylist{m}]\| \ar[r]^-{\hat{\tau}_{\mylist{m};A}} 
 & \|\Gamma,x:A\| \ar[r]^-\forces & \sets
}
\]
to be the realizer functor associate to the reindexed type.
Note that as a consequence, we have a pullback square
\begin{equation}\label{eq:reind2}
\xymatrix{
\lscott \Delta,x:A[\mylist{m}]\rscott \ar[r]^-{\overline{\tau}_{\mylist{m};A}}
 \ar[d] & \lscott \Gamma,x:A\rscott \ar[d] \\
\lscott \Delta \rscott \ar[r]_{\lscott \mylist{m}\rscott} & \lscott \Gamma\rscott }
\end{equation}
 
The functor $\overline{\tau}_{\mylist{m};A}$\/ sends an object $(\mylist{d},\mylist{\delta},
a,\alpha)$\/ to $(\mylist{m}[\mylist{d}/\mylist{x}], \mylist{m}[\mylist{\delta}],
a,\alpha)$, where $\lscott m \rscott(\mylist{d},\mylist{\delta})=(\mylist{m}[\mylist{d}/\mylist{x}],
\mylist{m}[\mylist{\delta}])$.

\item[Semantic Terms] Given a type 
$(\Gamma,x:A,\forces)$\/ 
as above, we have the induced projection fibration
 $\lscott \Gamma,x:A\rscott \to \lscott \Gamma\rscott$;
 then a 
 \emph{semantic term} is a section $\lscott s\rscott$\/ of this fibration which
\emph{descends to a syntactic section}, in the sense that there is a syntactic section
$|s|:|\Gamma| \to |\Gamma,x:A|$\/ making the following diagram commute:
\[
\xymatrix{
\lscott \Gamma,x:A\rscott \ar[r] & \|\Gamma,x:A\| \ar[r] & |\Gamma,x:A| \\
& \lscott \Gamma\rscott \ar[ul]^{\lscott s\rscott} \ar[r]_{\pi_\Gamma}& |\Gamma| \ar[u]_{|s|}
}
\]
I.e., a term is simply given by a section of the corresponding display
map in the category $\mathcal{V}$. We will write
\[ \lscott t\rscott(\mylist{c},\mylist{\gamma})=
(\mylist{c},\mylist{\gamma},t(\mylist{c}),t[\mylist{\gamma}])\]
to denote the action of such a section.
\item[Term Substitution] Given a semantic term $(\lscott s \rscott,|s|)$\/ as above, and given 
a context morphism $(\lscott \mylist{m}\rscott,|\mylist{m}|)$\/ from 
$(\pi_\Delta:\lscott \Delta\rscott \to |\Delta|)$\/ to
$(\pi_\Gamma:\lscott \Gamma \rscott\to |\Gamma|)$, 
note first that since the diagram~\eqref{eq:reind2} is a pullback, the section
$\lscott s\rscott$\/ induces a section of 
$\lscott \Delta,x:A[\mylist{m}]\rscott \to \lscott \Delta \rscott$. 
This is easily seen to descend to the induced section of $|\Delta,x:A[\mylist{m}]| \to |\Delta|$.
\end{description}

We first note that while in the above formulation pullbacks are used to define substitution for types
and for terms, these are pullbacks of cloven fibrations, 
so there is no concern about coherence. We have:

\begin{proposition}
Under the above definitions, $\real{R}$\/ is a category with families.
\end{proposition}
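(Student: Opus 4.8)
The plan is to verify, in turn, the defining axioms of a category with families: a base category with terminal object, a contravariant functor of semantic types, a contravariant assignment of semantic terms, and a comprehension operation satisfying its universal property. Throughout I would exploit two features of the construction that do most of the work for free. First, the earlier theorem exhibits $|-|:\cont \to \synt$ as an isomorphism of CwFs, so every \emph{syntactic} component of each axiom --- the underlying type $\Gamma\vdash A$, the underlying section $|s|$, and their behaviour under substitution --- is already known to satisfy the CwF laws strictly, and there is nothing to reprove there. Second, as observed in the remark preceding the statement, type and term substitution in $\real{R}$ are implemented by genuine pullbacks of \emph{cloven} fibrations, so the substitutions compose on the nose and no coherence isomorphisms intervene. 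Thus the task reduces to checking that the \emph{realizer} data glued on top behaves strictly functorially and that comprehension has the required universal property.

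For the type functor, I would first note that the terminal object $\lscott()\rscott\to|()|$ is already in hand. A semantic type is a pair $(A,\forces)$ with $A$ a syntactic type and $\forces:\|\Gamma,x:A\|\to\sets$; substitution along $(\lscott\mylist{m}\rscott,|\mylist{m}|)$ replaces $A$ by the syntactic $A[\mylist{m}]$ and $\forces$ by $\forces\circ\hat\sigma_{\mylist{m};A}$, where $\hat\sigma_{\mylist{m};A}$ is the mediating map into the pullback $\|\Gamma,x:A\|$ of~\eqref{eq:reind}. To prove functoriality it suffices to show $\hat\sigma_{\mathrm{id};A}=1$ and $\hat\sigma_{\mylist{m};A}\circ\hat\sigma_{\mylist{n};A[\mylist{m}]}=\hat\sigma_{\mylist{m}\mylist{n};A}$; both are immediate from the \emph{uniqueness} clause of the pullback universal property, once one knows that the syntactic substitution functors $\sigma$ (Lemma~\ref{lem:subst_pb}) and the base maps $\lscott\mylist{m}\rscott$ already compose correctly. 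Precomposing with $\forces$ then yields strict functoriality of $Ty$. The argument for $Tm$ is parallel: a semantic term is a section that descends to a syntactic section, its substitute is obtained via the pullback~\eqref{eq:reind2}, and functoriality again follows from uniqueness of the induced section together with the already-established functoriality of syntactic term substitution (Lemma~\ref{lem:term_subst}).

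The substantive step is comprehension. Given a semantic type $(\Gamma,x:A,\forces)$ I would take the comprehension object to be $\lscott\Gamma,x:A\rscott\to|\Gamma,x:A|$ with projection to $\lscott\Gamma\rscott$, and then verify the universal property: for every semantic context morphism $(\lscott\mylist{m}\rscott,|\mylist{m}|):\pi_\Delta\to\pi_\Gamma$ and every semantic term of the reindexed type $A[\mylist{m}]$ over $\Delta$, there is a unique pairing $\lscott\Delta\rscott\to\lscott\Gamma,x:A\rscott$ over $\mylist{m}$ restricting to the given term. On syntactic parts this pairing is supplied by comprehension in $\synt$. On realizer parts, the pullback~\eqref{eq:reind2} identifies $\lscott\Delta,x:A[\mylist{m}]\rscott$ with the reindexing of $\lscott\Gamma,x:A\rscott$ along $\lscott\mylist{m}\rscott$, so the realizer carried by the given term transports across uniquely; uniqueness of the whole pairing is then forced by the pullback property combined with uniqueness on the syntactic side.

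I expect the main obstacle to be bookkeeping rather than conceptual: confirming that the realizer functor of a doubly-substituted type and the realizer of a doubly-substituted term are \emph{literally equal}, not merely isomorphic, under composite substitutions, so that $Ty$ and $Tm$ are strict functors and comprehension is strictly stable under substitution. This is precisely the point that the cloven-fibration-and-pullback design is meant to finesse, so the proof largely consists of tracking the unique mediating maps through the stacked pullback squares of~\eqref{eq:reind} and~\eqref{eq:reind2} and invoking their uniqueness, rather than performing any genuinely new type-theoretic calculation.
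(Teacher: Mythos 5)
Your proposal is correct and follows exactly the route the paper intends: the paper itself offers no written proof beyond the remark that substitution is implemented by pullbacks of cloven fibrations (so no coherence issues arise), together with the previously established fact that $|-|:\cont\to\synt$ is an isomorphism of CwFs, and your verification simply makes explicit the strict-functoriality and comprehension checks that this remark is meant to cover. In particular, your reduction of functoriality of $Ty$ and $Tm$ to the uniqueness clause of the pullback universal property applied to the mediating maps $\hat{\sigma}_{\mylist{m};A}$ of~\eqref{eq:reind} and the sections induced via~\eqref{eq:reind2} is precisely the omitted argument.
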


The generating type $G$\/ is interpreted in $\real{R}$\/ using the notion of realizer $\mathcal{R}$,
as in
\[
\xymatrix{
\lscott x:G \rscott =\int \mathcal{R} \ar[d] \\
\|x:G\| \ar[r]^= \ar[d] & |x:G|\ar[d] \\
\lscott ()\rscott \ar[r]^= & |()|.
}
\]

For future reference, we also record the following immediate consequence of the definition,
which describes the effect of term substitution in terms of realizers:

\begin{lemma}\label{lem:real_subst}
For a context morphism 
$\lscott \mylist{m}\rscott:\lscott \Delta \rscott \to \lscott \Gamma\rscott$, a type 
$\Gamma \vdash A$\/ and
an object $(\mylist{d},\mylist{\delta})$\/ of $\lscott \Delta\rscott$, we have
\[ \tau \forces_{\mylist{\delta}} a:A[\mylist{m}] \text{ iff } 
\tau \forces_{\mylist{m}[\mylist{\delta}]} a:A[\mylist{m}]\]
\end{lemma}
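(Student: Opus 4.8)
The plan is to unwind the definition of type substitution in $\real{R}$ and observe that the two displayed sets of realizers are in fact \emph{equal} by construction, so that the asserted equivalence holds trivially, with one and the same $\tau$ witnessing both sides.

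First I would recall that, by the Type Substitution clause defining $\real{R}$, the realizer functor attached to the reindexed type $A[\mylist{m}]$ over $\Delta$ is the composite
\[
\xymatrix{
\|\Delta,x:A[\mylist{m}]\| \ar[r]^-{\hat{\sigma}_{\mylist{m};A}}
& \|\Gamma,x:A\| \ar[r]^-{\forces} & \sets,
}
\]
where $\hat{\sigma}_{\mylist{m};A}$ is the mediating arrow into the pullback $\|\Gamma,x:A\|$ supplied by diagram~\eqref{eq:reind}. By the very definition of the $\forces$-notation, the left-hand side $\tau\forces_{\mylist{\delta}}a:A[\mylist{m}]$ asserts precisely that $\tau$ lies in $\forces\bigl(\hat{\sigma}_{\mylist{m};A}(\mylist{d},\mylist{\delta},a)\bigr)$.

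Next I would compute the object $\hat{\sigma}_{\mylist{m};A}(\mylist{d},\mylist{\delta},a)$ of $\|\Gamma,x:A\|$. Since $\|\Gamma,x:A\|$ is the pullback of $|\Gamma,x:A|\to|\Gamma|$ along $\pi_\Gamma$, this object is determined by its two projections, and the universal property forcing the left-hand square of~\eqref{eq:reind} to commute fixes these to be $\lscott\mylist{m}\rscott(\mylist{d},\mylist{\delta})$ in $\lscott\Gamma\rscott$ and $\sigma_{\mylist{m};A}(\mylist{d},a)$ in $|\Gamma,x:A|$. Using the explicit description recorded in the Type Substitution clause, $\lscott\mylist{m}\rscott(\mylist{d},\mylist{\delta})=(\mylist{m}[\mylist{d}/\mylist{x}],\mylist{m}[\mylist{\delta}])$, while on the syntactic side the substitution functor carries the underlying pair $(\mylist{d},a)$ to $(\mylist{m}[\mylist{d}/\mylist{x}],a)$, relocating only the context component and leaving the term $a$ itself unchanged. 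Hence
\[
\hat{\sigma}_{\mylist{m};A}(\mylist{d},\mylist{\delta},a)
= \bigl(\mylist{m}[\mylist{d}/\mylist{x}],\mylist{m}[\mylist{\delta}],a\bigr).
\]

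Finally I would read off both sides. Applying $\forces$ to the object just computed gives, by the $\forces$-notation over $\Gamma$, exactly the set of $\tau$ with $\tau\forces_{\mylist{m}[\mylist{\delta}]}a:A(\mylist{m}[\mylist{d}/\mylist{x}])$, which is the right-hand side. Thus the reindexed realizer set over $\Delta$ at $\mylist{\delta}$ and the original realizer set over $\Gamma$ at $\mylist{m}[\mylist{\delta}]$ are literally the same set, and the equivalence follows. The only point requiring care — and the closest thing to an obstacle — is bookkeeping: matching the two flavours of the subscript on $\forces$ against the correct projection of $\hat{\sigma}_{\mylist{m};A}$, and checking that the syntactic substitution functor $\sigma_{\mylist{m};A}$ fixes the term component $a$ while relocating the context component via $\mylist{m}$. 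Both facts are immediate from the constructions of Section~\ref{sec:synt}, so no genuine difficulty arises.
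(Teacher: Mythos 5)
Your proposal is correct and matches the paper's approach: the paper offers no proof at all, recording the lemma as an ``immediate consequence of the definition'' of type substitution, and your unwinding of the Type Substitution clause --- computing $\hat{\sigma}_{\mylist{m};A}(\mylist{d},\mylist{\delta},a)=(\mylist{m}[\mylist{d}/\mylist{x}],\mylist{m}[\mylist{\delta}],a)$ from the pullback and noting that the reindexed realizer functor is $\forces\circ\hat{\sigma}_{\mylist{m};A}$ --- is precisely the justification the authors leave implicit. The two realizer sets are literally equal, as you say.
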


The remainder of this section is dedicated to the proof of the main result:

\begin{theorem}
Let $G$\/ be a graph, and let $\mathcal{R}:|G| \to \sets$\/ be a notion of realizer
for which basic terms have realizers, stable under reindexing. 
Then the CwF $\real{R}$\/ is a sound and complete model of $\mathbb{T}_1[G]$.
\end{theorem}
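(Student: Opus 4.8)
The plan is to establish soundness and completeness separately, exploiting the fact—already in hand—that $\real{R}$ is a CwF and that its syntactic shadow $\synt$ is a sound and complete model of $\mathbb{T}_1[G]$. Completeness is the easier half. I would first observe that there is a forgetful morphism of CwFs $U:\real{R}\to\synt$ sending a semantic context $\pi_\Gamma:\lscott\Gamma\rscott\to|\Gamma|$ to its codomain $|\Gamma|$, a semantic type $(\Gamma,x:A,\forces)$ to the syntactic type $A$ over $|\Gamma|$, and a semantic term $(\lscott s\rscott,|s|)$ to its underlying syntactic section $|s|$. The requirement that semantic terms \emph{descend to syntactic sections} is precisely what makes $U$ well-defined on terms, and compatibility with substitution is immediate because the latter is defined by pullbacks of cloven fibrations. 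By construction the syntactic component of the interpretation $\lscott-\rscott$ coincides with the syntactic interpretation $|-|$, so $U(\lscott t\rscott)=|t|$. Since $\synt$ is isomorphic to the term model $\cont$, any definitional equality $\lscott t\rscott=\lscott s\rscott$ holding in $\real{R}$ projects along $U$ to $|t|=|s|$ in $\synt$, hence to a derivable equality; this yields completeness.

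For soundness I must exhibit interpretations of each type former of $\mathbb{T}_1[G]$—the basic type $G$ with its generators, dependent products, dependent sums, identity types, and the natural numbers—and verify the associated introduction, elimination, and conversion rules, including the soundness lemma that \emph{every} term receives a realizer. Here the morphism $U$ again does considerable work: because $\synt$ already interprets all these formers soundly, the syntactic component of every interpretation is fixed in advance and known to satisfy the rules, so what remains is purely the realizability data living in the fibres of $\pi_\Gamma$. Concretely, for each type former it suffices to (i) specify the realizers $\tau\forces_{\mylist{\gamma}}t:T(\mylist{c})$ of the constructed type in terms of the realizers of its inputs, (ii) verify that this assignment is functorial, i.e.\ stable under the reindexing action of $\|\Gamma,x:T\|\to\sets$ required of a semantic type, and (iii) check that the realizer attached to each introduction term, and to the output of each elimination, matches the prescribed clauses so that the $\beta$- (and $\eta$-) conversions hold at the level of realizers. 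The basic type is handled directly: its interpretation is $\int\mathcal{R}$, and the hypothesis that basic terms have realizers stable under reindexing guarantees that each generator $\namefor{a}:G$ and $\namefor{f}:\idn{G}{}(\namefor{a},\namefor{b})$ is soundly interpreted. The 1-truncation rule is inherited from $\synt$ with no additional realizability content.

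The main obstacle is the interpretation of dependent products. Following the Kleene/Tait pattern sketched in Section~\ref{sec:informal}, a realizer of a term of type $\Pi$ must be an operation carrying realizers of the argument to realizers of the value, and the delicate points are to confirm that this operation is genuinely stable under the reindexing action on $\|\Gamma,x:\Pi\|$ and that $\beta$ and $\eta$ hold on realizers rather than only on their syntactic shadows. Identity types present a subtler but parallel difficulty: the clause declaring $f:\idn{A}{}(a,b)$ realized exactly when the naturality square relating the chosen realizers of $a$ and $b$ commutes must be shown well-defined and compatible with $\Jelim{}$-elimination, and it is precisely the 1-truncation rule that collapses the relevant coherence data to strict equalities so that these squares can be checked on the nose. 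Dependent sums and the natural numbers are then routine by comparison. These verifications are carried out type-former-by-type-former in the remaining subsections, and assembling them with the completeness argument above proves the theorem.
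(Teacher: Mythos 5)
Your proposal is correct and follows essentially the same route as the paper: completeness is read off from the fact that the realizability model retains (equivalently, projects onto) a full and faithful copy of the syntactic CwF $\synt$, and soundness is established type-former-by-type-former by specifying the realizability clauses, checking stability under reindexing, and verifying the conversion rules on realizers, with the 1-truncation rule doing the coherence work for identity types. The paper's "proof" of this theorem is precisely the remainder of its Section 4 organized along these lines, so your outline matches both the decomposition and the identification of where the real work lies.
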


Of course, completeness is a trivial consequence of the fact that the model contains a full and faithful
copy of the syntactic model.

\subsection{Semantic product types}

Consider a context $\lscott \Gamma\rscott$, a type $A$\/ over this context with comprehension
$\lscott \Gamma,x:A\rscott$\/ and furthermore a type $B$\/ over this comprehension, giving
a further comprehension $\lscott \Gamma,x:A,y:B(x)\rscott$. Thus we have a diagram
\[
\xymatrix{
\lscott \Gamma,x:A,y:B(x)\rscott \ar[r] \ar[d] & \lscott \Gamma,x:A\rscott \ar[d] \ar[r] 
& \lscott \Gamma\rscott \ar[d] \\
|\Gamma,x:A,y:B(x)| \ar[r] & |\Gamma,x:A| \ar[r] & |\Gamma|.
}
\]
We need to define the semantic dependent product type in context $\lscott \Gamma\rscott$.
This we take to consists first of the syntactic type $\Gamma \vdash \prod_{x:A}B(x)$,
giving rise to the pullback diagram
\[
\xymatrix{
\|\Gamma,v:\prod_{x:A}B(x)\| \ar[r] \ar[d] & |\Gamma,v:\prod_{x:A}B(x)| \ar[d] \\
\lscott \Gamma \rscott \ar[r] & |\Gamma|.
}
\]
Second, we must specify the realizability presheaf 
\[ \|\Gamma,v:\prod_{x:A}B(x)\| \to \sets.\]
To this end, consider an object 
$(\mylist{c},\mylist{\gamma})$ of $\lscott\Gamma\rscott$; we have
associated groupoids
\begin{align*}
\lscott\Gamma,x:A\rscott_{(\mylist{c},\mylist{\gamma})}\text{ and }\lscott\Gamma,x:A,y:B(x)\rscott_{(\mylist{c},\mylist{\gamma})}
\end{align*}
obtained as the fibres of
$\lscott\Gamma,x:A\rscott\to\lscott\Gamma\rscott$ and
$\lscott\Gamma,x:A,y:B(x)\rscott\to\lscott\Gamma\rscott$ over
$(\mylist{c},\mylist{\gamma})$. 
Given an object 
$(\mylist{c},\mylist{\gamma},f)$ of $\|\Gamma,v:\prod_{x:A}B(x)\|$, we define
\begin{align*}
  \phi\forces_{\mylist{\gamma}}f:\prod_{x:A(\mylist{c})}B(\mylist{c},x)
\end{align*}
when $\phi$\/ is a section
\begin{align*}
  \xy
  {\ar^(.4){\phi}(0,15)*+{\lscott
      \Gamma,x:A\rscott_{(\mylist{c},\mylist{\gamma})}};(40,15)*+{\lscott
      \Gamma,x:A,y:B(x)\rscott_{(\mylist{c},\mylist{\gamma})}}};
  {\ar_{=}(0,15)*+{\lscott
      \Gamma,x:A\rscott_{(\mylist{c},\mylist{\gamma})}};(20,0)*+{\lscott
      \Gamma,x:A\rscott_{(\mylist{c},\mylist{\gamma})}}};
  {\ar(40,15)*+{\lscott
      \Gamma,x:A,y:B(x)\rscott_{(\mylist{c},\mylist{\gamma})}};(20,0)*+{\lscott
      \Gamma,x:A\rscott_{(\mylist{c},\mylist{\gamma})}}};
  \endxy
\end{align*}
of the canonical projection satisfying the conditions that
\begin{align*}
  \phi(a,\alpha) & =\bigl(a,\alpha,\app(f,a),\phi\{\alpha\}\bigr),\text{ and}\\
  \phi(m) & = \bigl(m,\app(f,x)|(1_{\mylist{c}},m)\bigr),
\end{align*}
for $\alpha\forces_{\mylist{\gamma}}a:A(\mylist{c})$ and $m:a\to a'$ in
$|A(\mylist{c})|$.
Thus, $\phi$\/ sends realizers $\alpha$\/ of $a$\/ to realizers $\phi\{\alpha\}$\/ of
$\app(f,a)$\/ in a functorial way. In particular, 
$\phi$ must satisfy the
condition that 
\begin{align}\label{eq:dep_prod_coh}
  \phi\{\alpha\}\cdot\bigl(1_{\mylist{c}},m,\app(f,x)|(1_{\mylist{c}},m)\bigr)
  & = \phi\{\alpha\cdot(1_{\mylist{c}},m)\}
\end{align}
for every arrow $m:a\to a'$ in $|A(\mylist{c})|$ and
$\alpha\forces_{\mylist{\gamma}}a:A(\mylist{c})$. 

\subsubsection{Reindexing}

We must also define a functorial action on these realizers. To this end, suppose we are given,
in addition to $\phi$\/ as above, a morphism 
$(\mylist{h},m):(\mylist{c},\mylist{\gamma},f)\to(\mylist{c}',\mylist{\gamma}',f')$
in \mbox{$\|\Gamma,v:\prod_{x:A}B(x)\|$}. 
We must then define a realizer
\begin{align*}
  \phi\cdot(\mylist{h},m)\forces_{\mylist{\gamma}'}f':\prod_{x:A(\mylist{c}')}B(\mylist{c}',x).
\end{align*}

First, if
$\alpha\forces_{\mylist{\gamma}'}a:A(\mylist{c}')$, then
we have, by the functorial action on the realizer $\alpha$,
 that \mbox{$\alpha\cdot(\mylist{h}^{-1},1_{a\cdot
  \mylist{h}^{-1}})\forces_{\mylist{\gamma}}a\cdot
\mylist{h}^{-1}:A(\mylist{c})$}.  But then
\begin{align*}
  \phi\bigl\{\alpha\cdot(\mylist{h}^{-1},1_{a\cdot
    \mylist{h}^{-1}})\bigr\}\forces_{\mylist{\gamma},\alpha\cdot(\mylist{h}^{-1},1_{a\cdot\mylist{h}^{-1}})}\app(f,a\cdot\mylist{h}^{-1}):B(\mylist{c},a\cdot\mylist{h}^{-1}).
\end{align*}
Now, we observe that the diagram
\begin{align*}
  \xy
  {\ar^-{(\mylist{h}^{-1},1_{a\cdot\mylist{h}^{-1}})}(0,15)*+{(\mylist{c}',\mylist{\gamma}',a)};(34,15)*+{(\mylist{c},\mylist{\gamma},a\cdot\mylist{h}^{-1})}};
  {\ar_{1_{(\mylist{c}',a)}}(0,15)*+{(\mylist{c}',\mylist{\gamma}',a)};(17,0)*+{(\mylist{c}',\mylist{\gamma}',a)}};
  {\ar^{(\mylist{h},\iota(\mylist{h})_{a})}(34,15)*+{(\mylist{c},\mylist{\gamma},a\cdot\mylist{h}^{-1})};(17,0)*+{(\mylist{c}',\mylist{\gamma}',a)}};
  \endxy
\end{align*}
commutes, where we denote the coherence isomorphism
\mbox{$(a\cdot\mylist{h}^{-1})\cdot\mylist{h}\to a$} by $\iota(\mylist{h})_{a}$. 
It follows from this that there is a morphism
\begin{align*}
  (\mylist{h},\iota(\mylist{h})_{a}):\bigl(\mylist{c},\mylist{\gamma},a\cdot\mylist{h}^{-1},\alpha\cdot(\mylist{h}^{-1},1_{a\cdot\mylist{h}^{-1}})\bigr)\to\bigl(\mylist{c}',\mylist{\gamma}',a,\alpha\bigr)
\end{align*}
in $\lscott\Gamma,x:A\rscott$.
Moreover, in $\|\Gamma,x:A,y:B(x)\|$ we have an arrow
\begin{equation}\label{eq:zeta}
  \zeta:\biggl(\mylist{c},\mylist{\gamma},a\cdot\mylist{h}^{-1},\alpha\cdot(\mylist{h}^{-1},1_{a\cdot\mylist{h}^{-1}}),\app(f,a\cdot\mylist{h}^{-1})\biggr)\to\bigl(\mylist{c}',\mylist{\gamma}',a,\alpha,\app(f',a)\bigr)
\end{equation}
given by 
\begin{align*}
  \zeta & =_{\textrm{def}} (\mylist{h},\iota(\mylist{h})_{a},\Jelim{}(\rr(\app(v,a)),f\cdot\mylist{h},f',m)\circ\Jelim{}^{\sigma}(\rr(\app(u,x)),\mylist{c},\mylist{c}',\mylist{h},a,f))
\end{align*}
where
$\Jelim{}^{\sigma}(\rr(\app(u,x)),\mylist{c},\mylist{c}',\mylist{h},a,f)$
is the term
\begin{prooftree}
  \AxiomC{$\judge{\tilde{\Gamma},x:A(\mylist{w}),u:\prod_{x:A(\mylist{v})}B(\mylist{v},x)}{\idn{B(\mylist{w},x)}{}\bigl(\app(u,x\cdot\mylist{z}^{-1})\cdot(\mylist{z},\iota(\mylist{z})_{x}),\app(u\cdot\mylist{z},x)\bigr)}$}
  \noLine
  \UnaryInfC{$\judge{\Gamma,x:A(\mylist{v}),u:\prod_{x:A(\mylist{v})}B(\mylist{v},x)}{\rr(\app(u,x)):\idn{B(\mylist{v},x)}{}\bigl(\app(u,x),\app(u,x)\bigr)}$}
  \UnaryInfC{$\Jelim{}^{\sigma}(\rr(\app(u,x)),\mylist{c},\mylist{c}',\mylist{h},a,f):\idn{B(\mylist{c}',a)}{}\bigl(\app(f,a\cdot\mylist{h}^{-1})\cdot(\mylist{h},\iota(\mylist{h})_{a}),\app(f\cdot\mylist{h},a)\bigr)$}
\end{prooftree}
and $\Jelim{}(\rr(\app(v,a)),f\cdot\mylist{h},f',m)$ is the term
\begin{prooftree}
  \AxiomC{$\judge{v,w:\prod_{x:A(\mylist{c}')}B(\mylist{c}',x),z:\idn{\prod_{x:A(\mylist{c}')}B(\mylist{c}',x)}{}(v,w)}{\idn{B(\mylist{c}',a)}{}(\app(v,a),\app(w,a))}$}
  \noLine
  \UnaryInfC{$\judge{v:\prod_{x:A(\mylist{c}')}B(\mylist{c}',x)}{\rr(\app(v,a)):\idn{B(\mylist{c}',a)}{}(\app(v,a),\app(v,a))}$}
  \UnaryInfC{$\Jelim{}(\rr(\app(v,a)),f\cdot\mylist{h},f',m):\idn{B(\mylist{c}',a)}{}(\app(f\cdot\mylist{h},a),\app(f',a))$}
\end{prooftree}
Finally, we then have that
\begin{align*}
  \phi\bigl\{\alpha\cdot(\mylist{h}^{-1},1_{a\cdot
    \mylist{h}^{-1}})\bigr\}\cdot\zeta\forces_{\mylist{c}',\mylist{\gamma}',a,\alpha}\app(f',a):B(\mylist{c}',a).
\end{align*}
As such, we define $\phi\cdot(\mylist{h},m)$ by 
\begin{align*}
  \bigl(\phi\cdot(\mylist{h},m)\bigr)(a,\alpha) & =_{\textrm{def}} \bigl(a,\alpha,\app(f',a),\phi\bigl\{\alpha\cdot(\mylist{h}^{-1},1_{a\cdot
    \mylist{h}^{-1}})\bigr\}\cdot\zeta\bigr)
\end{align*}

We leave to the reader the tedious but straightforward verification that this is indeed
functorial. Moreover, the construction of the dependent product types is stable under substitution:
given a context morphism $(\lscott \mylist{m} \rscott,|\mylist{m}|):
(\lscott \Delta \rscott \to |\Delta) \to
(\lscott \Gamma\rscott \to |\Gamma|)$, we must verify that the reindexing of 
$\lscott \Gamma,v:\prod_{x:A}B(x)\rscott$\/ along $\lscott \mylist{m} \rscott$\/ agrees with the
dependent product of the reindexed 
$\lscott \Delta,x:A[\mylist{m}],y:B[\mylist{m}]\rscott$. It suffices to show that
the diagram
\[
\xymatrix{
\|\Delta,v:\prod_{A[\mylist{m}]}B[\mylist{m}]\| \ar[r] \ar[dr]_{\forces} & 
\|\Gamma,v:\prod_{A}B\| \ar[d]^\forces\\
& \sets
}
\]
commutes. But this is an immediate consequence of the fact that the diagram
\[
\xymatrix{
\lscott \Delta,x:A[\mylist{m}],y:B[\mylist{m}]\rscott \ar[r] \ar[d] 
 & \lscott \Gamma,x:A,y:B \rscott \ar[d] \\
 \lscott \Delta,x:A[\mylist{m}] \rscott \ar[r] & \lscott \Gamma,x:A\rscott
}
\]
is a pullback, as this guarantees, for each object $(\mylist{d},\mylist{\delta})$\/ of 
$\lscott \Delta \rscott$, a bijective correspondence between sections of
$\lscott \Delta,x:A[\mylist{m}],y:B[\mylist{m}]\rscott_{(\mylist{d},\mylist{\delta})} \to 
\lscott \Delta,x:A[\mylist{m}]\rscott_{(\mylist{d},\mylist{\delta})}$\/ and sections of
 $\lscott \Gamma,x:A,y:B\rscott_{(\mylist{m}(\mylist{d}),\mylist{m}[\mylist{\delta}])} \to 
\lscott \Gamma,x:A\rscott_{(\mylist{m}(\mylist{d}),\mylist{m}[\mylist{\delta}])}$.

\begin{proposition}
The CwF $\real{R}$\/ supports dependent products.
\end{proposition}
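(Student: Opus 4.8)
The plan is to verify the two ingredients that a CwF needs in order to support dependent products: the type former together with its stability under substitution, and a natural isomorphism between semantic terms of $B$ over $\lscott \Gamma,x:A\rscott$ and semantic terms of $\prod_{x:A}B$ over $\lscott \Gamma\rscott$, realized by mutually inverse operations of abstraction and application. The type former $(\Gamma, v:\prod_{x:A}B, \forces)$ together with its realizer presheaf has already been defined above, and its stability under reindexing has been checked via the pullback of comprehensions; so what remains is to produce $\lambda$ and $\app$ at the level of realizers, to check that they are mutually inverse and satisfy the conversion rules, and to confirm naturality in $\Gamma$.

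First I would define abstraction. A semantic term of $B$ over $\lscott \Gamma,x:A\rscott$ assigns, to each object $(\mylist{c},\mylist{\gamma},a,\alpha)$ of $\lscott \Gamma,x:A\rscott$, a realizer $b[\mylist{\gamma},\alpha]\forces_{\mylist{\gamma},\alpha}b(\mylist{c},a):B(\mylist{c},a)$, subject to the term-coherence condition~\eqref{eq:weak_C2}. Given such data I set the realizer of $\lambda x.b$ at $(\mylist{c},\mylist{\gamma})$ to be the section $\phi$ determined by $\phi\{\alpha\}=_{\textrm{def}} b[\mylist{\gamma},\alpha]$, with syntactic part the syntactic $\lambda$-term supplied by $\synt$. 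The only thing to check is that $\phi$ is a genuine realizer, i.e. that it satisfies the functoriality requirement~\eqref{eq:dep_prod_coh}; but instantiating~\eqref{eq:weak_C2} for $b$ at the arrows $(1_{\mylist{c}},m):(\mylist{c},\mylist{\gamma},a,\alpha)\to(\mylist{c},\mylist{\gamma},a',\alpha\cdot(1_{\mylist{c}},m))$ yields precisely this identity.

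Conversely, application is already built into the definition of the realizer presheaf: a realizer $\phi$ of a term $f:\prod_{x:A}B$ is by construction a section sending each $\alpha\forces_{\mylist{\gamma}}a:A(\mylist{c})$ to a realizer $\phi\{\alpha\}$ of $\app(f,a)$, and reading this assignment off over the context $\lscott \Gamma,x:A\rscott$ is exactly a semantic term $\app(f,x):B$. Thus abstraction and application are mutually inverse by definition, and I would record the conversion rules as immediate consequences: the $\beta$-rule holds because the realizer of $\app(\lambda x.b,a)$ is $\phi\{\alpha\}=b[\mylist{\gamma},\alpha]$, which by Lemma~\ref{lem:real_subst} is the realizer of $b[a/x]$, while the underlying syntactic equality is inherited from $\synt$; and the $\eta$-rule follows since $\phi$ is completely determined by the family $\phi\{-\}$.

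The main obstacle is the naturality of this isomorphism in $\Gamma$, that is, its stability under substitution. For the type former this is the pullback-of-comprehensions argument already given; for terms one uses the pullback square~\eqref{eq:reind2} to transport sections, so that substituting into an abstraction again yields an abstraction of the substituted term. The delicate point is that the reindexing action on $\prod$-realizers is not a naive restriction but is mediated by the comparison maps $\zeta$ of~\eqref{eq:zeta}, built from sequential $\Jelim{}$-terms; verifying that $\lambda$ commutes with this reindexing therefore amounts to unwinding $\zeta$ and checking that the two resulting realizers of $f'$ agree. As with the functoriality of $-\cdot(\mylist{h},m)$ left to the reader above, this reduces to producing a propositional identity between the relevant composites and then invoking the 1-truncation rule; this bookkeeping, rather than any conceptual difficulty, is where the real work lies.
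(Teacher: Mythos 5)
Your proposal is correct and follows essentially the same route as the paper: abstraction is defined by $\bigl(\lambda_{x:A}s[\mylist{\gamma}]\bigr)\{\alpha\}=s[\mylist{\gamma},\alpha]$, application by evaluating the section, and the substantive work is the functoriality of the abstraction realizer under reindexing, which (as you say) is discharged by unwinding the comparison map $\zeta$ and appealing to 1-truncation --- exactly the computation the paper carries out explicitly. The only cosmetic difference is that you package application as the inverse of abstraction (the generic application over $\lscott\Gamma,x:A\rscott$) rather than as the pointwise term $\app(f,a)[\mylist{\gamma}]=f[\mylist{\gamma}]\{a[\mylist{\gamma}]\}$; these formulations are equivalent.
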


\begin{proof}
We have detailed the semantic type formation; what remains is to construct the appropriate
abstraction and application terms and show that these have the requisite properties.

First, consider a semantic term $s$\/ of type $(\lscott \Gamma,x:A\rscott, \Gamma,x:A \vdash B(x))$.
Then $s=(\lscott s \rscott, |s|)$, where $\lscott s\rscott:\lscott \Gamma,x:A\rscott \to 
\lscott \Gamma,x:A,y:B(x)\rscott$\/ is the section of the appropriate projection, and 
$|s|$\/ is the corresponding syntactic section.
We must define a term of dependent product type, i.e. a section of the projection
\[ \lscott \Gamma,\prod_{x:A}B(x)\rscott \to \lscott \Gamma \rscott. \]

Given $(\mylist{c},\mylist{\gamma})$
an object of $\lscott \Gamma\rscott$ and $(\mylist{c},\mylist{\gamma},a,\alpha)$ an object of
$\lscott\Gamma,x:A\rscott$ we need to define
$\lambda_{x:A}s[\mylist{\gamma}]$.  In particular, we require a realizer
\begin{align*}
  \bigl(\lambda_{x:A}s[\mylist{\gamma}]\bigr)\{\alpha\}\forces_{\mylist{\gamma},\alpha}\app(\lambda_{x:A}s(\mylist{c}),a):B(\mylist{c},a).
\end{align*}
As such, we define
\begin{align*}
  \bigl(\lambda_{x:A}s[\mylist{\gamma}]\bigr)\{\alpha\} & =_{\textrm{def}} s[\mylist{\gamma},\alpha].
\end{align*}
This satisfies the coherence condition for realizers of terms of
dependent product type since $\lscott f\rscott$ satisfies the
corresponding coherence property.  To see that
$\lscott\lambda_{x:A}s\rscott$ is functorial note
that 
\begin{align*}
  \lambda_{x:A}s[\mylist{\gamma}]\cdot(\mylist{h},\lambda_{x:A}s|\mylist{h})\{\alpha\}
  & =
  \lambda_{x:A}s[\mylist{\gamma}]\bigl\{\alpha\cdot(\mylist{h}^{-1},1_{a\cdot\mylist{h}^{-1}})\bigr\}\cdot\bigl(\mylist{h},\iota(\mylist{h})_{a},\zeta\bigr)\\
  & =
  s[\mylist{\gamma},\alpha\cdot(\mylist{h}^{-1},1_{a\cdot\mylist{h}^{-1}})]\cdot(\mylist{h},\iota(\mylist{h})_{a},s|(\mylist{h},\iota(\mylist{h})_{a})\\
  & = s[\mylist{\gamma}',\alpha]\\
  & = \lambda_{x:A}s[\mylist{\gamma}']\{\alpha\},
\end{align*}
where $\zeta$ is the morphism defined in~\eqref{eq:zeta} and where the third
equation is by functoriality of $\lscott s\rscott$.

Finally, suppose we are given a term $f:\prod_{x:A}B(x)$\/ and a term $a:A$, both in context
$\Gamma$. Then we need to give the application term. For this, suppose given $(\mylist{c},\mylist{\gamma})$ in
$\lscott\Gamma\rscott$, then we have 
\begin{align*}
  f[\mylist{\gamma}]\bigl\{a[\mylist{\gamma}]\bigr\}
\forces_{\mylist{\gamma},a[\mylist{\gamma}]}\app(f(\mylist{c}),a(\mylist{c})):B(\mylist{c},a(\mylist{c}))
\end{align*}
and so we define
\begin{align*}
  \app(f,a)[\mylist{\gamma}] & =_{\textrm{def}} f[\mylist{\gamma}]\bigl\{a[\mylist{\gamma}]\bigr\}.
\end{align*}
That this definition is functorial is then immediate.

Given these definitions, the equations which are required to hold for these application and abstraction
terms are straightforwardly verified.
\end{proof}

\subsection{Semantic identity types}

We now turn to the interpretation of identity types.
Since we work in the 1-truncated
version of the type theory, realizers of terms of identity type will be unique when they exist,
and merely serve as a characteristic function for the graph of the functorial action on realizers,
in the sense that a term $f:\idn{A}{}(a,b)$\/ is realized (relative to realizers $\alpha \forces a:A,
\beta\forces b:A$)\/ if and only if $\alpha \cdot f=\beta$. The only complication is the
 elimination rule, which requires us to construct a realizer for $\Jelim{}$-terms. This is taken
care of by using the fact that there exists a morphism from $\phi(a)$\/ to the
elimination term $\Jelim{}(\phi,a,b,f)$; we may then use the functorial action on realizers to 
turn a realizer of $\phi(a)$\/ into one of $\Jelim{}(\phi,a,b,f)$. 

\subsubsection{Identity type formation} 

Consider a semantic context $\lscott \Gamma \rscott  \to |\Gamma|$, and a type 
$(\Gamma,x:A,\forces)$\/ in context $\Gamma$,
giving a context extension $\lscott \Gamma,x:A \rscott$. In order to describe the identity type 
associated to this data, 
we will first given an explicit description of the groupoid $\|\Gamma,x,y:A,z:\idn{A}{}(x,y)\|$.

\begin{description}
\item[Objects] 
An object is a tuple $(\mylist{c},\mylist{\gamma},a,\alpha,b,\beta,f)$
  such that 
  \begin{itemize}
  \item $(\mylist{c},\mylist{\gamma})$ is an object of $\lscott\Gamma\rscott$;
  \item $a,b$\/ are objects of the groupoid $|A(\mylist{c})|$;
  \item $\alpha\forces_{\mylist{\gamma}}a:A(\mylist{c})$;
  \item
    $\beta\forces_{\mylist{\gamma}}b:A(\mylist{c})$;
    and
  \item $f:\idn{A(\mylist{c})}{}(a,b)$.
  \end{itemize}
Note that it is not required that $\alpha\cdot f = \beta$.
\item[Arrows] An arrow
  $(\mylist{c},\mylist{\gamma},a,\alpha,b,\beta,f)\to(\mylist{c'},\mylist{\gamma'},a',\alpha',b',\beta',f')$
  is a tuple $(\mylist{h},m,n,l)$ such that
  \begin{itemize}
  \item
    $\mylist{h}:(\mylist{c},\mylist{\gamma})\to(\mylist{c'},\mylist{\gamma'})$\/ is a morphism 
    in $\lscott\Gamma\rscott$;
  \item $m:a\cdot\mylist{h}\to a'$ in is a morphism in 
$|A(\mylist{c})|$ such that
    \begin{align*}
      \alpha\cdot(\mylist{h},m)=\alpha'\forces_{\mylist{\gamma}'}a':A(\mylist{c}');
    \end{align*}
  \item $n$ is an arrow
    \begin{align*}
      b\cdot(\mylist{h},m)\;=\;|\judge{\Gamma,x:A}{A}|_{(\mylist{h},m)}(b)\to b'
    \end{align*}
    in $|A(\mylist{c})|$ such that
    \begin{align*}
      \beta\cdot(\mylist{h},m,n)=\beta'\forces_{\mylist{\gamma'}} b':A(\mylist{c'}),
    \end{align*}
    where we note that 
    \begin{align*}
      \beta\cdot(\mylist{h},m,n)\;=\;\lscott\judge{\Gamma,x:A}{A}\rscott_{(\mylist{h},m,n)}(\beta)\;=\;\lscott\judge{\Gamma}{A}\rscott_{\hat{\tau}_{\Gamma;x:A;A}(\mylist{h},m,n)}(\beta);\text{ and}
    \end{align*}
  \item $l$ is a propositional equality witnessing the equation (via 1-truncation these
    are the same)
    \begin{align*}
      f\cdot(\mylist{h},m,n)\;=\;|\judge{\Gamma,x,y:A}{\idn{A}{}(x,y)}|_{(\mylist{h},m,n)}(f)
      & = f'.
    \end{align*}
  \end{itemize}
\end{description}

Next, we define the functor $\lscott\Gamma,x,y:A \vdash \idn{A}{}(x,y)\rscott$\/ 
assigning realizers to identity terms. Given an object $(\mylist{c},\mylist{\gamma},
a,\alpha,b,\beta,f)$, we set
\[ \forces_{\mylist{\gamma},\alpha,\beta}f:\idn{A(\mylist{c})}{}(a,b)
\quad\text{ iff }\quad\alpha\cdot(1_{\mylist{c}},f)=\beta.\]
Thus, regarded as a presheaf, this functor is a \emph{subterminal} presheaf, i.e. a subpresheaf
of the presheaf with constant value 1. 
When we need a name for the (by definition unique) realizer of $f$\/ we shall use the 
symbol $\star$.
Note that when $f$\/ is realized, then in particular 
\begin{align*}
  (1_{\mylist{c}},f) : (\mylist{c},\mylist{\gamma},a,\alpha)\to(\mylist{c},\mylist{\gamma},b,\beta)
\end{align*}
is an arrow in $\lscott\Gamma, x:A\rscott$.

We must check functoriality, which in this case amounts to showing that for an arrow

\begin{align*}
  (\mylist{h},m,n,l):(\mylist{c},\mylist{\gamma},a,\alpha,b,\beta,f)\to(\mylist{c'},\mylist{\gamma'},a',\alpha',b',\beta',f')
\end{align*}
in $\|\judge{\Gamma,x,y:A}{\idn{A}{}(x,y)}\|$,

\begin{align*}
  \forces_{\mylist{\gamma},\alpha,\beta}f:\idn{A(\mylist{c})}{}(a,b) &
  \text{ implies }\forces_{\mylist{\gamma}',\alpha',\beta'}f':\idn{A(\mylist{c}')}{}(a',b').
\end{align*} 

So assume that $\forces_{\mylist{\gamma},\alpha,\beta}f:\idn{A(\mylist{c})}{}(a,b)$\/ holds.
First, we observe that the diagram

\begin{align*}
  \xy
  {\ar^{(\mylist{h},m)}(0,20)*+{(\mylist{c},\mylist{\gamma},a)};(50,20)*+{(\mylist{c'},\mylist{\gamma'},a')}};
  {\ar_{(1_{\mylist{c}},f)}(0,20)*+{(\mylist{c},\mylist{\gamma},a)};(0,0)*+{(\mylist{c},\mylist{\gamma},b)}};
  {\ar_{\hat{\tau}_{\Gamma;x:A;A}(\mylist{h},m,n)}(0,0)*+{(\mylist{c},\mylist{\gamma},b)};(50,0)*+{(\mylist{c'},\mylist{\gamma'},b')}};
  {\ar_{(1_{\mylist{c'}},f\cdot(\mylist{h},m,n))}@/_1pc/(50,20)*+{(\mylist{c'},\mylist{\gamma'},a')};(50,0)*+{(\mylist{c'},\mylist{\gamma'},b')}};
  {\ar^{(1_{\mylist{c'}},f')}@/^1pc/(50,20)*+{(\mylist{c'},\mylist{\gamma'},a')};(50,0)*+{(\mylist{c'},\mylist{\gamma'},b')}};
  {(50,10)*+{=}};
  \endxy
\end{align*}
in $\|\judge{\Gamma}{A}\|$ commutes.  Thus, to see that 
$\forces_{\mylist{\gamma}',\alpha',\beta'}f':\idn{A(\mylist{c})}{}(a',b')$
we reason as follows:
\begin{align*}
  \alpha'\cdot(1_{\mylist{c'}},f') & =
  \bigl(\alpha\cdot(\mylist{h},m)\bigr)\cdot(1_{\mylist{c'}},f')\\
  & =
  \bigl(\alpha\cdot(1_{\mylist{c}},f)\bigr)\cdot\hat{\tau}_{\Gamma;x:A;A}(\mylist{h},m,n)\\
  & = \beta\cdot\hat{\tau}_{\Gamma;x:A;A}(\mylist{h},m,n)\\
  & = \beta'.
\end{align*}

Finally, we must verify that this construction is stable under reindexing along context morphisms.
This amounts to verifying that, for a context morphism $\lscott \mylist{m} \rscott:\lscott \Delta
\rscott \to \lscott \Gamma\rscott$, the diagram
\[
\xymatrix{
\|\Delta,x,y:A[\mylist{m}],z:\idn{A[\mylist{m}]}{}(x,y)\| \ar[rr] \ar[drr]_\forces
&& \|\Gamma,x,y:A,z:\idn{A}{}(x,y)\| \ar[d]^\forces\\
&& \sets
}
\]
commutes. This, however, is an immediate consequence of the definition of the realizability 
relation and the fact that the horizontal map induced by the context morphism $\lscott \mylist{m}
\rscott$\/ preserves the functorial action on the realizers.

\subsubsection{Introduction rule}

Consider next the term formation rule

\begin{prooftree}
  \AxiomC{$\judge{\Gamma}{A}$}
  \UnaryInfC{$\judge{\Gamma,x:A}{\rr(x):\idn{A}{}(x,x)}$}
\end{prooftree}
Given an object $(\mylist{c},\mylist{\gamma},a,\alpha)$ of
$\lscott\Gamma, x:A\rscott$ we must show that 
\begin{align*}
  \forces_{\mylist{\gamma},\alpha}\rr(a):\idn{A(\mylist{c})}{}(a,a).
\end{align*}
This holds if and only if 
\begin{align*}
  \forces_{\mylist{\gamma},\alpha,\alpha}\rr(a):\idn{A(\mylist{c})}{}(a,a)
\end{align*}
which holds if and only if 
$\alpha\cdot(1_{\mylist{c}},\rr(a))=\alpha$.  But the latter is
trivially true.

\subsubsection{Elimination rule}

Next, we treat the elimination rule
\begin{prooftree}
  \AxiomC{$\judge{\Gamma,x,y:A,z:\idn{A}{}(x,y)}{B(x,y,z)}$}
  \noLine
  \UnaryInfC{$\judge{\Gamma,x:A}{\varphi:B(x,x,\rr(x))}$}
  \UnaryInfC{$\judge{\Gamma,x,y:A,z:\idn{A}{}(x,y)}{\Jelim{}(\varphi,x,y,z):B(x,y,z)}$}
\end{prooftree}
We assume given the interpretation of the
judgement $\judge{\Gamma,x,y:A,z:\idn{A}{}(x,y)}{B(x,y,z)}$, and, given an object
$(\mylist{c},\mylist{\gamma},a,\alpha,b,\beta,f,\star)$\/  of
$\lscott \Gamma,x,y:A,z:\idn{A}{}(x,y)\rscott$,  a realizer
$\varphi[\mylist{\gamma},\alpha]
\forces_{\mylist{\gamma},\alpha}\varphi(a):B(a,a,\rr(a))$.

We need to define a realizer 
$\xi\forces_{\mylist{\gamma},\alpha,\beta,\star}\Jelim{}(\varphi,a,b,f):B(a,b,f)$.
Actually, we shall define a functor which sends realizers $\varphi[\mylist{\gamma},\alpha]$\/ 
to realizers $\xi$\/ as above.

So consider an object
$(\mylist{c},\mylist{\gamma},a,\alpha,b,\beta,f,\star)$\/  of
$\lscott \Gamma,x,y:A,z:\idn{A}{}(x,y)\rscott$. We begin by constructing a map
\begin{align*}
  (\mylist{c},\mylist{\gamma},a,\alpha,a,\alpha,1_{a},\star) \to (\mylist{c},\mylist{\gamma},a,\alpha,b,\beta,f,\star)
\end{align*}
in $\lscott \Gamma,x,y:A,z:\idn{A}{}(x,y)\rscott$.  For this, observe that there is a term
\begin{align*}
  \Jelim{}(\rr(\rr(x)),a,b,f) :\idn{\idn{A(\mylist{c})}{}(a,b)}{}\bigl(1_{a}\cdot(1_{a},f),f\bigr)
\end{align*}
and therefore a morphism
\begin{align*}
(1_{\mylist{c}},1_{a},f,\Jelim{}(\rr(\rr(x)),a,b,f)):(\mylist{c},\mylist{\gamma},a,\alpha,a,\alpha,1_{a})\to(\mylist{c},\mylist{\gamma},a,\alpha,b,\beta,f)
\end{align*}
in $\|x,y:A,z:\idn{A}{}(x,y)\|$, where we have used the fact
that
\begin{align*}
  \alpha\cdot(1_{\mylist{c}},1_{a},f) & =
  \alpha\cdot\hat{\tau}_{\Gamma;x:A;A}(1_{\mylist{c}},1_{a},f)\\
  & = \alpha\cdot(1_{\mylist{c}},f)\\
  & = \beta.
\end{align*}

So we have constructed a morphism
\begin{align*}
  (1_{\mylist{c}},1_{a},f,\Jelim{}(\rr(\rr(x)),a,b,f)):(\mylist{c},\mylist{\gamma},a,\alpha,a,\alpha,1_{a},\star)\to(\mylist{c},\mylist{\gamma},a,\alpha,b,\beta,f,\star).
\end{align*}
Then we observe that
\begin{align*}
  \Jelim{}\bigl( \rr(\varphi(x)),a,b,f\bigr):\idn{B(a,b,f)}{}\bigl(\varphi(a)\cdot\bigl(1_{\mylist{c}},1_{a},f,\Jelim{}(\rr(\rr(x)),a,b,f)\bigr),\Jelim{}(\varphi,a,b,f)\bigr),
\end{align*}
and therefore we have an arrow
\begin{align*}
  \expand{f}{\varphi}:(\mylist{c},\mylist{\gamma},a,\alpha,a,\alpha,1_{a},\star,\varphi(a)) \to (\mylist{c},\mylist{\gamma},a,\alpha,b,\beta,f,\star,\Jelim{}(\varphi,a,b,f))
\end{align*}
in $\|\Gamma, x,y:A,z:\idn{A}{}(x,y),v:B(x,y,z)\|$ given by
\begin{align*}
  \expand{f}{\varphi} & =_{\textrm{def}} (1_{\mylist{c}},1_{a},f,\Jelim{}(\rr(\rr(x)),a,b,f),\Jelim{}(\rr(\varphi(x)),a,b,f)).
\end{align*}
Therefore, 
\begin{align*}
  \varphi[\mylist{\gamma},\alpha]\cdot(\expand{f}{\varphi})\forces_{\mylist{\gamma},\alpha,\beta,\star}\Jelim{}(\varphi,a,b,f):B(\mylist{c},a,b,f)
\end{align*}
and we define
\begin{align*}
  \Jelim{}(\varphi,a,b,f)[\mylist{\gamma},\alpha,\beta,\psi] & =_{\textrm{def}} \varphi[\mylist{\gamma},\alpha]\cdot(\expand{f}{\varphi}).
\end{align*}

It remains to be seen that this is functorial. Suppose given an arrow
\begin{align*}
  \theta=(\mylist{h},m,n,l):(\mylist{c},\mylist{\gamma},a,\alpha,b,\beta,f,\star)\to(\mylist{c}',\mylist{\gamma}',a',\alpha',b',\beta',f',\star).
\end{align*}
Then to show functoriality we need to prove that 
\begin{align}\label{eq:coherence_for_J}
  \Jelim{}(\varphi,a,b,f)[\mylist{\gamma},\alpha,\beta,\star]\cdot\bigl(\theta, \Jelim{}(\varphi,a',b',f')|(\mylist{h},m,n,l)\bigr)
  & =\Jelim{}(\varphi,a',b',f')[\mylist{\gamma}',\alpha',\beta',\star].
\end{align}
Well, we first observe that there is a commutative square
\begin{align}\label{eq:expand}
  \begin{minipage}{1.0\linewidth}
    \xy
    {\ar^{(\mylist{h},m,m',m'',\varphi|(\mylist{h},m,m',m''))}
      (0,20)*+{(\mylist{c},a,a,1_a,\varphi(a))};(75,20)*+{(\mylist{c'},a',a',1_{a'},\varphi(a'))}};
    {\ar_{\expand{f}{\varphi}}(0,20)*+{(\mylist{c},a,a,1_a,\varphi(a))};(0,0)*+{(\mylist{c},a,b,f,\Jelim{}(\varphi,a,b,f))}};
    {\ar^{\expand{f'}{\varphi}}(75,20)*+{(\mylist{c'},a',a',1_{a'},\varphi(a'))};(75,0)*+{(\mylist{c'},a',b',f',\Jelim{}(\varphi,a',b',f'))}};
    {\ar_{(\theta,\Jelim{}(\varphi,a,b,f)|\theta)}(0,0)*+{(\mylist{c},a,b,f,\Jelim{}(\varphi,a,b,f))};(75,0)*+{(\mylist{c'},a',b',f',\Jelim{}(\varphi,a',b',f'))}}
    \endxy
  \end{minipage}
\end{align}
where $m'$\/ is the composite, arising via weakening,
\begin{align*}
  \xy
  {\ar^{a \dagger
      (\mylist{h},m)}(0,0)*+{a\cdot(\mylist{h},m)};(30,0)*+{a\cdot\mylist{h}}};
  {\ar^{m}(30,0)*+{a\cdot\mylist{h}};(50,0)*+{a}};
  \endxy
\end{align*}
and similarly $m''$\/ is
\begin{align*}
  \xy
  {\ar^{1_a \dagger
      (\mylist{h},m,m')}(0,0)*+{1_{a}\cdot(\mylist{h},m,m')};(40,0)*+{1_{a}\cdot(\mylist{h},m)}};
  {\ar@{=}(40,0)*+{1_{a}\cdot(\mylist{h},m)};(60,0)*+{1_{a'}.}};
  \endxy
\end{align*}

Then, writing $\theta'=(\mylist{h},m,m',m'')$, we calculate

\begin{align*}
  \Jelim{}(\varphi,a,b,f)[\mylist{\gamma},\alpha,\beta,\star]\cdot
\bigl(\theta,\Jelim{}(\varphi,a,b,f)|\theta\bigr)
  & =
  \varphi[\mylist{\gamma},\alpha]\cdot(\expand{f}{\varphi})\cdot(\theta,\Jelim{}(\varphi,a,b,f)|\theta)\\
  & =
  \varphi[\mylist{\gamma},\alpha]\cdot(\theta',\varphi|\theta')\cdot(\expand{f'}{\varphi})\\
 & = \varphi[\mylist{\gamma},\alpha] \cdot (\mylist{h},m,\varphi|(\mylist{h},m))) \\
  & = \varphi[\mylist{\gamma}',\alpha']\cdot(\expand{f'}{\varphi})\\
  & = \Jelim{}(\varphi,a',b',f')[\mylist{\gamma}',\alpha',\beta',\star].
\end{align*}

Here, the first and last equalities are by definition of the realizers; the second equality is by virtue
of the fact that~\eqref{eq:expand} commutes; the third equality is by the definition of weakening,
and the fourth is by functoriality of the action on realizers.

\subsubsection{Conversion}

The conversion rule
\begin{prooftree}
  \AxiomC{$$}
  \UnaryInfC{$\judge{\Gamma,x:A}{\Jelim{}(\varphi,x,x,\rr(x)) = \varphi}$}
\end{prooftree}
is trivially seen to be satisfied using the definitions given above.

This concludes the proof of:

\begin{proposition}
  The CwF $\real{R}$\/ supports identity types.
\end{proposition}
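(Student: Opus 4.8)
The plan is to show that $\real{R}$ carries the categorical counterparts of all four identity-type rules together with their stability under substitution, by assembling the constructions of the preceding subsubsections. For \textbf{formation}, I would take the semantic identity type over $(\Gamma,x:A,y:A)$ to be the syntactic type $\idn{A}{}(x,y)$ equipped with the subterminal realizability functor $\lscott \Gamma,x,y:A\vdash \idn{A}{}(x,y)\rscott$ defined above, which realizes $f$ precisely when $\alpha\cdot(1_{\mylist{c}},f)=\beta$. Its functoriality is the computation $\alpha'\cdot(1_{\mylist{c}'},f')=\beta'$ already carried out, and stability under reindexing along a context morphism $\lscott\mylist{m}\rscott$ is immediate since the induced comparison functor preserves the action on realizers; this simultaneously supplies the semantic type former and its Beck--Chevalley condition.

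For \textbf{introduction}, I would check that the reflexivity term $\rr(x)$ is realized by the unique witness $\star$, since realization of $\rr(a)$ unwinds to the trivially valid equation $\alpha\cdot(1_{\mylist{c}},\rr(a))=\alpha$; naturality is automatic because realizers of identity terms are unique. For \textbf{elimination}, I would define the realizer of the $\Jelim{}$-term by transport, namely $\Jelim{}(\varphi,a,b,f)[\mylist{\gamma},\alpha,\beta,\star]=_{\textrm{def}}\varphi[\mylist{\gamma},\alpha]\cdot(\expand{f}{\varphi})$, where $\expand{f}{\varphi}$ is the canonical arrow built above from the higher terms $\Jelim{}(\rr(\rr(x)),a,b,f)$ and $\Jelim{}(\rr(\varphi(x)),a,b,f)$; this is well-typed precisely because transporting the realizer of $\varphi(a)$ along $\expand{f}{\varphi}$ lands in a realizer of $\Jelim{}(\varphi,a,b,f)$.

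The substantive verification is then the functoriality of this assignment, i.e. the coherence equation~\eqref{eq:coherence_for_J}. Here I would deduce the required identity from the commutative square~\eqref{eq:expand} relating $\expand{f}{\varphi}$ and $\expand{f'}{\varphi}$ along an arbitrary arrow $\theta=(\mylist{h},m,n,l)$, followed by the definition of weakening and the functoriality of $\varphi[\mylist{\gamma},\alpha]$, exactly along the displayed chain of equalities. For \textbf{conversion}, I would observe that when $b=a$ and $f=\rr(a)$ the arrow $\expand{\rr(a)}{\varphi}$ collapses to an identity (the constituent higher $\Jelim{}$-terms convert to reflexivities), so the assigned realizer of $\Jelim{}(\varphi,a,a,\rr(a))$ is definitionally $\varphi[\mylist{\gamma},\alpha]$; this gives $\Jelim{}(\varphi,x,x,\rr(x))=\varphi$. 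Collecting these four verifications, together with the already-noted stability clauses, yields all the data required for $\real{R}$ to support identity types.

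The main obstacle is the elimination case, and specifically the coherence~\eqref{eq:coherence_for_J}: unlike formation and introduction, whose realizers are forced to be the unique $\star$, the $\Jelim{}$-realizer is genuine transported data, so one must first manufacture the comparison arrow $\expand{f}{\varphi}$ out of iterated $\Jelim{}$-terms and then certify that the resulting transport is natural in $(\mylist{h},m,n,l)$. It is precisely here that the $1$-truncation axiom is indispensable, since it collapses the a priori merely propositional commutativity of~\eqref{eq:expand} into the definitional equality needed for functoriality.
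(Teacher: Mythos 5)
Your proposal is correct and follows essentially the same route as the paper, which likewise establishes the proposition by assembling the formation, introduction, elimination, and conversion verifications of the preceding subsubsections, with the elimination realizer defined by transport along $\expand{f}{\varphi}$ and its functoriality secured by the commutativity of~\eqref{eq:expand} via 1-truncation. Your identification of the elimination coherence as the only substantive step, the remaining clauses being forced by uniqueness of realizers of identity terms, matches the paper's emphasis exactly.
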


\subsection{Semantic dependent sums}

Consider a context $\lscott \Gamma\rscott$, a type $A$\/ over this context with comprehension
$\lscott \Gamma,x:A\rscott$\/ and furthermore a type $B$\/ over this comprehension, giving
a further comprehension $\lscott \Gamma,x:A,y:B(x)\rscott$. Thus we have a diagram
\[
\xymatrix{
\lscott \Gamma,x:A,y:B(x)\rscott \ar[r] \ar[d] & \lscott \Gamma,x:A\rscott \ar[d] \ar[r] 
& \lscott \Gamma\rscott \ar[d] \\
|\Gamma,x:A,y:B(x)| \ar[r] & |\Gamma,x:A| \ar[r] & |\Gamma|.
}
\]
We wish to define the semantic dependent sum type in context $\lscott \Gamma\rscott$.
To this end, we first form the syntactic type $\Gamma \vdash \sum_{x:A}B(x)$,
giving rise to the pullback diagram
\[
\xymatrix{
\|\Gamma,v:\sum_{x:A}B(x)\| \ar[r] \ar[d] & |\Gamma,v:\sum_{x:A}B(x)| \ar[d] \\
\lscott \Gamma \rscott \ar[r] & |\Gamma|.
}
\]
We then must specify the realizability presheaf 
\[ \|\Gamma,v:\sum_{x:A}B(x)\| \to \sets.\]
For $(\mylist{c},\mylist{\gamma})$\/ an object of
$\lscott\Gamma\rscott$, we define
\begin{align*}
  \upsilon\forces_{\mylist{\gamma}}p:\sum_{x:A(\mylist{c})}B(\mylist{c},x)
\end{align*}
if and only if $\upsilon$ is a pair $(\upsilon_{0},\upsilon_{1})$ such
that
\begin{align*}
  \upsilon_{0}&\forces_{\mylist{\gamma}}\pi_{0}(p):A(\mylist{c}),\text{
    and}\\
  \upsilon_{1}&\forces_{\mylist{\gamma},\upsilon_{0}}\pi_{1}(p):B(\mylist{c},\pi_{0}(p)).
\end{align*}

\subsubsection{Reindexing}

Suppose we are given a map
$(\mylist{h},m):(\mylist{c},\gamma,p)\to(\mylist{c}',\gamma',p')$ in the groupoid 
$\|\judge{\Gamma}{\sum_{x:A}B(x)}\|$ together with a realizer
$\upsilon\forces_{\mylist{\gamma}}p:\sum_{x:A(\mylist{c})}B(\mylist{c},x)$.
Then we have
\begin{align*}
  \upsilon_{0}\cdot(\mylist{h},\xi_{0})\forces_{\mylist{\gamma}'}\pi_{0}(p'):A(\mylist{c}')
\end{align*}
where $\xi_{0}$ is the composite
\begin{align*}
  \xy
  {\ar^{\Jelim{}^{\sigma}(\rr\pi_{0}(v),\mylist{c},\mylist{c}',\mylist{h},p)}(0,0)*+{\pi_{0}(p)\cdot\mylist{h}};(50,0)*+{\pi_{0}(p\cdot\mylist{h})}};
  {\ar^{\Jelim{}(\rr\pi_{0}(x),p\cdot\mylist{h},p',m)}(50,0)*+{\pi_{0}(p\cdot\mylist{h})};(100,0)*+{\pi_{0}(p')}};
  \endxy
\end{align*}
Similarly, we define $\xi_{1}$ to be the composite
\[
\xymatrix{
\pi_{1}(p)\cdot(\mylist{h},\xi_0) \ar[r]^-{\xi'} &
\pi_{1}(p\cdot\mylist{h})\cdot(1_{\mylist{c}'},\Jelim{}(\rr\pi_{0}x,p\cdot \mylist{h},p',m))
\ar[rr]^-{\Jelim{}(\rr\pi_{0}x,p\cdot\mylist{h},p',m))}&&
\pi_{1}(p')
}
\]
where $\xi'=\Jelim{}^{\sigma}(\rr\pi_{1}(v)\cdot(1_{\mylist{x}},\Jelim{}(\rr\pi_{0}x,v,w,u)),\mylist{c},\mylist{c}',p)$.

Then we have that 
\begin{align*}
  \upsilon_{1}\cdot(\mylist{h},\xi_{0},\xi_{1})\forces_{\mylist{\gamma}',\upsilon_{0}\cdot(\mylist{h},\xi_{0})}\pi_{1}(p'):B(\mylist{c}',\pi_{0}(p'))
\end{align*}
and we define
\begin{align*}
  \upsilon\cdot(\mylist{h},m) & =_{\textrm{def}} \bigl(\upsilon_{0}\cdot(\mylist{h},\xi_{0}),\upsilon_{1}\cdot(\mylist{h},\xi_{0},\xi_{1})\bigr).
\end{align*}
Functoriality of reindexing is a routine verification using the truncation rule.

Next, we must show that the formation of the dependent sum type is stable under substitution. 
It suffices to show that, for a context morphism $m:\Delta \to \Gamma$, the diagram
\[
\xymatrix{
\|\Delta,v:\sum_{A[m]}B[m]\| \ar[r] \ar[dr]_{\forces} & \|\Gamma,v:\sum_{A}B\| \ar[d]^\forces\\
& \sets
}
\]
is commutative. To this end, consider an object $(\mylist{d},\mylist{\delta},p)$\/ and note that we have
\begin{eqnarray*}
\upsilon \forces_{m[\mylist{\delta}]} p: \sum_{x:A[m]}B[m]  
& \Leftrightarrow & \upsilon_0 \forces_{m[\mylist{\delta}]} \pi_0(p):A[m] \text{ and }
\upsilon_1 \forces_{m[\mylist{\delta}], \upsilon_0} B[m](\pi_0(p)) \\
& \Leftrightarrow & \upsilon_0 \forces_{\mylist{\delta}} \pi_0(p):A \text{ and }
\upsilon_1 \forces_{\mylist{\delta},\upsilon_0} \pi_1(p):B(\pi_0(p))
\end{eqnarray*}
where we have used Lemma~\ref{lem:real_subst} regarding the behavior of realizers under substitution.
\subsubsection{Introduction rule}

For the introduction rule 
\begin{prooftree}
  \AxiomC{$\judge{\Gamma}{a:A}$}
  \AxiomC{$\judge{\Gamma}{b:B(a)}$}
  \BinaryInfC{$\judge{\Gamma}{\pair(a,b):\sum_{x:A}B(x)}$}
\end{prooftree}
we reason as follows.  For an object $(\mylist{c},\mylist{\gamma})$ of
$\lscott\Gamma\rscott$ we define
\begin{align*}
  \pair(a,b)[\mylist{\gamma}] & =_{\textrm{def}} (a[\mylist{\gamma}],b[\mylist{\gamma}]).
\end{align*}
For the coherence property note that, given
$\mylist{h}:(\mylist{c},\mylist{\gamma})\to(\mylist{c}',\mylist{\gamma}')$, we have
\begin{align*}
  \pair(a,b)[\mylist{\gamma}]\cdot(\mylist{h},\pair(a,b)|\mylist{h}) &
  =
  (a[\mylist{\gamma}],b[\mylist{\gamma}])\cdot(\mylist{h},\pair(a,b)|\mylist{h})\\
  & = \bigl(a[\mylist{\gamma}]\cdot(\mylist{h},\xi_{0}),b[\mylist{\gamma}]\cdot(\mylist{h},\xi_{0},\xi_{1})\bigr).
\end{align*}
Since, in this case, the truncation rule gives that
$\xi_{0}=a|\mylist{h}$ and $\xi_{1}=b|(\mylist{h},\xi_{0})$, the
required equation is then immediate, and it also follows without difficulty that this term formation
operation commutes with substitution with respect to context morphisms.

\subsubsection{Elimination rule}

For the elimination rule
\begin{prooftree}
  \AxiomC{$\judge{\Gamma,z:\sum_{x:A}B(x)}{C(z)}$}
  \AxiomC{$\judge{\Gamma,x:A,y:B(x)}{\psi:C(\pair(x,y))}$}
  \BinaryInfC{$\judge{\Gamma,z:\sum_{x:A}B(x)}{\RSigma(\psi,z):C(z)}$}
\end{prooftree}
we assume given $(\mylist{c},\mylist{\gamma},p,\upsilon)$ in
$\lscott\Gamma,z:\sum_{x:A}B(x)\rscott$.  Then 
\begin{align*}
  \psi[\mylist{\gamma},\upsilon_{0},\upsilon_{1}]\forces_{\mylist{\gamma},\upsilon_{0},\upsilon_{1}}\psi(\mylist{c},\pi_{0}p,\pi_{1}p):C(\mylist{c},\pair(\pi_{0}p,\pi_{1}p)),
\end{align*}
and so
\begin{align*}
  \psi[\mylist{\gamma},\upsilon_{0},\upsilon_{1}]\forces_{\mylist{\gamma},\upsilon}\psi(\mylist{c},\pi_{0}p,\pi_{1}p):C(\mylist{c},\pair(\pi_{0}p,\pi_{1}p)).
\end{align*}
Now, we have the term
\begin{prooftree}
  \AxiomC{$\judge{\Gamma,z:\sum_{x:A}B(x)}{\idn{\sum_{x:A}B(x)}{}\bigl(\pair(\pi_{0}z,\pi_{1}z),z\bigr)}$}
  \noLine
  \UnaryInfC{$\judge{\Gamma,x:A,y:B(x)}{\rr(\pair(x,y)):\idn{\sum_{x:A}B(x)}{}\bigl(\pair(x,y),\pair(x,y)\bigr)}$}
  \UnaryInfC{$\judge{}{\RSigma(\rr(\pair(x,y)),p):\idn{\sum_{x:A(\mylist{c})}B(\mylist{c},x)}{}\bigl(\pair(\pi_{0}p,\pi_{1}p),p\bigr)}$}
\end{prooftree}
Now, it is easily seen, by inspecting the definition of the reindexing
action on realizers for dependent sums, that in fact we have an arrow
\begin{align*}
  (1_{\mylist{c}},\RSigma(\rr(\pair(x,y),p)):(\mylist{c},\mylist{\gamma},\pair(\pi_{0}p,\pi_{1}p),\upsilon)\to(\mylist{c},\mylist{\gamma},p,\upsilon)
\end{align*}
in
$\lscott\judge{\Gamma}{\sum_{x:A}B(x)}\rscott_{(\mylist{c},\mylist{\gamma},p)}$.
Therefore,
\begin{align*}
  \psi[\mylist{\gamma},\upsilon_{0},\upsilon_{1}]\cdot(1_{\mylist{c}},\RSigma(\rr\pair(x,y),p),\RSigma(\rr\psi(\mylist{c},\pi_{0}z,\pi_{1}z),p)\forces_{\mylist{\gamma},\upsilon}\RSigma(\psi,p):C(\mylist{c},p).
\end{align*}
Here the term $\RSigma(\rr\psi(\mylist{c},x,y),p)$ is constructed
as follows:
\begin{prooftree}
  \AxiomC{$\judge{z:\sum_{x:A(\mylist{c})}B(\mylist{c},x)}{\idn{C(\mylist{c},z)}{}\bigl(\psi(\mylist{c},\pi_{0}z,\pi_{1}z)\cdot(1_{\mylist{c}},\RSigma(\rr(\pair(x,y)),z)),\RSigma(\psi(\mylist{c},x,y),z)\bigr)}$}
  \noLine
  \UnaryInfC{$\judge{x:A,y:B(x)}{\rr\psi(\mylist{c},x,y):\idn{C(\mylist{c},\pair(x,y))}{}\bigl(\psi(\mylist{c},x,y),\psi(\mylist{c},x,y)\bigr)}$}
  \UnaryInfC{$\judge{}{\RSigma(\rr\psi(\mylist{c},x,y),p):\idn{C(\mylist{c},p)}{}\bigl(\psi(\mylist{c},\pi_{0}p,\pi_{1}p)\cdot(1_{\mylist{c}},\RSigma(\rr(\pair(x,y)),p)),\RSigma(\psi(\mylist{c},x,y),p)\bigr)}$.}
\end{prooftree}
So we define
\begin{align*}
  \RSigma(\psi,p)[\mylist{\gamma},\upsilon] & =_{\textrm{def}} \psi[\mylist{\gamma},\upsilon_{0},\upsilon_{1}]\cdot(1_{\mylist{c}},\RSigma(\rr\pair(x,y),p),\RSigma(\rr\psi(\mylist{c},\pi_{0}z,\pi_{1}z),p)
\end{align*}

\subsubsection{Conversion rule}

The conversion rule
\begin{prooftree}
  \AxiomC{$$}
  \UnaryInfC{$\judge{\Gamma,x:A,y:B(x)}{\RSigma(\psi,\pair(x,y)) \;=\;\psi(x,y):C(\pair(x,y))}$}
\end{prooftree}
is trivial given the definitions above.

This concludes the proof of:
\begin{proposition}
  The CwF $\real{R}$\/ supports dependent sums.
\end{proposition}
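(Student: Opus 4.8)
The plan is to assemble the constructions of the preceding subsubsections into a verification of the CwF axioms for dependent sums. The semantic type former $\sum_{x:A}B(x)$, its realizer presheaf, the reindexing action on realizers, and the stability of that action under substitution along context morphisms have all already been established, so what remains is to confirm that the interpretations of the introduction and elimination terms are genuine semantic terms --- that is, sections of the appropriate display maps satisfying the coherence condition \eqref{eq:weak_C2} --- that they descend to the syntactic sections of $\synt$, and that the conversion equation holds definitionally in $\real{R}$. Since the syntactic halves of all these sections are inherited verbatim from the syntactic CwF, the content lies entirely at the level of realizers.

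For the introduction term I would simply record the coherence computation begun in the introduction subsubsection: with $\pair(a,b)[\mylist{\gamma}]=(a[\mylist{\gamma}],b[\mylist{\gamma}])$, the $1$-truncation rule forces $\xi_0=a|\mylist{h}$ and $\xi_1=b|(\mylist{h},\xi_0)$, so the required coherence equation reduces to the coherence equations for the component terms $a$ and $b$, which hold by hypothesis. Substitution-compatibility for $\pair$ then follows at once from Lemma~\ref{lem:real_subst}.

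The main obstacle will be functoriality of the elimination term $\RSigma$; this is the analogue for dependent sums of equation \eqref{eq:coherence_for_J} in the identity-type case, and I expect it to require the same style of argument. The key input is the transport morphism $(1_{\mylist{c}},\RSigma(\rr(\pair(x,y)),p))$ connecting $(\mylist{c},\mylist{\gamma},\pair(\pi_0 p,\pi_1 p),\upsilon)$ to $(\mylist{c},\mylist{\gamma},p,\upsilon)$ in the fibre over $(\mylist{c},\mylist{\gamma})$. I would first exhibit, for a given arrow $(\mylist{h},m):(\mylist{c},\mylist{\gamma},p)\to(\mylist{c}',\mylist{\gamma}',p')$, a commutative square relating these transport morphisms at the two endpoints to the morphism induced by $(\mylist{h},m)$ on the corresponding pair-objects; commutativity holds on the nose when $(\mylist{h},m)$ is a reflexivity, and hence in general by producing a sequential $\Jelim{}$-term and invoking $1$-truncation. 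Chasing the defining formula $\RSigma(\psi,p)[\mylist{\gamma},\upsilon]=\psi[\mylist{\gamma},\upsilon_0,\upsilon_1]\cdot(1_{\mylist{c}},\RSigma(\rr(\pair(x,y)),p),\RSigma(\rr\psi,p))$ around this square, while using functoriality of the realizer assignment for $\psi$, then yields the needed coherence.

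Finally, the conversion rule is immediate: on the fibre over $\pair(a,b)$ the transport term $\RSigma(\rr(\pair(x,y)),\pair(a,b))$ is a reflexivity, so the formula for $\RSigma(\psi,\pair(a,b))[\mylist{\gamma},\upsilon]$ collapses to $\psi[\mylist{\gamma},\upsilon_0,\upsilon_1]$, matching the interpretation of $\psi(a,b)$. Substitution-stability of $\RSigma$ follows from the pullback \eqref{eq:reind2} together with the already-established substitution-stability of the type former, exactly as in the dependent-product case.
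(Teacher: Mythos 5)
Your proposal is correct and follows essentially the same route as the paper: the proposition is proved by assembling the type formation, reindexing, introduction, elimination, and conversion constructions of the preceding subsubsections, with the introduction coherence reduced to that of the component terms via $1$-truncation and the conversion rule collapsing because the transport term $\RSigma(\rr(\pair(x,y)),\pair(a,b))$ is a reflexivity. Your explicit sketch of the functoriality check for the $\RSigma$-realizer (the analogue of \eqref{eq:coherence_for_J}) is a point the paper leaves as a routine verification, and your argument for it is the intended one.
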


\subsection{Semantic natural numbers}

Although it is in principle possible to consider different interpretations of
the type of natural numbers (when suitable notions of realizability
are provided) we will simply fix a uniform interpretation of natural
numbers.  Observe that, in the groupoid model built over the free
groupoid on $G$ the type of natural numbers $\nat$ is interpreted as
the discrete groupoid of natural numbers.  In particular, in this
interpretation, each $t:\nat$ is sent to a genuine (external) natural
number $n_{t}$.  This $n_{t}$ can be represented by theoretically by
the corresponding number, which we denote by $\#(t)$.  Thus, for
$t:\nat$, we have a numeral $\#(t):\nat$.  By the construction of the
groupoid model, this gives an endofunctor $\#:|\nat|\to|\nat|$ and we define
\begin{align*}
  \tau\forces t :\nat \quad\text{ if and only if
  }\tau:\idn{\nat}{}(t,\# t).
\end{align*}
Note that for any $f:\idn{\nat}{}(s,t)$ we have $\# f=\rr(\# s)$ since
$\nat$ is interpreted as a discrete groupoid under the groupoid
interpretation.

By the functoriality of $\#$ this determines the interpretation
$\lscott\nat\rscott$ of the type of natural numbers in $\real{R}$.
Observe that $\#(n)=n$ when $n$ is a numeral.  The successor operation
gives an endofunctor $|\successor|:|\nat|\to|\nat|$ and, by inspection
of the groupoid interpretation, this functor
commutes with $\#$ in the sense that
$\#\circ|\successor|=|\successor|\circ\#$.  Using these facts it is
possible to describe the interpretations of the terms coming from the
introduction rules.  First, we take $\lscott \zero\rscott$ to be given by the realizer
$\rr(\zero)\forces\zero:\nat$.  Next, given $\tau\forces t:\nat$, we
have, by the aforementioned facts, that $|S|(\tau)\forces S(t):\nat$.
As such, we take this term to be the realizer part of the interpretation of
$\lscott\successor(t)\rscott$.  The interpretation of the elimination
rule will require a little bit more care.

\subsubsection{Elimination rule}

We now will show that the elimination rule
\begin{prooftree}
  \AxiomC{$\judge{\Gamma}{b:B(\zero)}$}
  \AxiomC{$\judge{\Gamma,x:\nat,y:B(x)}{g(x,y):B\bigl(\successor(x)\bigr)}$}
  \BinaryInfC{$\judge{\Gamma,n:\nat}{\rec(b,g,n):B(n)}$}
\end{prooftree}
is valid in the interpretation. 

Let $(\mylist{c},\mylist{\gamma},n,\nu)$ in $\lscott\Gamma,n:\nat\rscott$ be
given.  In what follows we will suppress $\mylist{c}$ as much as
possible in order to avoid notational clutter.  

We will construct the interpretation of the elimination term by induction on
$\# n$.  In the base case we take
$\rec(b,g)[\mylist{\gamma},\nu]\forces_{\mylist{\gamma},\nu}\rec(b,g,n):B(\mylist{c},n)$
to be given by
\begin{align*}
  b[\mylist{\gamma}]\cdot\bigl(\rec(b,g)|(1_{\mylist{c}},\nu^{-1})\bigr)\forces_{\mylist{\gamma},\nu}\rec(n,b,g):B(\mylist{c},n).
\end{align*}

When $\# n=\successor(\mathtt{m})$ for $\mathtt{m}$ a
numeral, we have by induction hypothesis, 
\begin{align*}
  \rec(b,g)[\mylist{\gamma},\rr(\mathtt{m})]\forces_{\mylist{\gamma},\rr(\mathtt{m})}\rec(b,g,\mathtt{m}):B(\mylist{c},\mathtt{m}).
\end{align*}
By the conversion rule
$\rec(b,g,\successor(\mathtt{m}))=g(\mathtt{m},\rec(b,g,\mathtt{m}))$
and therefore
\begin{align*}
  g\bigl[\mylist{\gamma},\rr(\mathtt{m}),\rec(b,g)[\mylist{\gamma},\rr(\mathtt{m})]\bigr]\forces_{\mylist{\gamma},\rr(\mathtt{m}),\rec(b,g)[\mylist{\gamma},\rr(\mathtt{m})]}\rec(b,g,\successor(\mathtt{m})):B(\mylist{c},\successor(\mathtt{m})),
\end{align*}
which holds if and only if 
\begin{align*}
  g\bigl[\mylist{\gamma},\rr(\mathtt{m}),\rec(b,g)[\mylist{\gamma},\rr(\mathtt{m})]\bigr]\forces_{\mylist{\gamma},\rr(\successor(\mathtt{m}))}\rec(b,g,\successor(\mathtt{m})):B(\mylist{c},\successor(\mathtt{m})).
\end{align*}
As such, we may define 
\begin{align*}
  \rec(b,g,n)[\mylist{\gamma}] \; =_{\textrm{def}}\; g\bigl[\mylist{\gamma},\rr(\mathtt{m}),\rec(b,g)[\mylist{\gamma},\rr(\mathtt{m})]\bigr]\cdot\bigl(\rec(b,g)|(1_{\mylist{c}},\nu^{-1})\bigr)\forces_{\mylist{\gamma},\nu}\rec(n,b,g):B(\mylist{c},n).
\end{align*}

\subsubsection{Conversion rule}

The conversion rule $\rec(b,g,\zero)=b$ is immediate.  For the conversion rule
\begin{align*}
  \judge{\Gamma,n:\nat}{\rec\bigl(b,g,\successor(n)\bigr) = g\bigl(n,\rec\bigl(b,g,n\bigr)\bigr) :B\bigl(\successor(n)\bigr)}
\end{align*}
we proceed by induction on $\# n$.  In the base case, where $\#
n=\zero$, we have 
\begin{align*}
  \rec(b,g)[\mylist{\gamma},|S|(\nu)] & =
  g\bigl[\mylist{\gamma},\rr(\zero),b[\mylist{\gamma}]\bigr]\cdot\bigl(\rec(b,g)|(1_{\mylist{c}},|S|(\nu)^{-1})\bigr)\\
  & =
  g\bigl[\mylist{\gamma},\rr(\zero),b[\mylist{\gamma}]\bigr]\cdot\bigl(g|(1_{\mylist{c}},\nu^{-1},\rec(b,g)|(1_{\mylist{c}},\nu^{-1}))\bigr)\\
  & =
  g\bigl[\mylist{\gamma},\nu,b[\mylist{\gamma}]\cdot(\rec(b,g)|(1_{c},\nu^{-1}))\bigr]\\
  & = g\bigl[\mylist{\gamma},\nu,\rec(b,g)[\mylist{\gamma},\nu]\bigr],
\end{align*}
where the penultimate equation is by functoriality of the
interpretation of $g$ and the equation 
\begin{align*}
  \rec(b,g)|(1_{\mylist{c}},|S|(\nu)^{-1}) & = g|(1_{\mylist{c}},\nu^{-1},\rec(b,g)|(1_{\mylist{c}},\nu^{-1}))
\end{align*}
is by 1-truncation.  In the case where $\# n=\successor(\mathtt{m})$ for
$\mathtt{m}$ the equation follows from the induction hypothesis and
the same reasoning as in the base case.

This completes the proof of:

\begin{proposition}
  The CwF $\real{R}$\/ supports the type of natural numbers.
\end{proposition}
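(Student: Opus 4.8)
The plan is to assemble the data introduced in this subsection---the realizer presheaf for $\nat$, the realizers of $\zero$ and $\successor$, and the inductively defined recursor---into the type and term constructors demanded of a CwF supporting natural numbers, verifying for each its governing equation and its stability under substitution. First I would check that $\tau \forces t:\nat$ iff $\tau:\idn{\nat}{}(t,\#t)$ defines a functor $\|\Gamma,n:\nat\| \to \sets$. The realizers of $t$ form the hom-set $|\nat|(t,\#t)$, and for a morphism $k:n \to n'$ of $|\nat|$ we have $\#k = \rr(\#n)$ by discreteness of the free-groupoid interpretation, whence $\#n = \#n'$; reindexing along such a $k$ then sends a realizer $\tau$ of $n$ to $\tau\circ k^{-1}:\idn{\nat}{}(n',\#n')$, and functoriality is immediate, any residual coherence collapsing by $1$-truncation. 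This yields the semantic type $\lscott\nat\rscott$ together with its comprehension, and stability under substitution because $\#$ commutes with substitution into the closed type $\nat$.

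Next I would record that the realizers $\rr(\zero)\forces\zero:\nat$ and $|\successor|(\tau)\forces\successor(t):\nat$ constitute semantic terms. The coherence condition~\eqref{eq:weak_C2} is trivial for each: for $\zero$ because $\rr(\zero)$ is a reflexivity, and for the successor because $|\successor|$ commutes with $\#$ on the nose (so that $\#\circ|\successor| = |\successor|\circ\#$), whence the functorial action on realizers is respected. Stability under substitution is immediate for these constant and unary formers.

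The substance of the proof is the recursor $\rec(b,g,n)[\mylist{\gamma}]$, defined above by induction on the external numeral $\#n$. Here I would verify two things in tandem, both by induction on $\#n$. First, that the inductively defined realizer genuinely lies in the fibre over $\rec(b,g,n)$: at each stage the target type is matched using the conversion rule $\rec(b,g,\successor(\mathtt{m})) = g(\mathtt{m},\rec(b,g,\mathtt{m}))$ together with the already-noted fact that $\rr(\successor(\mathtt{m}))$ and $|\successor|(\rr(\mathtt{m}))$ realize the same term. Second, the coherence condition~\eqref{eq:weak_C2}: for an arrow $\mylist{h}:(\mylist{c},\mylist{\gamma},\nu)\to(\mylist{c}',\mylist{\gamma}',\nu')$ in $\lscott\Gamma,n:\nat\rscott$ one must show $\rec(b,g,n)[\mylist{\gamma}]\cdot(\mylist{h},\rec(b,g,n)|\mylist{h}) = \rec(b,g,n)[\mylist{\gamma}']$. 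Since $\#$ sends every morphism of $|\nat|$ to a reflexivity, $\#n = \#n'$, so both sides are computed by the same branch of the induction, and the equation reduces---via functoriality of the interpretations of $b$ and $g$ and the correction term $\rec(b,g)|(1_{\mylist{c}},\nu^{-1})$---to the corresponding coherences for $b$ and $g$, which hold by inductive hypothesis, the remaining identifications of reindexing maps being forced by $1$-truncation.

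Finally, the conversion rules $\rec(b,g,\zero) = b$ and $\rec(b,g,\successor(n)) = g(n,\rec(b,g,n))$ have already been checked above, the latter by the same induction on $\#n$, and stability of the recursor under a context morphism $\lscott\mylist{m}\rscott$ follows once more by induction on $\#n$ using Lemma~\ref{lem:real_subst} and the commutation of $\#$ with substitution. I expect the coherence condition~\eqref{eq:weak_C2} for the recursor to be the main obstacle: the realizer is defined by an external induction on $\#n$, yet the coherence must hold for an \emph{arbitrary} morphism of $\lscott\Gamma,n:\nat\rscott$, so the argument must carefully exploit the discreteness of the free-groupoid interpretation of $\nat$ (guaranteeing that both sides fall in the same inductive branch) and repeatedly invoke $1$-truncation to collapse the various reindexing and correction isomorphisms to definitional equalities.
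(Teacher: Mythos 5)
Your proposal matches the paper's own treatment: the same realizer clause $\tau:\idn{\nat}{}(t,\#t)$ built from the free-groupoid interpretation, the same realizers $\rr(\zero)$ and $|\successor|(\tau)$ using $\#\circ|\successor|=|\successor|\circ\#$, and the same definition of the recursor's realizer by external induction on $\#n$ with the correction term $\rec(b,g)|(1_{\mylist{c}},\nu^{-1})$, with the conversion rules checked by the same induction and $1$-truncation. If anything you are slightly more explicit than the paper about verifying the coherence condition~\eqref{eq:weak_C2} for the recursor (which the paper leaves implicit), but the argument is the same.
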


\appendix
\begin{landscape}

\section{Rules of type theory}\label{app:rules}

\subsubsection*{Structural rules}

~
{\small
\begin{center}
  \AxiomC{$\judge{\Gamma}{\mathcal{J}}$}
  \RightLabel{($\dag$)}
  \UnaryInfC{$\judge{\Delta,\Gamma}{\mathcal{J}}$}
  \DisplayProof
  \hspace{.5cm}
   \AxiomC{$a:A$}
  \AxiomC{$\judge{x:A,\Delta}{B(x)}$}
   \BinaryInfC{$\judge{\Delta[a/x]}{B(a)}$}
  \DisplayProof
  \hspace{.5cm}
  \AxiomC{$a:A$}
  \AxiomC{$\judge{x:A,\Delta}{b(x):B(x)}{}$}
  \BinaryInfC{$\judge{\Delta[a/x]}{b(a):B(a)}$}
  \DisplayProof
  \hspace{.5cm}
  \AxiomC{$A$}
  \UnaryInfC{$\judge{x:A,\Delta}{x:A}$}
  \DisplayProof
\end{center}}
($\dag$): $\mathcal{J}$ ranges over judgements and the
variables declared in $\Delta$ and $\Gamma$ are disjoint.

\subsubsection*{Equality rules}

\begin{center}
{\small
  \AxiomC{$A$}
  \UnaryInfC{$A=A$}
  \DisplayProof
  \hspace{.5cm}
  \AxiomC{$A=B$}
  \UnaryInfC{$B=A$}
  \DisplayProof
  \hspace{.5cm}
  \AxiomC{$A=B$}
  \AxiomC{$B=C$}
  \BinaryInfC{$A=C$}
  \DisplayProof
  \hspace{.5cm}
  \AxiomC{$a:A$}
  \UnaryInfC{$a=a:A$}
  \DisplayProof
  \hspace{.5cm}
  \AxiomC{$a=b:A$}
  \UnaryInfC{$b=a:A$}
  \DisplayProof
  \hspace{.5cm}
  \AxiomC{$a=b:A$}
  \AxiomC{$b=c:A$}
  \BinaryInfC{$a=c:A$}
  \DisplayProof
}
\end{center}
\vspace{.25cm}
\begin{center}
  {\small
    \AxiomC{$a=b:A$}
    \AxiomC{$\judge{x:A}{B(x)}$}
  \BinaryInfC{$B(a)=B(b)$}
  \DisplayProof
  \hspace{1cm}
  \AxiomC{$a=b:A$}
  \AxiomC{$\judge{x:A}{f(x):B(x)}$}
  \BinaryInfC{$f(a)=f(b):B(a)$}
  \DisplayProof
  \hspace{1cm}
  \AxiomC{$A=B$}
  \AxiomC{$a:A$}
  \BinaryInfC{$a:B$}
  \DisplayProof}
\end{center}

\subsubsection*{Formation rules}

\begin{center}
{\small
  \AxiomC{$\judge{x:A}{B(x)}$}
  \UnaryInfC{$\prod_{x:A}B(x)$}
  \DisplayProof
  \hspace{1cm}
  \AxiomC{$\judge{x:A}{B(x)}$}
  \UnaryInfC{$\sum_{x:A}B(x)$}
  \DisplayProof
  \hspace{1cm}
  \AxiomC{$a,b:A$}
  \UnaryInfC{$\judge{}{\idn{A}{}(a,b)}$}
  \DisplayProof
  \hspace{1cm}
  \AxiomC{}
  \UnaryInfC{$\judge{}{\nat}$}
  \DisplayProof}
\end{center}

\subsubsection*{Introduction rules}

\begin{center}
{\small
  \AxiomC{$\judge{x:A}{f(x):B(x)}$}
  \UnaryInfC{$\lambda_{x:A}f(x):\prod_{x:A}B(x)$}
  \DisplayProof
  \hspace{.5cm}
  \AxiomC{$a:A$}
  \AxiomC{$b:B(a)$}
  \BinaryInfC{$\pair(a,b):\sum_{x:A}B(x)$}
  \DisplayProof
  \hspace{.5cm}
  \AxiomC{$a:A$}
  \UnaryInfC{$r(a):\idn{A}{}(a,a)$}
  \DisplayProof
  \hspace{.5cm}
  \AxiomC{}
  \UnaryInfC{$\zero:\nat$}
  \DisplayProof
  \hspace{.5cm}
  \AxiomC{$n:\nat$}
  \UnaryInfC{$\successor(n):\nat$}
  \DisplayProof}
\end{center}

\subsubsection*{Elimination rules}

\begin{center}
{\small
  \AxiomC{$f:\prod_{x:A}B(x)$}
  \AxiomC{$a:A$}
  \BinaryInfC{$\app(f,a):B(a)$}
  \DisplayProof
  \hspace{1cm}
  \AxiomC{$\judge{}{p:\sum_{x:A}B(x)}$}
  \AxiomC{$\judge{x:A,y:B(x)}{\psi(x,y):C\bigl(\pair(x,y)\bigr)}$}
  \BinaryInfC{$\RSigma\bigl([x:A,y:B(x)]\psi(x,y),p\bigr):C(p)$}
  \DisplayProof}
\end{center}
\vspace{.25cm}
\begin{center}
{\small
  \AxiomC{$\judge{x:A,y:A,z:\idn{A}{}(x,y)}{B(x,y,z)}$}
  \noLine
  \UnaryInfC{$\judge{x:A}{\varphi(x):B\bigl(x,x,r(x)\bigr)}$}
  \noLine
  \UnaryInfC{$f:\idn{A}{}(a,b)$}
  \UnaryInfC{$\Jelim{}\bigl(\varphi,a,b,f):B(a,b,f)$}
  \DisplayProof
  \hspace{1.5cm}
  \AxiomC{$n:\nat$}
  \AxiomC{$b:B(\zero)$}
  \AxiomC{$\judge{x:\nat,y:B(x)}{g(x,y):B\bigl(\successor(x)\bigr)}$}
  \TrinaryInfC{$\rec\bigl(b,g,n\bigr):B(n)$}
  \DisplayProof}
\end{center}

\subsubsection*{Conversion rules}

\begin{center}
  {\small
  \AxiomC{$\lambda_{x:A}f(x):\prod_{x:A}B(x)$}
  \AxiomC{$a:A$}
  \BinaryInfC{$\app\bigl(\lambda_{x:A}f(x),a\bigr) \;=\; f(a):B(a)$}
  \DisplayProof
  \hspace{.5cm}
  \AxiomC{$a:A$}
  \AxiomC{$b:B(a)$}
  \AxiomC{$\judge{x:A,y:B(x)}{\psi(x,y):C\bigl(\pair(x,y)\bigr)}$}
  \TrinaryInfC{$\RSigma\bigl(\psi,\pair(a,b)\bigr) \;=\; \psi(a,b):C\bigl(\pair(a,b)\bigr)$}
  \DisplayProof
}
\end{center}
\vspace{.25cm}
\begin{center}
  {\small
  \AxiomC{$a:A$}
  \UnaryInfC{$\Jelim{}\bigl(\varphi,a,a,r(a)\bigr)\;=\;\varphi(a):B\bigl(a,a,r(a)\bigr)$}
  \DisplayProof
  \hspace{.5cm}
  \AxiomC{}
  \UnaryInfC{$\rec\bigl(b,g,\zero\bigr) = b :B(\zero)$}
  \DisplayProof
  \hspace{.5cm}
  \AxiomC{$n:\nat$}
  \UnaryInfC{$\rec\bigl(b,g,\successor(n)\bigr) = g\bigl(n,\rec\bigl(b,g,n\bigr)\bigr) :B\bigl(\successor(n)\bigr)$}}
\end{center}
\end{landscape}

\section{Syntactic constructions}\label{app:synt}

\subsection{Parameterized $\Jelim{}$-terms}\label{app:par}

We describe a parameterized version of the elimination rule for identity types. Consider a context
$\Delta$\/ of the form
\[ \Delta = \bigl( x,y:A,z:\idn{A}{}(x,y), v_1:B_1(x,y,z), \ldots, v_n:B_n(x,y,z,\vec{v})\bigr).\]

Then the following rule gives the parameterized terms $\Jelim{}(\varphi,a,b,f,\vec{v})$:

\begin{prooftree}
  \AxiomC{$\judge{\Delta}{T(x,y,z,\vec{v})}$}
  \noLine
  \UnaryInfC{$\judge{\Delta[x/y,\rr(x)/z]}{\varphi(x,\vec{v}):T\bigl(x,x,r(x),\vec{v}\bigr)}$}
  \noLine
  \UnaryInfC{$f:\idn{A}{}(a,b)$}
  \RightLabel{$\id{}$ elimination}
  \UnaryInfC{$\judge{\Delta[a/x,b/y,f/z]}{\Jelim{}([x:A]\varphi,a,b,f,\vec{v}):T(a,b,f,\vec{v})}$}
\end{prooftree}

The construction is by induction on the length of the list of parameters $\vec{v}$. When $n=0$\/
we simply set $\Jelim{}(\varphi,a,b,f,())=\Jelim{}(\varphi,a,b,f)$. Next, assume we have constructed the
terms
$\Jelim{}(\varphi,a,b,f,v_1,\ldots,v_m)$\/ for lists of parameters of length $m<n$. 
Assume we are given a judgement of the form
\[ \judge{\Delta[x/y,\rr(x)/z]}{\varphi(x,v_1,\ldots,v_n):T(x,x,\rr(x),v_1,\ldots,v_n)}. \]
We may then form the term
\[ \lambda_{v_n:B_n(x,x,\rr(x),\vec{v})}\varphi(x,\vec{v}): 
\prod_{v_n:B_n(x,x,\rr(x),\vec{v})} T(x,x,\rr(x),\vec{v}) \]
in context
\[ \bigl( x:A, v_1:B_1(x,x,\rr(x)),\ldots, v_{n-1}:B_{n-1}(x,x,\rr(x),\vec{v})\bigr). \]
Thus by induction hypothesis we have the parameterized term

\begin{prooftree}
  \AxiomC{$\judge{x,y:A,z:\idn{A}{}(x,y), v_1:B_1(x,y,z), \ldots, v_{n-1}:B_{n-1}(x,y,z,\vec{v})}{\prod_{v_n}T(x,y,z,\vec{v})}$}
  \noLine
  \UnaryInfC{$\judge{\bigl( x:A, v_1:B_1(x,x,\rr(x)),\ldots, v_{n-1}:B_{n-1}(x,x,\rr(x),\vec{v})\bigr)}{\lambda_{v_n}\varphi(x,\vec{v}):\prod_{v_n}T(x,x,r(x),\vec{v})}$}
  \noLine
  \UnaryInfC{$f:\idn{A}{}(a,b)$}
    \UnaryInfC{$\judge{v_1:B_1(a,b,f), \ldots, v_{n-1}:B_{n-1}(a,b,f,\vec{v})}
{\Jelim{}(\lambda_{v_n}\varphi(x,\vec{v}),a,b,f,\vec{v}):\prod_{v_n}T(a,b,f,\vec{v})}$}
\end{prooftree}
Hence we may apply this term to $v_n$\/ to obtain the desired term:
\[ \Jelim{}(\varphi,a,b,f,\vec{v}) =_{\textrm{def}} 
\app\bigl(\Jelim{}(\lambda_{v_n}\varphi(x,\vec{v}),a,b,f,\vec{v}),v_n\bigr).\]

It is readily verified that these terms satisfy the conversion rule

\begin{prooftree}
 \AxiomC{$\judge{}{a:A}$}
\UnaryInfC{$\judge{\Delta[a/x,a/y,\rr(a)/z]}{\Jelim{}(\varphi,a,a,\rr(a),\vec{v}) = \varphi(a,\vec{v}):
T(a,a,\rr(a),\vec{v})}  $}
\end{prooftree}

\subsection{Sequential $\Jelim{}$-terms}\label{app:seq}

We now turn to the construction of sequential $\Jelim{}$-terms, which arise in the setting of 
repeated Grothendieck constructions. We will actually need a parameterized version of these 
as well. That is, given a context 
\[ \Gamma=\bigl(x_1:A_1, \ldots x_n:A_n(x_1, \ldots, x_{n-1})\bigr)\]
we consider an extended context
\[ \Delta = \bigl(x_1:A_1, \ldots x_n:A_n(x_1, \ldots, x_{n-1}), \vec{v} \bigr)\]
where the variables $v_n$\/ are considered as parameters. Then we have the associated context
\[ \tilde{\Delta} =_{\textrm{def}} \tilde{\Gamma}, \vec{v}. \]
We wish to establish the following derived rule:

\begin{prooftree}
\AxiomC{$\judge{\tilde{\Delta}}{T}$}
\noLine
\UnaryInfC{$\judge{\Delta}{\varphi:T[\mylist{x}/\mylist{y},\rr(\mylist{x})/\mylist{z},\vec{v}]}  $}
\UnaryInfC{$\judge{\tilde{\Delta}}{\Jelim{}^\sigma([\mylist{x}]\varphi,\mylist{x},\mylist{y},
\mylist{z},\vec{v}):T(\mylist{x},\mylist{y},\mylist{z},\vec{v}) }  $}
\end{prooftree}

This is done by induction on the length of the context $\Gamma$. When $\Gamma=(x_1:A_1)$,
the problem reduces to the construction of the parameterized term
$\Jelim{}([x]\varphi,x,y,z,\vec{v})$, which was done in the previous section.

Thus assume that we have constructed the parameterized sequential terms for contexts
of length $n-1$. First consider the context

\begin{eqnarray*}
\Delta' & =_{\textrm{def}} & \Delta[x_1/y_1, \rr(x_1)/z_1, \ldots, x_{n-1}/y_{n-1}, \rr(x_{n-1})/z_{n-1}] \\
 & = & \bigl(x_1:A_1,\ldots, x_{n-1}:A_{n-1}, x_n,y_n:A_n, z_n:\idn{A_n}{}(x_n,y_n), \vec{v} 
\bigr)
\end{eqnarray*} 
Then we have
\[ \judge{\Delta'}{T(x_1,x_1,\rr(x_1), \ldots, x_{n-1},x_{n-1},\rr(x_{n-1}),x_n,y_n,z_n,\vec{v})}. \]
Noting that $\Delta'[x_n/y_n,\rr(x_n)/z_n]=\Delta$, we may thus use the ordinary parameterized
elimination rule to form the judgement

\begin{prooftree}
\AxiomC{$\judge{\Delta'}{T(x_1,x_1,\rr(x_1), \ldots, x_{n-1},x_{n-1},\rr(x_{n-1}),x_n,y_n,z_n,\vec{v})}$}
\noLine
\UnaryInfC{$\judge{\Delta}{\varphi:T[\mylist{x}/\mylist{y},\rr(\mylist{x})/\mylist{z},\vec{v}]}  $}
\UnaryInfC{$\judge{\Delta'}{\Jelim{}([x_n]\varphi,x_n,y_n,z_n,\vec{v}):
T((x_1,x_1,\rr(x_1), \ldots, x_{n-1},x_{n-1},\rr(x_{n-1}),x_n,y_n,z_n,\vec{v})) }  $}
\end{prooftree}

We can now apply the induction hypothesis to this term (where the parameters are now 
$x_n,y_n,z_n,\vec{v}$).

\section{Realizability clauses}\label{app:real}

Here we collect for easy reference the realizability clauses extracted
from the interpretation of type theory described above.

\subsection{Structural rules}

\subsubsection*{Weakening}

Assume we have judgements $\judge{\Gamma}{A}$ and
$\judge{\Delta}{}$.  In terms of realizers, given
$(\mylist{c},\mylist{\gamma},\mylist{d},\mylist{\delta})$ and
object of $\lscott\Gamma,\Delta\rscott$ and a term $a:A(\mylist{c})$ we have
\begin{align*}
  \alpha\forces_{\mylist{\gamma},\mylist{\delta}}a:A(\mylist{c})
  \quad\text{ iff
  }\quad\alpha\forces_{\mylist{\gamma}}a:A(\mylist{c}).
\end{align*}
For reindexing, given an arrow
$(\mylist{h},\mylist{k},m):(\mylist{c},\mylist{\gamma},\mylist{d},\mylist{\delta},a)\to(\mylist{c}',\mylist{\gamma}',\mylist{d}',\mylist{\delta}',a')$
in $\|\judge{\Gamma,\Delta}{A}\|$, we have\footnote{Explicitly, we
  have, in the notation of Section \ref{sec:synt}, that 
  \begin{align*}
    \alpha\cdot(\mylist{h},\mylist{k},m) &
    = \alpha\cdot\bigl(\mylist{h},m\circ(a\dag(\mylist{h},\mylist{k}))\bigr).
  \end{align*}
}
\begin{align*}
  \alpha\cdot(\mylist{h},\mylist{k},m) &
  = \alpha\cdot\hat{\tau}_{\Gamma;\Delta;A}(\mylist{h},\mylist{k},m).
\end{align*}
For terms, given $\judge{\Gamma}{a:A}$, we have 
\begin{align*}
  a[\mylist{\gamma},\mylist{\delta}] & = a[\mylist{\gamma}].
\end{align*}

\subsubsection*{Substitution}

Given 
\begin{prooftree}
  \AxiomC{$\judge{\Gamma}{a:A}$}
  \AxiomC{$\judge{\Gamma,x:A,\Delta}{B(x)}$}
  \BinaryInfC{$\judge{\Gamma,\Delta[a/x]}{B(a)}$}
\end{prooftree}
we have 
\begin{align*}
  \beta\forces_{\mylist{\gamma},\mylist{\delta}}b:B(\mylist{c},a(\mylist{c}),\mylist{d})
  \quad \text{ iff }\quad\beta\forces_{\mylist{\gamma},a[\mylist{\gamma}],\mylist{\delta}}b:B(\mylist{c},a(\mylist{c}),\mylist{d})
\end{align*}
and 
\begin{align*}
  \beta\cdot(\mylist{h},\mylist{k},m) &
  = \beta\cdot\hat{\sigma}_{\Gamma;a;\Delta}(\mylist{h},\mylist{k},m)
\end{align*}
for $(\mylist{h},\mylist{k},m):(\mylist{c},\mylist{\gamma},\mylist{d},\mylist{\delta},b)\to(\mylist{c}',\mylist{\gamma}',\mylist{d}',\mylist{\delta}',b')$.

For terms, given 
\begin{prooftree}
  \AxiomC{$\judge{\Gamma}{a:A}$}
  \AxiomC{$\judge{\Gamma,x:A,\Delta}{b(x):B(x)}$}
  \BinaryInfC{$\judge{\Gamma,\Delta[a/x]}{b(a):B(a)}$}
\end{prooftree}
we have 
\begin{align*}
  b(a)[\mylist{\gamma},\mylist{\delta}] & = b[\mylist{\gamma},a[\mylist{\gamma}],\mylist{\delta}].
\end{align*}

\subsection*{Dependent products}

Given $\judge{\Gamma}{\prod_{x:A}B(x)}$,
\begin{align*}
  \phi\forces_{\mylist{\gamma}} f:\prod_{x:A(\mylist{c})}B(\mylist{c},x)
\end{align*}
if and only if $\phi$ is an operation
\begin{align*}
  \bigl(\alpha\forces_{\mylist{\gamma}}a:A(\mylist{c})\bigr)\quad\longmapsto\quad\bigl(\phi\{\alpha\}\forces_{\mylist{\gamma},\alpha}\app(f,a):B(\mylist{c},a)\bigr)
\end{align*}
such that
\begin{align*}
  \phi\{\alpha\}\cdot\bigl(1_{\mylist{c}},m,\app(f,x)|(1_{\mylist{c}},m)\bigr)
  & = \phi\{\alpha\cdot(1_{\mylist{c}},m)\}
\end{align*}
for $m:a\to a'$ in $|A(\mylist{c})|$.

\subsection*{Dependent sums}

Given $\judge{\Gamma}{\sum_{x:A}B(x)}$,
\begin{align*}
  \upsilon\forces_{\mylist{\gamma}}p:\sum_{x:A(\mylist{c})}B(\mylist{c},x)
\end{align*}
if and only if $\upsilon=(\upsilon_{0},\upsilon_{1})$ with
\begin{align*}
    \upsilon_{0}&\forces_{\mylist{\gamma}}\pi_{0}(p):A(\mylist{c}),\text{
    and}\\
  \upsilon_{1}&\forces_{\mylist{\gamma},\upsilon_{0}}\pi_{1}(p):B(\mylist{c},\pi_{0}(p)).
\end{align*}

\subsection*{Identity types}

Given $\judge{\Gamma,x:A,y:A}{\idn{A}{}(x,y)}$,
\begin{align*}
  \forces_{\mylist{\gamma},\alpha,\beta}f:\idn{A(\mylist{c})}{}(a,b)
  \quad\text{ iff }\quad\alpha\cdot(1_{\mylist{c}},f)=\beta.
\end{align*}

\subsection*{Natural numbers}

Where $\#(t):\nat$ denotes the numeral associated, under the groupoid
interpretation built over the free groupoid, to the closed term
$t:\nat$, we have
\begin{align*}
  \tau\forces t:\nat \quad\text{ iff }\quad\tau:\idn{\nat}{}(t,\# t).
\end{align*}

\newcommand{\SortNoop}[1]{}
\providecommand{\bysame}{\leavevmode\hbox to3em{\hrulefill}\thinspace}
\providecommand{\MR}{\relax\ifhmode\unskip\space\fi MR }
% \MRhref is called by the amsart/book/proc definition of \MR.
\providecommand{\MRhref}[2]{%
  \href{http://www.ams.org/mathscinet-getitem?mr=#1}{#2}
}
\providecommand{\href}[2]{#2}

\end{document}